\documentclass[letterpaper,10pt,oneside,reqno]{article}

\usepackage[sorting=nyt,style=alphabetic,backend=bibtex,hyperref=true,doi=false,maxbibnames=9,maxcitenames=4,eprint=false]{biblatex}
\makeatletter
\def\blx@maxline{77}
\makeatother
\addbibresource{bib.bib}
\sloppy

\usepackage{amsmath,amssymb,amsthm,amsfonts}
\usepackage{graphicx,color}
\usepackage{upgreek}
\usepackage[mathscr]{euscript}

\allowdisplaybreaks
\numberwithin{equation}{section}

\usepackage{tikz}
\usetikzlibrary{shapes,arrows,positioning,decorations.markings}

\usepackage{array}
\usepackage{adjustbox}
\usepackage[colorlinks=true,linkcolor=blue,citecolor=red]{hyperref}
\usepackage{cleveref}
\usepackage{enumerate}

\usepackage[DIV=12]{typearea}

\synctex=1

\newcommand{\hnu}{\boldsymbol\upnu}
\newcommand{\hmu}{\boldsymbol\upmu}
\newcommand{\eps}{\epsilon}

\newtheorem{proposition}{Proposition}[section]
\newtheorem{lemma}[proposition]{Lemma}

\newtheorem{theorem}[proposition]{Theorem}
\newtheorem{conjecture}[proposition]{Conjecture}
\theoremstyle{definition}
\newtheorem{definition}[proposition]{Definition}
\newtheorem{remark}[proposition]{Remark}

\begin{document}
\title{The $q$-Hahn PushTASEP}
\author{Ivan Corwin, Konstantin Matveev, and Leonid Petrov}

\date{}

\maketitle

\begin{abstract}
	We introduce the $q$-Hahn PushTASEP --- an integrable stochastic interacting particle system which is a 3-parameter generalization of the PushTASEP, a well-known close relative of the TASEP (Totally Asymmetric Simple Exclusion Process). The transition probabilities in the $q$-Hahn PushTASEP are expressed through the $_4\phi_3$ basic hypergeometric function. Under suitable limits, the $q$-Hahn PushTASEP degenerates to all known integrable (1+1)-dimensional stochastic systems with a pushing mechanism. One can thus view our new system as a pushing counterpart of the $q$-Hahn TASEP introduced by Povolotsky \cite{Povolotsky2013}. We establish Markov duality relations and contour integral formulas for the $q$-Hahn PushTASEP. 

	In a $q\to 1$ limit of our process we arrive at a random recursion which, in a special case, appears to be similar to the inverse-Beta polymer model. However, unlike in recursions for Beta polymer models, the weights (i.e., the coefficients of the recursion) in our model depend on the previous values of the partition function in a nontrivial manner.
\end{abstract}

\section{Introduction}
\label{sec:intro}

Integrable probability is a field which seeks to discover and analyze probabilistic systems
enjoying significant algebraic structure (e.g. Markov dualities and Bethe ansatz diagonalizability) and which demonstrate universal asymptotic
fluctuation behaviors.
There are two main routes which have proved effective so far
in producing integrable probabilistic systems
--- Macdonald processes (and various degenerations)
\cite{BorodinCorwin2011Macdonald}, \cite{BorodinGorinSPB12}, \cite{BorodinPetrov2013Lect}, and
stochastic vertex models
\cite{Borodin2014vertex}, \cite{CorwinPetrov2015}, \cite{BorodinPetrov2016_Hom_Lectures}.

Prototypical examples within both classes of integrable models
are the TASEP (Totally Asymmeric Simple Exclusion Process)
\cite{Spitzer1970}
and the PushTASEP, its counterpart with a long-range pushing mechanism
\cite{liggett1980long}, \cite{derrida1991dynamics}.
In the context of \emph{Macdonald processes}, both systems are connected to certain
probability distributions on integer partitions
$\lambda=(\lambda_1\ge\ldots\ge\lambda_N\ge0 )$
with nice algebraic structure,
and the TASEP and PushTASEP are related to the distributions of,
respectively, the smallest
and the largest
parts of $\lambda$.
This picture can be generalized within the Macdonald process hierarchy to produce 2-parameter $q$-deformed discrete and continuous time TASEPs and PushTASEPs
\cite{BorodinCorwin2013discrete},
\cite{BorodinPetrov2013NN},
\cite{CorwinPetrov2013},
\cite{MatveevPetrov2014}.
On the other hand,
TASEP also belongs to the class
of exactly solvable
\emph{stochastic vertex models},
and hence it admits a 3-parameter generalization
to the $q$-Hahn TASEP
\cite{Povolotsky2013},
\cite{Corwin2014qmunu}
(recalled in \Cref{sec:TASEP}).
The latter is, in fact, a special case of the 4-parameter family of
stochastic higher spin six vertex models studied in
\cite{CorwinPetrov2015}, \cite{BorodinPetrov2016_Hom_Lectures}.

For some time it was not clear how to extend
the PushTASEP analogously outside the Macdonald hierarchy. In this paper we achieve this.
That is, we introduce a 3-parameter $q$-Hahn generalization of the
PushTASEP (\Cref{sub:def_nonnegativity}), adding a new parameter
to the $q$-PushTASEP with $q$-geometric jumps
discovered in \cite{MatveevPetrov2014}.
This is akin to the $q$-Hahn extension
of the discrete time
$q$-TASEP \cite{BorodinCorwin2013discrete}
found in
\cite{Povolotsky2013} and studied in \cite{Corwin2014qmunu}.
Our pushing system enjoys a variant of Markov duality
(see \Cref{thm:push_duality})
similar to the one studied earlier for the continuous time $q$-PushTASEP
\cite{CorwinPetrov2013},
and
generalizes the
contour integral formulas
from that setting, too (see \Cref{thm:push_q_moments_in_the_text}).
The system itself is not so simple (in particular, its transition probabilities
are expressed through the basic hypergeometric function $_4\phi_3$), and
our search for it was informed by the desire to extend the duality and
contour integral formulas away from the Macdonald context.
Even once this generalized system is introduced (see \Cref{sub:def_nonnegativity}), it
is not so simple to prove the duality, as it requires some interesting
$q$-identities (the central statement is \Cref{lemma:main_identity} which we prove in
\Cref{sub:lemma_proof}).

The $q$-Hahn TASEP revealed some interesting new systems as its limits --- in
particular, the beta polymer (or random walk in random environment) considered in
\cite{CorwinBarraquand2015Beta} which arises in a scaling limit of the system as $q\to1$.
We parallel this limit here, and discover a corresponding beta polymer-like system.
The main new feature is that in our
model, the distribution of the weights depends on the ratio of the
partition functions immediately to the left and below
(see \Cref{def:Z_process}).
We had initially expected that the inverse beta polymer of
\cite{thieryLD2015integrable} would arise from our pushing system, but
presently this does not seem to be the case
(though perhaps the inverse beta polymer could be included
into a 4-parameter family of pushing systems whose
existence we speculate on below). However, in the $q\to 1$ limit we do arrive (see \Cref{lem:Ztilde} and \Cref{thm:beta_convergence}) at a solution to the following recursion relation, which bears great similarity to that satisfied by the inverse beta polymer: When $\tilde{Z}(i,t-1)>\tilde{Z}(i-1,t)$,
$$
\tilde{Z}(i,t) = \tilde{Y} \tilde{Z}(i,t-1) + (1-\tilde{Y}) \tilde{Z}(i-1,t)
$$
where $\tilde{Y}$ is $\mathscr{Beta}^{-1}(\bar{\mu},\tfrac{1}{2}-\bar{\mu})$-distributed (see \Cref{sub:2F1_distributions}); and when
$\tilde{Z}(i,t-1)<\tilde{Z}(i-1,t)$,
$$
\tilde{Z}(i,t) = \tilde{Y} \tilde{Z}(i-1,t) + (1-\tilde{Y}) \tilde{Z}(i,t-1)
$$
where $\tilde{Y}$ is $\mathscr{Beta}^{-1}(\bar{\mu},\tfrac{1}{2})$-distributed. For this random recursion (with suitable boundary data) we compute moment formulas (\Cref{prop:beta_moments}) and conjecture a Laplace transform formula (\Cref{conj:conjecture_push_Fredholm_beta}).

Besides the $q\to 1$ limits, there are other new limit processes which arise from our work. In particular, for $q=0$ we find a geometric-Bernoulli generalization of the PushTASEP (see \Cref{sub:gB_push}).

We do not perform any asymptotics in this paper. In fact, the key result towards such an aim would be a Fredholm determinant formula for a suitable $q$-Laplace transform. However, due to the pushing mechanism, the moments generally used to derive such a formula become infinite past a certain power. Still, we present \Cref{conj:conjecture_push_Fredholm} which contains what we believe is a correct formula based on previous analogous works.
\medskip

Our present investigation suggests that there should be strong parallels
between pushing and non-pushing integrable particle systems.
In the non-pushing (e.g. standard TASEP) context,
\cite{CorwinPetrov2015}
introduced a 4-parameter family of stochastic vertex
models,
which recovers the 3-parameter $q$-Hahn TASEP
in a special analytic continuation and degeneration.
It would be
interesting to develop a parallel 4-parameter family of pushing systems,
and obtain the corresponding duality relations and contour integral formulas.
We expect that this can be done in the following manner. Consider the stochastic higher spin vertex model with horizontal spin $J\in \mathbb{Z}_{\ge1}$.
Subtract $J$ from the number of  arrows along  each horizontal edge,
and interpret the
negative arrow numbers as counting arrows pointing in the opposite direction.
This minor modification introduces a pushing mechanism.
It seems likely that duality and contour integral formulas
can be carried over from the original stochastic higher spin model.
On the other hand, to recover the $q$-Hahn PushTASEP one must find the
right analytic continuation.
We leave this for a later investigation.

We also note that it should be possible to produce a 4-parameter family of pushing
systems by a different mechanism --- via bijectivisation of the Yang-Baxter equation (see \cite{BufetovPetrovYB2017})
related to the spin $q$-Whittaker polynomials (introduced in \cite{BorodinWheelerSpinq}).
This approach is developed in the upcoming work
\cite{BufetovMucciconiPetrov2018}, and we anticipate that the same 4-parameter family will come up in this manner.

\subsection*{Outline}

In \Cref{sec:TASEP} we recall
duality and contour integral formulas for the $q$-moments of
the $q$-Hahn TASEP
\cite{Povolotsky2013}, \cite{Corwin2014qmunu}, as some of these ingredients are
used for the $q$-Hahn PushTASEP.
In \Cref{sec:q_hahn_push} we introduce the $q$-Hahn PushTASEP,
discuss its various degenerations, and prove duality and contour integral formulas.
In \Cref{sec:beta_limit} we consider a beta polymer-like limit
of the $q$-Hahn PushTASEP as $q\to1$, and write down moments of the resulting system
in a contour integral form. We also provide a conjecture for the $q$-Laplace transform.
Formulas pertaining to $q$-hypergeometric functions
and associated probability distributions on $\mathbb{Z}$
are summarized in
\Cref{sec:q_hyp}.

\subsection*{Acknowledgments}

The authors thank Guillaume Barraquand for pointing out the simplification which led to \Cref{sec:cov}. 
The authors acknowledge that part of this work was done while in attendance of the ICERM conference on Limit Shapes, held from April 13-17, 2015.
IC was partially supported by the NSF grants DMS-1208998, DMS-1811143 and DMS-1664650, as well as a Packard Fellowship in Science and Engineering and a Clay Research Fellowship.
LP was partially supported by the NSF grant DMS-1664617.

\section{$q$-Hahn TASEP}
\label{sec:TASEP}

Here we briefly recall the definition and duality properties
of the $q$-Hahn TASEP introduced and studied in
\cite{Povolotsky2013}, \cite{Corwin2014qmunu}.

Assume that the parameters $0<q<1$, $0\le \hnu\le \hmu<1$
are fixed.\footnote{Note that we are using a different font for the
$q$-Hahn TASEP parameters $(\hmu,\hnu)$
to distinguish them from the parameters $(\mu,\nu)$ of the
$q$-Hahn PushTASEP.}
The $q$-Hahn TASEP is a discrete time (with $t\in \mathbb{Z}_{\ge0}$) Markov
process on configurations
$\vec x(t)=(x_1(t)>x_2(t)>\ldots )$, $x_i\in\mathbb{Z}$, with at most one particle per site and a rightmost particle $x_1$.
The evolution of the $q$-Hahn TASEP is as follows.
At each discrete time step $t\to t+1$, each particle $x_i(t)$
jumps in parallel and independently to
$x_i(t+1)=x_{i}(t)+v_i$, where
$v_i$ is sampled from the probability distribution
$\varphi_{q,\hmu,\hnu}(v_i\mid x_{i-1}(t)-x_i(t)-1)$. Note for the update of $x_1$, we assume a virtual particle $x_0\equiv +\infty$, by agreement.
Here
$\varphi_{q,\hmu,\hnu}$
is the $q$-beta-binomial distribution
defined by \eqref{phi_qmunu_definition}.
See
\Cref{fig:TASEP} for an illustration.
\begin{figure}[htpb]
	\centering
	\includegraphics{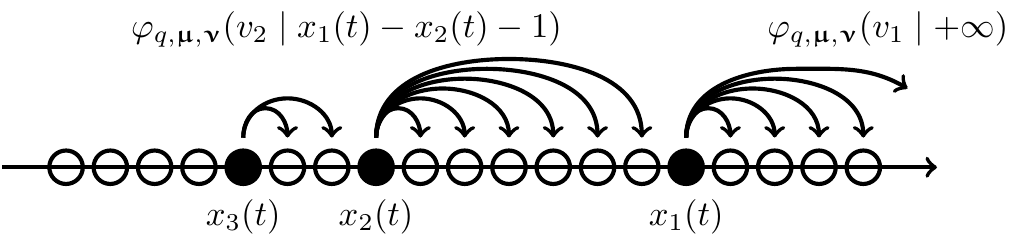}
	\caption{$q$-Hahn TASEP and the update probabilities
	$\varphi_{q,\hmu,\hnu}$ for $x_1$ and $x_2$.}
	\label{fig:TASEP}
\end{figure}

We now proceed to describe a Markov duality relation between
the $q$-Hahn TASEP one-step transition operator
and  the $q$-Hahn Boson operator on a different space.
Fix $N\ge1$ and define the $N$-particle space
\begin{equation}\label{space_X_N}
	\mathbb{X}^N:=
	\{\vec x:=(x_0,x_1,\ldots,x_N ), \ x_i\in \mathbb{Z},\
	+\infty=x_0>x_1>\ldots>x_N \}.
\end{equation}
By $\mathsf{P}^{\textnormal{TASEP}}_{q,\hmu,\hnu}$ denote
the $q$-Hahn TASEP Markov transition operator
acting on functions on $\mathbb{X}^N$.
Note that by the very definition of the $q$-Hahn TASEP,
the evolution of its $N$ rightmost particles is
independent from the rest of the process.

Also define the
Boson particle spaces
\begin{equation}\label{space_Y_N_y_Nk}
	\mathbb{Y}^N:=
	\{\vec y=(y_0,y_1,\ldots,y_N )\in\mathbb{Z}_{\ge0}^{N+1}\}
	,
	\qquad
	\mathbb{Y}^N_k:=
	\{\vec y\in \mathbb{Y}^{N}\colon y_0+y_1+\ldots+y_N=k \}.
\end{equation}
Let us define the
$q$-Hahn Boson operator $\mathsf{P}^{\textnormal{Boson}}_{q,\alpha,\hnu}$
acting on functions on $\mathbb{Y}^{N}$,
where the parameter $\alpha$ is arbitrary.
First, let $[\mathcal{A}_{q,\alpha,\hnu}]_i$, $i=1,\ldots,N $,
be a local operator acting only on the $y_i$ and $y_{i-1}$ coordinates of functions $f$  as
\begin{equation*}
	[\mathcal{A}_{q,\alpha,\hnu}]_i
	f(\vec y)
	=
	\sum_{s_i=0}^{y_i}
	\varphi_{q,\alpha,\hnu}(s_i\mid y_i)\,
	f(y_0,y_1,\ldots,y_{i-1}+s_i,y_i-s_i,\ldots,y_N).
\end{equation*}
Define the $q$-Hahn Boson  operator by its action
\begin{equation*}
	\mathsf{P}^{\textnormal{Boson}}_{q,\alpha,\hnu}
	f(\vec y)
	:=
	[\mathcal{A}_{q,\alpha,\hnu}]_N
	\ldots
	[\mathcal{A}_{q,\alpha,\hnu}]_1
	f(\vec y).
\end{equation*}
The order of operators is important and corresponds
to parallel update: first move particles from location~1 to location~0, 
then from location~2 to location~1 (note that the distribution of the particles
taken from location~2 is no affected by what happened at location~1),
and so on.
No particles are moved from the location $0$, and no
particles are added to the location $N$.
This implies that
$\mathsf{P}^{\textnormal{Boson}}_{q,\alpha,\hnu}$
preserves each of the spaces $\mathbb{Y}^{N}_k$.

\begin{remark}
	We put no restrictions on the parameter
	$\alpha$ in
	$\mathsf{P}^{\textnormal{Boson}}_{q,\alpha,\hnu}$.
	In particular, we do not require it to be a Markov transition operator: it is
	allowed to have negative matrix elements.
	However, the rows in
	$\mathsf{P}^{\textnormal{Boson}}_{q,\alpha,\hnu}$
	still sum to one,
	and for
	$\hnu\le \alpha<1$ the operator
	$\mathsf{P}^{\textnormal{Boson}}_{q,\alpha,\hnu}$
	defines a discrete time Markov
	process on $\mathbb{Y}^{N}$
	called the $q$-Hahn Boson system.
\end{remark}

The $q$-Hahn TASEP and $q$-Hahn Boson operators with the same
parameters $\alpha=\hmu$
are dual to each
other via the duality functional
$\mathfrak{H}\colon \mathbb{X}^{N}\times \mathbb{Y}^{N}\to\mathbb{R}$ defined as
\begin{equation}\label{H_duality_functional}
	\mathfrak{H}(\vec x,\vec y):=
	\begin{cases}
		\displaystyle\prod_{i=1}^{N}q^{y_i(x_i+i)},&y_0=0;
		\\[10pt]
		0,&y_0>0.
	\end{cases}
\end{equation}

\begin{theorem}[Duality for the $q$-Hahn TASEP \cite{Corwin2014qmunu}]
	\label{thm:TASEP_duality}
	We have
	\begin{equation*}
		\mathsf{P}^{\textnormal{TASEP}}_{q,\hmu,\hnu}\mathfrak{H}
		=
		\mathfrak{H}\bigl(\mathsf{P}^{\textnormal{Boson}}_{q,\hmu,\hnu}\bigr)^T,
	\end{equation*}
	where ``$T$'' stands for transpose of an operator.
	In other words,
	$\mathsf{P}^{\textnormal{TASEP}}_{q,\hmu,\hnu}$
	acts in the variables $\vec x$,
	and
	$\mathsf{P}^{\textnormal{Boson}}_{q,\hmu,\hnu}$
	acts in the variables
	$\vec y$, and their actions on
	$\mathfrak{H}(\vec x,\vec y)$
	coincide.
\end{theorem}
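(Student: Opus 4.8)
The plan is to verify the asserted operator identity pointwise, at an arbitrary pair $(\vec x,\vec y)\in\mathbb{X}^{N}\times\mathbb{Y}^{N}$. Unwinding the definitions, the left side is
\[
\bigl(\mathsf{P}^{\textnormal{TASEP}}_{q,\hmu,\hnu}\mathfrak{H}\bigr)(\vec x,\vec y)=\sum_{v_1,\dots,v_N\ge0}\ \prod_{i=1}^{N}\varphi_{q,\hmu,\hnu}\bigl(v_i\mid x_{i-1}-x_i-1\bigr)\,\mathfrak{H}(\vec x+\vec v,\vec y),
\]
and the right side is $\mathsf{P}^{\textnormal{Boson}}_{q,\hmu,\hnu}$ acting on $\mathfrak{H}(\vec x,\,\cdot\,)$ in the $\vec y$ variable. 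If $y_0>0$ both sides vanish identically --- on the left because $\mathfrak{H}(\,\cdot\,,\vec y)\equiv 0$, and on the right because each local operator $[\mathcal{A}_{q,\hmu,\hnu}]_i$ can only increase the occupation $y_0$ of site $0$ --- so I would reduce at once to the case $y_0=0$.

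The main computation is to expand both sides into products of one-variable sums. On the TASEP side, since every jump satisfies $v_i\le x_{i-1}-x_i-1$ the updated configuration automatically stays strictly decreasing, so the sum over $\vec v$ factorizes completely; combined with the homogeneity $\mathfrak{H}(\vec x+\vec v,\vec y)=\mathfrak{H}(\vec x,\vec y)\,q^{\sum_i y_iv_i}$ this yields $\mathfrak{H}(\vec x,\vec y)\prod_{i=1}^{N}G\bigl(x_{i-1}-x_i-1;y_i\bigr)$, where
\[
G(m;y):=\sum_{s=0}^{m}\varphi_{q,\hmu,\hnu}(s\mid m)\,q^{sy},
\]
and the $i=1$ term is the $m\to\infty$ limit of $G(m;y_1)$, which by the $q$-binomial theorem equals $(\hmu;q)_{y_1}/(\hnu;q)_{y_1}=\varphi_{q,\hmu,\hnu}(0\mid y_1)$. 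On the Boson side, I would evaluate $\mathsf{P}^{\textnormal{Boson}}_{q,\hmu,\hnu}\mathfrak{H}(\vec x,\,\cdot\,)$ by applying the local operators $[\mathcal{A}_{q,\hmu,\hnu}]_i$ in turn. In this computation the operator acting at site $0$ collapses to its $s=0$ term, since $\mathfrak{H}$ is supported on $\{y_0=0\}$, contributing $\varphi_{q,\hmu,\hnu}(0\mid y_1)$; each of the remaining operators leaves $y_0$ untouched and factors through $\mathfrak{H}$ via the shift identity $\mathfrak{H}(\dots,y_{i-1}+s_i,y_i-s_i,\dots)=q^{s_i(x_{i-1}-x_i-1)}\mathfrak{H}(\dots)$, contributing $G\bigl(y_i;x_{i-1}-x_i-1\bigr)$. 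The outcome is $\mathfrak{H}(\vec x,\vec y)\,\varphi_{q,\hmu,\hnu}(0\mid y_1)\prod_{i=2}^{N}G\bigl(y_i;x_{i-1}-x_i-1\bigr)$.

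Comparing the two expansions, the theorem reduces to the single $q$-identity $G(m;y)=G(y;m)$ for all $m,y\in\mathbb{Z}_{\ge 0}$ (the $i=1$ factors already matching by the limit above), and this I expect to be the main obstacle. I would prove it by passing to a terminating basic hypergeometric form: the elementary identities for $q$-binomial coefficients and for shifted $q$-Pochhammer symbols give
\[
G(m;y)=\frac{(\hmu;q)_m}{(\hnu;q)_m}\,{}_{2}\phi_{1}\bigl(q^{-m},\,\hnu/\hmu;\,\hmu^{-1}q^{1-m};\,q,\,q^{y+1}\bigr),
\]
and invariance of this expression under $m\leftrightarrow y$ is a transformation of terminating ${}_{2}\phi_{1}$ series in the $q$-Chu--Vandermonde family (the symmetry underlying the duality of the $q$-Hahn orthogonal polynomials). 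Granting this $q$-identity, the two sides of the theorem coincide at every $(\vec x,\vec y)$. As a variant one could induct on $N$, stripping off the rightmost particle $x_N$ together with the last local operator so as to reduce the $N$-particle duality to the $(N-1)$-particle case plus one use of the $G$-symmetry, thereby concentrating all of the $q$-series manipulation into a single lemma.
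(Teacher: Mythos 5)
Your proof is correct and follows the standard route (note the paper itself does not reprove \Cref{thm:TASEP_duality}, which is quoted from \cite{Corwin2014qmunu}): after disposing of the case $y_0>0$, both sides factorize over $i$, the first-particle factors agree by the $q$-binomial theorem, and the matching of the remaining factors is exactly the symmetry $\sum_{s=0}^{m}\varphi_{q,\hmu,\hnu}(s\mid m)\,q^{sy}=\sum_{t=0}^{y}\varphi_{q,\hmu,\hnu}(t\mid y)\,q^{tm}$, which is already recorded as \Cref{lemma:phi_symmetry}, i.e.\ \eqref{phi_qmunu_interchange}, so you may simply cite it rather than re-derive it from terminating ${}_{2}\phi_{1}$ transformations. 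Your Boson-side computation uses the parallel-update reading of $\mathsf{P}^{\textnormal{Boson}}_{q,\hmu,\hnu}$ (the amount removed from site $i$ is governed by the original $y_i$), which is indeed the intended interpretation, consistent with the explicit matrix elements written out in \eqref{push_duality_formulation_equation_detail}.
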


An immediate consequence of the duality is that the $q$-moments
$\mathop{\mathbb{E}}\bigl[\prod_{i=1}^{n}q^{y_i(x_i(t)+i)}\bigr]$
of the $q$-Hahn TASEP, as index by time $t$ and the vector $\vec{y}$ solves a difference equation involving the $q$-Hahn Boson operator. Using the Bethe ansatz solvability of this Boson operator allows one to write down explicit
contour integral formulas
for the $q$-moments, which become particularly nice when the $q$-Hahn TASEP is started from the step
\cite{Corwin2014qmunu}
or the
half-stationary
\cite{BCPS2014}
initial data.

Let us recall the formulas in the step case.
Encode elements of $\mathbb{Y}^N_k$ as
$\vec n=(n_1\ge \ldots\ge n_k)$, $N\ge n_1$, $n_k\ge0$,
where for all $m$ we have
$y_m=\#\{i\colon n_i=m\}$.\footnote{For example, $\vec y=(1,0,3,1,2)\in \mathbb{Y}^4_7$
corresponds to $\vec n=(4,4,3,2,2,2,0)$.}
\begin{theorem}[{\cite[Theorem 1.9]{Corwin2014qmunu}}]
	\label{thm:TASEP_moments}
	Fix $0<q<1$, $0\le \hnu\le\hmu<1$.
	For any $N,k\ge1$ and $\vec n$ as above, the
	$q$-moments of the $q$-Hahn PushTASEP with step initial
	data $x_i(0)=-i$, $i\ge1$, have the form
	\begin{multline*}
		\mathop{\mathbb{E}}
		\Bigl[
			\prod_{i=1}^{k}q^{x_{n_i}(t)+n_i}
		\Bigr]
		=\frac{(-1)^kq^{\frac{k(k-1)}{2}}}{(2\pi\sqrt{-1})^k}
		\oint \frac{dz_1}{z_1}\ldots
		\oint \frac{dz_k}{z_k}
		\prod_{1\le A<B\le k}\frac{z_A-z_B}{z_A-qz_B}
		\\\times\prod_{i=1}^{k}
		\Biggl[
			\left( \frac{1-\hnu z_j}{1-z_j} \right)^{n_j}
			\left( \frac{1-\hmu z_j}{1-\hnu z_j} \right)^t
			\frac{1}{1-\hnu z_j}
		\Biggr].
	\end{multline*}
	Here all the integration contours encircle $1$ but not $0$ or $\hnu^{-1}$, and for all $B>A$
	the $z_A$ contour encircles the $qz_B$ contour.
\end{theorem}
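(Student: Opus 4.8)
The plan is to leverage the Markov duality of \Cref{thm:TASEP_duality}. Write $m_t(\vec y):=\mathbb{E}_{\mathrm{step}}\bigl[\mathfrak H(\vec x(t),\vec y)\bigr]$ for the expectation of the duality functional under the step initial data $x_i(0)=-i$. Combining the Markov property of the $q$-Hahn TASEP with the identity $\mathsf P^{\mathrm{TASEP}}_{q,\hmu,\hnu}\mathfrak H=\mathfrak H(\mathsf P^{\mathrm{Boson}}_{q,\hmu,\hnu})^{T}$ yields the closed recursion $m_{t+1}=\mathsf P^{\mathrm{Boson}}_{q,\hmu,\hnu}\,m_t$, hence $m_t=(\mathsf P^{\mathrm{Boson}}_{q,\hmu,\hnu})^{t}m_0$. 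For $x_i(0)=-i$ each exponent $y_i(x_i(0)+i)$ vanishes, so $m_0(\vec y)=\mathbf 1[y_0=0]$; in the $\vec n$-encoding this is $m_0(\vec n)=\mathbf 1[n_k\ge1]$, and the sought quantity is exactly $m_t$ evaluated at $\vec n$. Two structural facts will be used below: site $0$ never loses particles under the Boson dynamics, so $m_t$, initially supported on $\{y_0=0\}$, stays so for all $t$ --- equivalently $m_t(\vec n)=0$ whenever $n_k=0$; and only finitely many coordinates are ever affected, so $m_t(\vec n)$ does not depend on $N$.

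The next step is to solve this recursion via the Bethe ansatz. Following the ``true evolution equation'' mechanism, one shows that on the subspace of functions $f(\vec n)$ obeying the appropriate two-body relations at coinciding coordinates, the operator $\mathsf P^{\mathrm{Boson}}_{q,\hmu,\hnu}$ coincides with an explicit one-body (``free'') operator $\mathfrak L=\prod_i\mathfrak L_i$; this reduction is the Bethe-ansatz solvability of the $q$-Hahn Boson operator due to Povolotsky, and it is the computational heart of the argument. The factor $\mathfrak L_i$ is diagonalized by the ``$q$-Hahn plane waves'': it has eigenfunction $n\mapsto\bigl(\tfrac{1-\hnu z}{1-z}\bigr)^{n}$ with eigenvalue $\tfrac{1-\hmu z}{1-\hnu z}$, which is precisely the shape of the asserted integrand. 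Consequently any superposition $\sum_{\vec z}c(\vec z)\prod_i\bigl(\tfrac{1-\hmu z_i}{1-\hnu z_i}\bigr)^{t}\bigl(\tfrac{1-\hnu z_i}{1-z_i}\bigr)^{n_i}$ over a Bethe-type family of contours automatically solves the free equation; the coefficient $c(\vec z)$ --- in particular the factor $\prod_{A<B}\frac{z_A-z_B}{z_A-qz_B}$ together with the nesting of contours --- is then chosen so that the superposition additionally satisfies the two-body relations, the left boundary $m_t|_{n_k=0}=0$, and the initial condition $m_0(\vec n)=\mathbf 1[n_k\ge1]$. As this list of constraints has a unique solution, it must equal $m_t$, which gives the formula.

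It then remains to check the proposed nested contour integral against each constraint. The free equation is immediate from linearity and the eigenrelation above. The two-body relation at $n_A=n_{A+1}$ follows from a residue computation: collapsing the $z_A$ and $z_{A+1}$ contours onto one another produces the residue at the pole of $\frac{z_A-z_{A+1}}{z_A-qz_{A+1}}$, and the identity that results is exactly the required two-body constraint; the prescription that the $z_A$-contour enclose the $qz_B$-contour for all $B>A$ is precisely what legitimizes these deformations. The left boundary condition holds because, when $n_k=0$, the innermost ($z_k$-) integrand $\frac1{z_k}\,\frac1{1-\hnu z_k}\bigl(\tfrac{1-\hmu z_k}{1-\hnu z_k}\bigr)^t\prod_{A<k}\frac{z_A-z_k}{z_A-qz_k}$ is holomorphic inside a contour enclosing $1$ but neither $0$ nor $\hnu^{-1}$, so that integral vanishes. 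The subtle point is the initial condition: at $t=0$ with all $n_i\ge1$ one evaluates the integral by taking, in turn, the residues at $z_k=1$, then $z_{k-1}=1$, and so on; the telescoping of these residues --- once the normalizing prefactor $(-1)^kq^{k(k-1)/2}$ is included and together with the additional vanishing forced by $\prod_{A<B}\frac{z_A-z_B}{z_A-qz_B}$ whenever two of the $n_i$ coincide --- produces exactly $1$, the $q$-Hahn analogue of a standard identity from the $q$-TASEP setting. I expect the main obstacles to be the free-operator reduction of $\mathsf P^{\mathrm{Boson}}_{q,\hmu,\hnu}$, which is a genuine $q$-series identity, and the combinatorial bookkeeping in the two-body/residue arguments --- in particular verifying that the characterizing system has a unique solution.
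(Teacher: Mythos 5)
Your proposal is correct and is essentially the standard argument: the paper does not prove this statement itself (it is quoted from \cite{Corwin2014qmunu}), but both the cited proof and the paper's own proof of the parallel result for the PushTASEP (\Cref{thm:push_q_moments_in_the_text}) proceed exactly as you describe --- duality reduces the moments to the Boson evolution $m_{t+1}=\mathsf P^{\textnormal{Boson}}_{q,\hmu,\hnu}m_t$ with $m_0(\vec n)=\mathbf 1[n_k\ge1]$, the Boson operator is replaced by a free operator on functions obeying two-body boundary relations, and the nested contour integral is checked against the free equation, the two-body and $n_k=0$ boundary conditions, and the initial condition, with uniqueness (here immediate, since the recursion is an explicit forward iteration on the finite set $\mathbb{Y}^N_k$) closing the argument.
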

The parameters $y_i$ are the labels of the $q$-moments.
In \Cref{sub:duality}
we prove a duality result for the $q$-Hahn PushTASEP,
and in
\Cref{sub:push_contour_integrals}
utilize the duality to obtain contour integral
formulas for the $q$-moments of this process.

\section{$q$-Hahn PushTASEP, duality, and contour integrals}
\label{sec:q_hahn_push}

\subsection{Definition and nonnegativity}
\label{sub:def_nonnegativity}

The $q$-Hahn PushTASEP depends on the main
``quantization'' parameter $q\in(0,1)$, and
on two parameters $\mu,\nu$ in the following range:
\begin{equation}
	\label{mu_nu_parameters_for_qHahn}
	0<\mu<1,\qquad -1<\nu\le \min\{\mu,\sqrt q\}.
\end{equation}
The $q$-Hahn PushTASEP is a discrete time Markov
process on particle configurations in $\mathbb{Z}$
(with at most one particle per site)
which have a rightmost particle:
\begin{equation*}
	\vec x(t)=(x_1(t)> x_2(t)> \ldots).
\end{equation*}
At each discrete time moment,
particles in the $q$-Hahn PushTASEP may jump to the left.
The update $\vec x(t)\to \vec x(t+1)$
is performed according to the following procedure
(the distributions $\varphi$ and $\psi$ are defined
	by \eqref{phi_qmunu_definition}
	and \eqref{q_hyp_distribution_definition},
	respectively):
	\begin{enumerate}[\bf1.\/]
	\item The first particle $x_1$ jumps to the left by $\ell\in \mathbb{Z}_{\ge0}$, where
		$\ell$ is drawn from the distribution $\varphi_{q,\mu,\nu}(\ell\mid \infty)$.
	\item Consecutively for $i=2,3,\ldots $, given the movement of the
		$(i-1)$-st particle $x_{i-1}(t)\to x_{i-1}(t+1)=x_{i-1}(t)-\ell$,
		and the gap $g=x_{i-1}(t)-x_{i}(t)-1$ before this movement, the
		location of the
		$i$-th particle is updated as
		$x_i(t)\to x_{i}(t+1)=x_{i}(t)-L$, $L\in \mathbb{Z}_{\ge0}$, with probability
		\begin{equation}
			\label{P_ell_g_update_probability_definition}
			\mathbf{P}_{\ell, g}(L):=
			\sum_{p=0}^{\min\{\ell, L\}}
			\varphi_{q^{-1}, q^{g}, \mu \nu q^{g-1}}(p \mid \ell)
			\,
			\psi_{q, \nu \mu^{-1} q^{p}, \nu q^{g}, \nu^{2} q^{g+p}}(L-p).
		\end{equation}
		For consistency of notation we will sometimes write
		$x_0=+\infty$ and $\mathbf{P}_{\ell,\infty}(L)=\varphi_{q,\mu,\nu}(L\mid \infty)$.
		See \Cref{fig:push_update_pic} for an illustration.
\end{enumerate}
\begin{figure}[htbp]
	\centering
	\includegraphics[width=.7\textwidth]{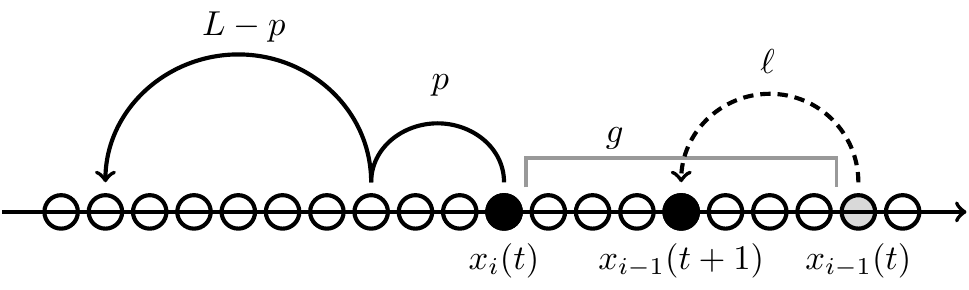}
	\caption{Update of the $i$-th particle given the movement of the $(i-1)$-st particle.}
	\label{fig:push_update_pic}
\end{figure}

Let us make a number of comments concerning this definition:
\begin{enumerate}[$\star$]
	\item It is not obvious that the right-hand side of \eqref{P_ell_g_update_probability_definition}
		is
		nonnegative because for $\nu>0$ the expression
		$\varphi_{q^{-1}, q^{g}, \mu \nu q^{g-1}}(p \mid \ell)$ might be negative.
		We prove the nonnegativity of the $\mathbf{P}_{\ell,g}$'s
		in
		\Cref{prop:nonnegativity} below.
	\item The update probabilities $\mathbf{P}_{\ell,g}$
		are given by a complicated expression involving $q$-Pochhammer symbols.
		In \Cref{sub:degenerations} below we discuss a number of previously studied
		PushTASEP like processes
		which arise as degenerations of the $q$-Hahn PushTASEP.
		In particular, under these degenerations the
		$\mathbf{P}_{\ell,g}$'s simplify.
	\item
		One can check that $\mathbf{P}_{\ell,g}(L)=0$ unless $L\ge \ell-g$.
		(Indeed, this is because $\varphi_{q^{-1}, q^{g}, \mu \nu q^{g-1}}(p \mid \ell)=0$
		unless $\ell-p\le g$.) 
		In words, if the previous jumping distance $\ell$
		is greater than the gap between $x_{i-1}$ and $x_i$,
		then the particle $x_i$ is deterministically pushed to the left.
		Therefore,
		update rule \eqref{P_ell_g_update_probability_definition}
		preserves the order of the particles.
	\item
		If $\ell>g$ and $\mu\nu$ equals $q^y$ for a positive integer $y$, the
		denominator in
		$\varphi_{q^{-1}, q^{g}, \mu \nu q^{g-1}}(p \mid \ell)$
		\eqref{phi_qmunu_definition}
		may vanish. However, we can still define $\varphi$ by continuity (canceling the corresponding factor in the numerator).
	\item
		The
		process can make infinitely many
		jumps in a single discrete time step
		(for example,
		when it starts from the step initial configuration
		$x_i(0)=-i$, $i\ge0$).
		However, for each $N$
		the behavior of the particles $x_i$, $i\le N$, is independent from the one of
		with $i>N$. Therefore, the dynamics restricted to $x_1,\ldots,x_N$
		is well-defined as a process with finitely many particles.
		These $N$-particle dynamics are compatible for various $N$, and so
		the dynamics of the process on infinite particle configurations
		having a rightmost particle is well-defined.
\end{enumerate}

\begin{proposition}
	\label{prop:nonnegativity}
	Under the restrictions \eqref{mu_nu_parameters_for_qHahn}
	on parameters
	we have $\mathbf{P}_{\ell,g}(L)\ge0$
	for all $\ell,g,L\in \mathbb{Z}_{\ge0}$, and
	$\sum_{L\in \mathbb{Z}_{\ge0}}\mathbf{P}_{\ell,g}(L) = 1$.
\end{proposition}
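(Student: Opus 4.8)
The plan is to handle the two assertions separately; the normalization is essentially bookkeeping, while the nonnegativity is the real content.

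For $\sum_{L\ge0}\mathbf{P}_{\ell,g}(L)=1$: the degenerate case $g=\infty$ (the leading particle) is immediate, since $\mathbf{P}_{\ell,\infty}=\varphi_{q,\mu,\nu}(\,\cdot\mid\infty)$ and this $q$-geometric-type law has total mass $1$ by the $q$-binomial theorem recorded in \Cref{sec:q_hyp}. For finite $g$, observe first that $\varphi_{q^{-1},q^{g},\mu\nu q^{g-1}}(p\mid\ell)$ vanishes unless $0\le p\le\ell$ and $\ell-p\le g$, so the double sum over $L$ and $p$ in \eqref{P_ell_g_update_probability_definition} is a finite sum of absolutely convergent series and the order of summation may be freely exchanged. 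Summing first over $m:=L-p\ge0$ gives $\sum_{m\ge0}\psi_{q,\nu\mu^{-1}q^{p},\nu q^{g},\nu^{2}q^{g+p}}(m)=1$ by the normalization of the $\psi$-distribution from \Cref{sec:q_hyp} (a ${}_4\phi_3$ summation), uniformly in $p$; summing then over $0\le p\le\ell$ gives $\sum_{p}\varphi_{q^{-1},q^{g},\mu\nu q^{g-1}}(p\mid\ell)=1$ by the normalization of the $\varphi$-distribution (a $q$-Chu--Vandermonde / $q$-Saalsch\"utz evaluation). The one point needing care is that the parameters occurring here lie in a region where those appendix identities apply, or extend to it by rational continuation; the possible vanishing denominator when $\mu\nu=q^{y}$ with $y\in\mathbb{Z}_{\ge1}$ is then handled by continuity exactly as noted after \eqref{P_ell_g_update_probability_definition}.

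For the nonnegativity, the difficulty is that for $\nu>0$ the weights $\varphi_{q^{-1},q^{g},\mu\nu q^{g-1}}(p\mid\ell)$ change sign, so $\mathbf{P}_{\ell,g}(L)$ is an a priori signed sum. The approach I would take is to evaluate the $p$-sum in closed form. Inserting the explicit $q$-Pochhammer expressions for $\varphi$ and $\psi$ from \eqref{phi_qmunu_definition} and \eqref{q_hyp_distribution_definition}, factoring out the $p$-independent prefactor, and converting the base-$q^{-1}$ symbols via $(a;q^{-1})_{n}=(a^{-1};q)_{n}(-a)^{n}q^{-\binom n2}$, the sum over $p$ becomes a terminating, balanced basic hypergeometric series. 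I expect it to collapse under a $q$-analogue of the Pfaff--Saalsch\"utz (${}_3\phi_2$) summation or a (very-)well-poised evaluation — plausibly the same circle of $q$-identities around \Cref{lemma:main_identity} — leaving $\mathbf{P}_{\ell,g}(L)$ equal to a single product of $q$-Pochhammer symbols (for $\ell>g$, a product in the shifted variable $L-(\ell-g)$, matching the vanishing $\mathbf{P}_{\ell,g}(L)=0$ for $L<\ell-g$); the natural guess is that this closed form is again a $\psi$-type distribution in $L$ with parameters built from $q,\mu,\nu,g,\ell$. One then reads off $\mathbf{P}_{\ell,g}(L)\ge0$ by checking the sign of each factor.

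The main obstacle is exactly this last step: identifying the summation theorem that makes the alternating $p$-sum collapse, and then verifying, factor by factor, that the resulting product is nonnegative throughout the window \eqref{mu_nu_parameters_for_qHahn}. I expect the constraint $\nu\le\sqrt q$, i.e. $\nu^{2}\le q$, to be precisely what keeps a factor such as $(\nu^{2}q^{g+p};q)_{\infty}$ (or a finite $(\nu^{2}q^{g+p-1};q)_{\bullet}$) of one fixed sign for all relevant $p$, while $0<\mu<1$ and $-1<\nu\le\mu$ control the ratios $\nu/\mu$ and $\nu q^{g}$; pinning down which factor forces each inequality, and reconciling the summation range $0\le p\le\min\{\ell,L\}$ with the natural range of the hypergeometric identity, is where the real work lies. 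Should a clean single-sum evaluation prove elusive, a fallback is to realize $\mathbf{P}_{\ell,g}$ as the marginal of an explicitly constructed joint law of two genuine (nonnegative) $q$-distributions coupled along an auxiliary lattice path, so that nonnegativity becomes automatic; but I would pursue the direct $q$-series route first.
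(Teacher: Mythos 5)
Your normalization argument is fine and is essentially the paper's: interchange the (absolutely convergent, for fixed $\ell,g$ finite-in-$p$) double sum and use that $\varphi$ and $\psi$ each sum to one. (Minor point: the $\psi$ normalization is the $q$-Gauss ${}_2\phi_1$ evaluation, not a ${}_4\phi_3$ summation, but this does not affect anything.)

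The nonnegativity part, however, has a genuine gap: the closed-form evaluation you are counting on does not exist. After factoring out the $p$-independent pieces, the alternating $p$-sum in \eqref{P_ell_g_update_probability_definition} is a \emph{terminating balanced ${}_4\phi_3$}, not a ${}_3\phi_2$, so $q$-Saalsch\"utz (or any standard well-poised summation) does not collapse it to a single product of $q$-Pochhammer symbols. Indeed $\mathbf{P}_{\ell,g}(L)$ is genuinely of ${}_4\phi_3$ type: \Cref{rmk:sqrt_q_maybe_a_proof_artefact} records
$\mathbf{P}_{1,1}(1)=\bigl(\mu(\nu^2-\nu+1)-\nu+q(1+\nu^2-(\mu+1)\nu)\bigr)/\bigl((1-\mu\nu)(1-q\nu^2)\bigr)$,
whose numerator does not factor into Pochhammer-type pieces and which changes sign as $\nu$ crosses $\sqrt q$; if a product formula held, the threshold $\nu\le\sqrt q$ would be read off factor-by-factor, and it cannot be. The paper's actual device is a \emph{transformation}, not a summation: one writes $\mathbf{P}_{\ell,g}(L)$ as a terminating balanced ${}_4\phi_3$, separately in the two regimes $\ell<g$ and $\ell\ge g$ (the latter after the index shift by $\ell-g$, consistent with $\mathbf{P}_{\ell,g}(L)=0$ for $L<\ell-g$), and then applies Watson's transformation \eqref{Watsontransformation} to produce the very-well-poised ${}_8\phi_7$ representations \eqref{P_ell_g_rewrite_first_case_87} and \eqref{P_ell_g_rewrite_second_case_87} with positive argument $q^{L+g+1}/\mu$. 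Nonnegativity is then manifest \emph{term by term} in the (still non-collapsed) ${}_8\phi_7$ sum together with its prefactors, and this is exactly where $\nu^2\le q$ enters (e.g. through the factor $(\nu^2/q;q)_k\ge0$). So the missing idea is the choice of the right transformation formula exhibiting termwise positivity, rather than a summation theorem. Your fallback (a coupling realizing $\mathbf{P}_{\ell,g}$ as a marginal of nonnegative $q$-distributions) is essentially what the paper does in the easy case $-1<\nu\le0$, where $\varphi_{q^{-1},q^g,\mu\nu q^{g-1}}(\cdot\mid\ell)\ge0$ and the update is literally a push by $p$ followed by an independent $\psi$-jump; but for $0<\nu\le\min\{\mu,\sqrt q\}$ those weights really are signed, and no such direct probabilistic coupling is available without the $q$-series work.
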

\begin{proof}
	The fact that
	$\sum_{L\in \mathbb{Z}_{\ge0}}\mathbf{P}_{\ell,g}(L) = 1$
	follows by interchanging the summations over $L$ and $p$
	(the latter coming from \eqref{P_ell_g_update_probability_definition})
	and using
	\begin{equation*}
		\sum_{p=0}^{\ell}\varphi_{q^{-1}, q^{g}, \mu \nu q^{g-1}}(p \mid \ell)=1,
		\qquad
		\sum_{L=p}^{\infty}\psi_{q, \nu \mu^{-1} q^{p}, \nu q^{g}, \nu^{2} q^{g+p}}(L-p)=1,
	\end{equation*}
	see \Cref{sec:q_hyp}.

	Turning to proving the positivity, there are two cases depending on the sign of $\nu$.
	First,
	we have
	$\varphi_{q^{-1}, q^{g}, \mu \nu q^{g-1}}(p \mid \ell) \geq 0$
	when $-1<\nu\le0$.
	Therefore,
	we can think that the $i$-th particle first moves $x_{i}(t) \to x_{i}(t)-p$
	with probability
	$\varphi_{q^{-1}, q^{g}, \mu \nu q^{g-1}}(p \mid \ell)$
	due to the push of the $i-1$-st particle. After that, the $i$-th particle makes an
	extra move $x_{i}(t)-p \to x_{i}(t)-p-m$ (where $m+p=L$)
	with probability $\psi_{q, \nu \mu^{-1} q^{p}, \nu q^{g}, \nu^{2} q^{g+p}}(m) \geq 0$.
	In other words, the jump by $L$
	in \Cref{fig:push_update_pic}
	is a combination of two jumps, by $p$ and $L-p$ (with $p$ random),
	each happening with a nonnegative probability.

	In the second case $0<\nu\le\min\{\mu,\sqrt q\}$
	the expression
	$\varphi_{q^{-1}, q^{g}, \mu \nu q^{g-1}}(p \mid \ell)$ might become negative,
	and the previous interpretation does not imply nonnegativity of $\mathbf{P}_{\ell,g}(L)$.
	Let us rewrite $\mathbf{P}_{\ell,g}(L)$
	in two different ways (depending on the order of $\ell$ and $g$)
	to show the nonnegativity.
	We have (here and below in the proof we use the notation from \Cref{sub:q_hyp_notation})
	\begin{align}
		\nonumber
		\mathbf{P}_{\ell,g}(L)
		&=
		\sum_{p=0}^{\min\{\ell,L\}}
		q^{gp}\frac{(\mu \nu/q;q^{-1})_{p}(q^{g};q^{-1})_{\ell-p}}{(\mu\nu q^{g-1};q^{-1})_{\ell}}
		\frac{(q^{-1};q^{-1})_{\ell}}{(q^{-1};q^{-1})_{p}(q^{-1};q^{-1})_{\ell-p}}
		\\&
		\nonumber
		\hspace{100pt}\times
		\mu^{L-p}
		\frac{(\nu\mu^{-1}q^p,\nu q^{g};q)_{L-p}}{(q,\nu^2 q^{g+p};q)_{L-p}}
	\frac{(\nu^2 q^{g+p},\mu;q)_{\infty}}{(\nu q^p,\mu\nu q^g;q)_{\infty}}
		\\&
		\label{P_ell_g_initial_formula}
		=
		\frac{(\mu; q)_{\infty}(\nu^{2} q^{g}; q)_{\infty}}{(\nu; q)_{\infty}(\nu \mu q^{g}; q)_{\infty}}
		\sum_{p=0}^{\min\{\ell,L\}}
		q^{gp}\frac{(\mu \nu/q;q^{-1})_{p}(q^{g};q^{-1})_{\ell-p}}{(\mu\nu q^{g-1};q^{-1})_{\ell}}
		\frac{(q^{-1};q^{-1})_{\ell}}{(q^{-1};q^{-1})_{p}(q^{-1};q^{-1})_{\ell-p}}
		\\&
		\hspace{100pt}\times
		\mu^{L-p}
		\frac{(\nu\mu^{-1}q^p,\nu q^{g};q)_{L-p}}{(q,\nu^2 q^{g+p};q)_{L-p}}
		\frac{(\nu;q)_p}{(\nu^2 q^g;q)_p}.
		\nonumber
	\end{align}
	When
	$\ell<g$, this expression is rewritten as follows:
	\begin{multline}
		\label{P_ell_g_rewrite_first_case}
		\mathbf{P}_{\ell,g}(L)
		=
		\mu^{L}\frac{(q^{g}; q^{-1})_{\ell} (\nu \mu^{-1};
		q)_{L}(\nu q^{g}; q)_{L}}{(\mu \nu q^{g-1}; q^{-1})_{\ell} (q;
		q)_{L}(\nu^{2} q^{g}; q)_{L}}
		\\
		{_{4}}\phi_{3}
		\left[\begin{array}{cccc} q^{-\ell} & \mu^{-1} \nu^{-1} q &  \nu &  q^{-L}
		\\  & \nu^{-1}q^{1-g-L} &  \nu \mu^{-1} & q^{g-\ell+1} \end{array}; q, q
		\right]
		\frac{(\mu; q)_{\infty}(\nu^{2} q^{g}; q)_{\infty}}{(\nu;
		q)_{\infty}(\nu \mu q^{g}; q)_{\infty}}.
	\end{multline}
	The equality between \eqref{P_ell_g_initial_formula} and
	\eqref{P_ell_g_rewrite_first_case} is termwise (when using the definition
	\eqref{q_hyp_defn} for $_4\phi_3$).

	We will use
	Watson's transformation formula
	\cite[(III.19)]{GasperRahman},
	\begin{multline}
		\label{Watsontransformation}
		{_{4}}\phi_{3} \left[
			\begin{array}{cccc}
				q^{-n} & a & b & c  \\  & d & e & f
			\end{array}
			; q, q \right]
		\\=
		\frac{(d/b, d/c; q)_{n}}{(d, d/bc; q)_{n}}\,
		{_{8}}\phi_{7} \left[
			\begin{array}{cccccccc} q^{-n} & \sigma & q \sigma^{\frac{1}{2}} & -q \sigma^{\frac{1}{2}} & f/a & e/a & b & c
			\\
			& \sigma^{\frac{1}{2}} &  - \sigma^{\frac{1}{2}} & e & f & ef/ab & ef/ac & e f q^{n}/a
		\end{array}; q, \frac{e f q^{n}}{bc} \right],
	\end{multline}
	where $d e f= a b c q^{1-n}$ and $\sigma = ef/aq$.
	Applying this formula to \eqref{P_ell_g_rewrite_first_case},
	we obtain
	\begin{multline}
		\label{P_ell_g_rewrite_first_case_87}
		\mathbf{P}_{\ell,g}(L)
		=
		\frac
		{(\mu; q)_{\infty}(\nu^{2} q^{g}; q)_{\infty}}
		{(\nu; q)_{\infty}(\nu \mu q^{g}; q)_{\infty}}
		\,
		\mu^{L}
		\frac{(q^{g}; q^{-1})_{\ell} (\nu \mu^{-1}; q)_{L}(\nu q^{g}; q)_{L}(\nu^{-1}q^{1-g}, q)_{\ell}(\nu^{-2} q^{1-g-L}; q)_{\ell}}
		{(\mu \nu q^{g-1}; q^{-1})_{\ell} (q; q)_{L}(\nu^{2} q^{g}; q)_{L}(\nu^{-1} q^{1-g-L}; q)_{\ell}(\nu^{-2}q^{1-g}; q)_{\ell}}
		\\ \times
		{_{8}}\phi_{7} \left[\begin{array}{cccccccc}
				q^{-\ell} & \nu^{2} q^{g-\ell-1} & \nu q^{\frac{g-\ell+1}2} & -\nu q^{\frac{g-\ell+1}2} & \nu^{2}/q & \mu \nu q^{g-\ell} & \nu & q^{-L}
			\\
			& \nu q^{\frac{g-\ell-1}2} &  - \nu q^{\frac{g-\ell-1}2} & q^{g-\ell+1} & \nu/\mu  & \nu q^{g-\ell} & \nu^{2}q^{L+g-\ell} & \nu^{2}q^{g}
		\end{array};
		q, \frac{q^{L+g+1}}{\mu} \right].
	\end{multline}

	When $\ell\ge g$, we must have $L\ge \ell-g$. Let us rewrite \eqref{P_ell_g_initial_formula} in another form:
	\begin{multline}
		\label{P_ell_g_rewrite_second_case}
		\mathbf{P}_{\ell,g}(L)
		=
		\mu^{L-\ell+g}
		\frac
		{(q^{\ell}; q^{-1})_{g} (\nu \mu^{-1} q^{\ell-g};q)_{L-\ell+g} (\nu; q)_{\ell-g}(\nu q^{g}; q)_{L-\ell+g}}
		{(\mu \nu; q)_{g} (q; q)_{L-\ell+g}(\nu^{2} q^{g}; q)_{L}}
		\\ \times
		{_{4}}\phi_{3}
		\left[\begin{array}{cccc}
				q^{-g} & \mu^{-1} \nu^{-1} q^{\ell-g+1} &  \nu
				q^{\ell-g} &  q^{-L+\ell-g}
				\\
				& \nu^{-1}q^{-L+\ell-2g+1} &  \nu \mu^{-1}
				q^{\ell-g} & q^{\ell-g+1}
			\end{array}; q, q
		\right]
		\frac{(\mu;
		q)_{\infty}(\nu^{2} q^{g}; q)_{\infty}}{(\nu; q)_{\infty}(\nu \mu q^{g};
		q)_{\infty}}.
	\end{multline}
	Again, the equality between \eqref{P_ell_g_initial_formula} and
	\eqref{P_ell_g_rewrite_second_case} is termwise up to an index shift by $\ell-g$.
	Using Watson's transformation formula
	\eqref{Watsontransformation}, we get
	\begin{align}
		\nonumber&
		\mathbf{P}_{\ell,g}(L)
		=
		\frac
		{(\mu; q)_{\infty}(\nu^{2} q^{g}; q)_{\infty}}
		{(\nu; q)_{\infty}(\nu \mu q^{g}; q)_{\infty}}
		\\
		\label{P_ell_g_rewrite_second_case_87}
		&\hspace{10pt}\times
		\mu^{L-\ell+g}\,
		\frac{(q^{\ell}; q^{-1})_{g} (\nu \mu^{-1} q^{\ell-g};q)_{L-\ell+g} (\nu; q)_{\ell-g}
		(\nu q^{g};q)_{L-\ell+g}(\nu^{-2}q^{-L-g+1}; q)_{g} (\nu^{-1}q^{-g+1}; q)_{g}}
		{(\mu \nu;q)_{g} (q; q)_{L-\ell+g}(\nu^{2} q^{g}; q)_{L} (\nu^{-1} q^{-L+\ell-2g+1};q)_{g}
		(\nu^{-2}q^{-\ell+1}; q)_{g}}
		\\ &\hspace{10pt}
		\nonumber
		\times {_{8}}\phi_{7}
		\left[\begin{array}{cccccccc}
				q^{-g} & \nu^{2} q^{\ell-g-1} & \nu
				q^{\frac{\ell-g+1}2} & -\nu q^{\frac{\ell-g+1}2} & \nu^{2}/q & \mu \nu & \nu
				q^{\ell-g} & q^{-L+\ell-g}
				\\
				& \nu q^{\frac{\ell-g-1}2} &  - \nu q^{\frac{\ell-g-1}2}
				& q^{\ell-g+1} & \nu/ \mu q^{\ell-g} & \nu & \nu^{2}q^{L} & \nu^{2}q^{\ell}
			\end{array}; q, \frac{q^{L+g+1}}{\mu}
		\right].
	\end{align}
	In both cases
	\eqref{P_ell_g_rewrite_first_case_87} and \eqref{P_ell_g_rewrite_second_case_87}
	all the
	prefactors and the
	terms in the sums for
	$_8\phi_7$ are manifestly nonnegative under our conditions \eqref{mu_nu_parameters_for_qHahn}
	(for the terms in $_8\phi_7$ this follows from the definition of $_8\phi_7$ as a sum).
	This completes the proof. 
\end{proof}
\begin{remark}
	\label{rmk:sqrt_q_maybe_a_proof_artefact}
	The condition $\nu\le \sqrt{q}$ in \eqref{mu_nu_parameters_for_qHahn},
	which was not present in the $q$-Hahn TASEP
	\cite{Corwin2014qmunu}, \cite{CorwinPetrov2015},
	is essential for the nonnegativity of transition probabilities in our $q$-Hahn PushTASEP. Indeed,
	\begin{equation*}
		\mathbf{P}_{1,1}(1)=
		\frac{\mu  \left(\nu ^2-\nu +1\right)-\nu +q \left( 1+\nu ^2-(\mu +1) \nu\right)}{(1-\mu  \nu ) (1-q \nu ^2)},
	\end{equation*}
	and the numerator in this expression is negative for, say, $q=\frac{1}{4}$, $\mu=\frac{3}{4}$, and $\nu=\frac{2}{3}$.
\end{remark}

\subsection{Degenerations}
\label{sub:degenerations}

The $q$-Hahn PushTASEP
update probabilities $\mathbf{P}_{\ell,g}(L)$
are defined by rather complicated expressions
\eqref{P_ell_g_update_probability_definition}.
Here we discuss a number of their degenerations when the parameter $q=0$. These
lead to some known and some new
stochastic particles systems with pushing. In \Cref{sec:beta_limit} we discuss another type of degeneration where $q\to 1$ which also simplifies the form of the update probabilities.

\subsubsection{Known PushTASEPs}

If we set $\nu=0$, the
factor
$\psi_{q, \nu \mu^{-1} q^{p}, \nu q^{g}, \nu^{2} q^{g+p}}(L-p)$
in \eqref{P_ell_g_update_probability_definition}
simplifies to
\begin{equation*}
	\psi_{q, \nu \mu^{-1} q^{p}, \nu q^{g}, \nu^{2} q^{g+p}}(L-p)
	\,\big\vert_{\nu=0}=\mu^{L-p}\frac{(\mu;q)_{\infty}}{(q;q)_{L-p}}.
\end{equation*}
This is the $q$-geometric distribution.
The first particle jumps according
to this
$q$-geometric distribution (note that $p=0$ for the first particle).
For the update probabilities of all other particles
we have
\begin{equation}\label{deg_P_nu0}
	\mathbf{P}_{\ell,g}(L)\,\big\vert_{\nu=0}
	=
	(\mu;q)_{\infty}
	\sum_{p=0}^{\min\left\{ \ell,L \right\}}
	q^{gp}\mu^{L-p}
	\frac{(q^g;q^{-1})_{\ell-p}(q^{-1};q^{-1})_{\ell}}
	{(q;q)_{L-p}(q^{-1};q^{-1})_{p}(q^{-1};q^{-1})_{\ell-p}}.
\end{equation}
We see that the $\nu=0$ process coincides with the
geometric $q$-PushTASEP introduced in
\cite[Section 6.3]{MatveevPetrov2014},
and our parameter $\mu$
corresponds to
$\alpha a_i$ (specific to each particle).

\medskip

Further setting
$q=0$ in the geometric $q$-PushTASEP reduces it to a
geometric PushTASEP (that is, a discrete time PushTASEP
with geometrically distributed jumps).
In the sum \eqref{deg_P_nu0} only one summand
will be nonzero, and\footnote{Throughout the paper
$\mathbf{1}_{\cdots}$ denotes the indicator.}
\begin{equation}\label{geom_push_q0_nu0}
	\mathbf{P}_{\ell,g}(L)\,\big\vert_{\nu=q=0}
	=
	\sum_{p=0}^{\min\left\{ \ell,L \right\}}
	(1-\mu)\,\mu^{L-p}\,\mathbf{1}_{p=\max\left\{ 0,\ell-g \right\}}
	=
	(1-\mu)\,\mu^{L-\max\{0,\ell-g\}}.
\end{equation}
In words, the first
particle jumps according to a geometric
distribution with parameter $\mu$,
and for each $i=2,3,\ldots$,
the particle $x_{i-1}$ pushes $x_i$
by the minimal possible distance which
preserves the order of the particles
(cf.~\Cref{fig:push_update_pic}),
and after this push the particle $x_i$ makes an independent
jump according to the geometric distribution.
The geometric PushTASEP is well-known,
see, e.g.,
\cite{BorFerr2008DF},
\cite{warrenwindridge2009some}
for its connections to Schur processes and
dynamics on them.

\medskip

Finally, in both the geometric
$q$-PushTASEP and the geometric PushTASEP
one can pass to
the continuous time by sending $\mu\to0$ and
rescaling the discrete time by $\mu^{-1}$.
For $q>0$, this produces the continuous time
$q$-PushTASEP
introduced in
\cite{BorodinPetrov2013NN} and considered in \cite{CorwinPetrov2015}.
For $q=0$ the process reduces
to the usual continuous time PushTASEP
\cite{Spitzer1970},
\cite{derrida1991dynamics}
(also referred to as the ``long-range TASEP'', and viewed as a
simplified model of Toom's anchored interface dynamics).

\begin{remark}[Particle-dependent parameters]
	In the geometric $q$-PushTASEP
	from \cite{MatveevPetrov2014},
	as well as in its degenerations,
	one can assign update probability
	with the particle-dependent parameter $\mu_i$
	to each $x_i$, and the
	resulting system remains exactly solvable via
	$q$-Whittaker
	or Schur symmetric functions
	(for the $q$-PushTASEP and PushTASEPs, respectively).
	Moreover, in the $q$-Hahn TASEP one can also take particle-dependent
	parameters $(\mu_i,\nu_i)$, under the condition that $\nu_i/\mu_i$ does not
	depend on the particle. The $q$-moments of the resulting system
	can be expressed as contour integrals
	coming from the inhomogeneous stochastic higher
	spin six vertex model
	\cite[Section 10.3]{BorodinPetrov2016inhom}.
	It is likely that the $q$-Hahn PushTASEP duality and
	moment formulas can be extended
	to include particle-dependent parameters,
	but here we do not pursue this direction.\footnote{The upcoming work
	\cite{BufetovMucciconiPetrov2018}
	connects the $q$-Hahn PushTASEP to stochastic vertex models.
	Formulas for observables in the $q$-Hahn PushTASEP
	with particle-dependent parameters
	can likely be obtained
	using that approach.}
\end{remark}

\subsubsection{A new $q=0$ particle system}
\label{sub:gB_push}

Let us briefly describe a limit as $q\searrow0$ and $\mu,\nu$ are fixed,
which leads to a new particle system.
Define the geometric-Bernoulli probability
distribution\footnote{This random
variable is a product of a Bernoulli random variable
with values in $\left\{ 0,1 \right\}$ and an independent
geometric random variable with values in $\left\{ 1,2,\ldots  \right\}$,
hence the
name.}
on $k\in \mathbb{Z}_{\ge0}$
by
\begin{equation*}
	\mathsf{gB}(\alpha,\beta;k)
	:=
	\begin{cases}
		\beta,& k=0;\\
		(1-\beta)(1-\alpha)\alpha^{k-1},& k\ge1.
	\end{cases}
\end{equation*}
We have for the two factors in the
sum in
\eqref{P_ell_g_update_probability_definition}:
\begin{equation}\label{q0_deg}
	\varphi_{q^{-1}, q^{g}, \mu \nu q^{g-1}}(p \mid \ell)
	\,\Big\vert_{q=0}
	=
	\begin{cases}
		1,&\textnormal{if $g=0$ and $p=\ell$};\\
		\dfrac{1}{1-\mu\nu},&
		\textnormal{if $0<g\le \ell$ and $p=\ell-g$};\\[9pt]
		\dfrac{-\mu\nu}{1-\mu\nu},&
		\textnormal{if $0<g\le\ell$ and $p=\ell-g+1$};\\[6pt]
		1,&\textnormal{if $g>\ell$ and $p=0$};\\
		0,&\textnormal{otherwise},
	\end{cases}
\end{equation}
and
\begin{equation}\label{q0_deg1}
	\psi_{q, \nu \mu^{-1} q^{p}, \nu q^{g}, \nu^{2} q^{g+p}}(L-p)
	\,\Big\vert_{q=0}
	=
	\begin{cases}
		\mathsf{gB}(\mu,\frac{(1-\mu)(1+\nu)}{1-\mu\nu};L),
		& p=g=0;\\[6pt]
		\mathsf{gB}(\mu,\frac{1-\mu}{1-\nu};L),
		&p=0,\ g>0;\\[6pt]
		\mathsf{gB}(\mu,\frac{1-\mu}{1-\mu\nu};L-p),
		& p>0,\ g=0;\\[6pt]
		\mathsf{gB}(\mu,1-\mu;L-p),
		& p>0,\ g>0.
	\end{cases}
\end{equation}
The first particle in this $q=0$ process
jumps according to the distribution
$\mathsf{gB}(\mu,\frac{1-\mu}{1-\nu};\cdot)$.
To write down the update probabilities
of all other particles, observe that
the combination \eqref{P_ell_g_update_probability_definition}
of the quantities \eqref{q0_deg}--\eqref{q0_deg1}
takes the form
(recall that $\mathbf{P}_{\ell,g}(L)=0$ unless $L\ge \ell-g$)
\begin{equation}\label{q0_full_prob_P}
	\mathbf{P}_{\ell,g}(L)\,\big\vert_{q=0}=
	\begin{cases}
		\mathsf{gB}(\mu,\frac{1-\mu}{1-\nu};L)
		& \ell<g;
		\\[6pt]
		\mathsf{gB}(\mu,\frac{1-\mu}{1-\mu\nu};L-\ell+g)
		& \ell>g,\ L\ge \ell-g;
		\\[6pt]
		\mathsf{gB}(\mu,\frac{1-\mu}{(1-\nu)(1-\mu\nu)};L)
		& \ell=g>0;
		\\[6pt]
		\mathsf{gB}(\mu,\frac{(1-\mu)(1+\nu)}{1-\mu\nu};L)
		& \ell=g=0.
	\end{cases}
\end{equation}
The condition that all the update probabilities
$\mathbf{P}_{\ell,g}(L)\,\vert_{q=0}$
are nonnegative is equivalent to
$0\le \mu<1$ and that
all the second parameters
of the geometric-Bernoulli distributions
in \eqref{q0_full_prob_P}
are between $0$ and $1$. This leads to
\begin{equation*}
	-1\le \nu\le 0, \ 0 \le \mu<1
	\qquad
	\textnormal{or}
	\qquad
	0<\nu<1,\ \frac{\nu}{1-\nu+\nu^2}\le \mu<1.
\end{equation*}
For $\nu\le 0$ we can interpret \eqref{q0_deg} as a
random pushing caused by the jump of $x_{i-1}$,
and \eqref{q0_deg1} as an independent jump of
$x_i$ after the push (cf. \Cref{fig:push_update_pic}).

Setting $\nu=0$ in \eqref{q0_full_prob_P}
turns all the geometric-Bernoulli probabilities
into the geometric ones with parameter $\mu$.
This
recovers the discrete time geometric PushTASEP
as in
\eqref{geom_push_q0_nu0}.
Thus, the $q=0$ degeneration of the $q$-Hahn PushTASEP
can be viewed as a new nontrivial one-parameter
extension of the geometric PushTASEP.
Because $q$-moment formulas do not easily
survive the $q\searrow0$ degeneration,
here we do not pursue computations for this $q=0$ particle system.

A $q=0$ degeneration of the $q$-Hahn TASEP
was introduced in
\cite{derbyshev2012totally},
\cite{povolotsky2015gen_tasep}.
Its asymptotic behavior was studied recently in
\cite{SaenzKnizelPetrov2018}
via Schur measures.

\subsection{Duality}
\label{sub:duality}

Like the
$q$-Hahn TASEP,
our
$q$-Hahn PushTASEP satisfies a duality relation which we now describe.
This is one of the main results of the present work.

Fix $N$ and $k$ and recall the spaces
$\mathbb{X}^N$, $\mathbb{Y}^N$,
and $\mathbb{Y}^{N}_k$
\eqref{space_X_N}--\eqref{space_Y_N_y_Nk}.
Let $\mathsf{P}^{\textnormal{PushTASEP}}_{q,\mu,\nu}$
denote the one-step Markov transition operator of the $q$-Hahn PushTASEP
acting on $\mathbb{X}^{N}$
(the evolution of the $N$ rightmost particles under the $q$-Hahn PushTASEP
is independent from the rest of the system).
Recall the duality functional
$\mathfrak{H}(\vec x,\vec y)=\prod_{i=0}^{N}q^{y_i(x_i+i)}$
\eqref{H_duality_functional}
which, by agreement, is zero unless $y_0=0$.
Here $\vec x\in \mathbb{X}^N$, $\vec y\in \mathbb{Y}^{N}$.
By $\mathfrak{H}_k$ denote the restriction of $\mathfrak{H}$ to
$\mathbb{X}^N\times \mathbb{Y}^N_k$.
\begin{theorem}
	Let $(q,\mu,\nu)$ be the parameters of the $q$-Hahn PushTASEP
	satisfying
	\eqref{mu_nu_parameters_for_qHahn}. For any $k\ge 1$
	there exists $\mu_0>0$ (depending on $k$) such that for all $0<\mu<\mu_0$
	we have
	\label{thm:push_duality}
	\begin{equation}
		\label{push_duality_formulation_equation}
		\mu^k \mathsf{P}^{\textnormal{PushTASEP}}_{q,\mu,\nu} \mathfrak{H}_k
		\bigl(\mathsf{P}^{\textnormal{Boson}}_{q,q/\mu,\nu}\bigr)^{T}
		=
		\nu^k
		\mathfrak{H}_k
		\bigl(\mathsf{P}^{\textnormal{Boson}}_{q,q/\nu,\nu}\bigr)^{T}.
	\end{equation}
	Here ``$T$'' means transpose, that is, the Boson operators act in the
	$\vec y$ variables while the $q$-Hahn PushTASEP transition operator
	acts on the $\vec x$'s.
\end{theorem}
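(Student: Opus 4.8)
The plan is to expand both sides of \eqref{push_duality_formulation_equation} as explicit multiple sums over the natural intermediate variables and to verify the equality after regrouping. Reading the operators as matrices, the right-hand side at $(\vec x,\vec y)$ is the \emph{finite} sum $\nu^k\sum_{\vec y'\in\mathbb{Y}^N_k}\mathfrak{H}_k(\vec x,\vec y')\,\mathsf{P}^{\textnormal{Boson}}_{q,q/\nu,\nu}(\vec y,\vec y')$, with coefficients built from the local maps $[\mathcal{A}_{q,q/\nu,\nu}]_i$ and depending on $\vec x$ only through $\mathfrak{H}_k(\vec x,\vec y')=\prod_{i=1}^Nq^{y_i'(x_i+i)}$ (valid on $\{y_0'=0\}$); the left-hand side at $(\vec x,\vec y)$ is $\mu^k\sum_{\vec x',\vec y'}\mathsf{P}^{\textnormal{PushTASEP}}_{q,\mu,\nu}(\vec x,\vec x')\,\mathfrak{H}_k(\vec x',\vec y')\,\mathsf{P}^{\textnormal{Boson}}_{q,q/\mu,\nu}(\vec y,\vec y')$. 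On the left I would write $x_i'=x_i-\ell_i$ for the displacement of the $i$-th particle, use that the $N$-particle PushTASEP kernel factorizes as $\prod_{i=1}^N\mathbf{P}_{\ell_{i-1},g_i}(\ell_i)$ with $g_i=x_{i-1}-x_i-1$, $x_0=+\infty$, $\ell_0=\infty$, split each $\mathbf{P}_{\ell_{i-1},g_i}(\ell_i)$ as in \eqref{P_ell_g_update_probability_definition} into a push amount $p_i$ weighted by $\varphi_{q^{-1},q^{g_i},\mu\nu q^{g_i-1}}(p_i\mid\ell_{i-1})$ and a free jump $\ell_i-p_i$ weighted by $\psi_{q,\dots}$, and rewrite $\mathfrak{H}_k(\vec x',\vec y')=\mathfrak{H}_k(\vec x,\vec y')\prod_iq^{-y_i'\ell_i}$; finally $\mathsf{P}^{\textnormal{Boson}}_{q,q/\mu,\nu}$ contributes transfer amounts $s_i$ with $y_i'=y_i-s_i+s_{i+1}$, $s_1=0=s_{N+1}$ (the constraint $s_1=0$, with $y_0=0$, being forced by the support of $\mathfrak{H}_k$).

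The sum over the $\ell_i$ is infinite, and $\mathfrak{H}_k$ contributes the factor $q^{-y_i'\ell_i}$ which grows with $\ell_i$ since $0<q<1$, whereas the $\psi$-tails decay geometrically at a rate governed by $\mu$ (when $\nu=0$ the free jump is literally $q$-geometric with parameter $\mu$). Hence for each $k$ there is a threshold $\mu_0>0$ such that for $0<\mu<\mu_0$ every sum on both sides converges absolutely — which is exactly the hypothesis of the theorem, and the reason that only finitely many $q$-moments of the process are finite (cf.\ the introduction). Under this hypothesis all reorderings of summation below are legitimate.

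It then remains to carry out the summations. I would process the particles and sites one at a time, $i=1,2,\dots,N$, matching the sequential structure of all three operators (each PushTASEP factor sees only the previous displacement $\ell_{i-1}$; each $[\mathcal{A}]_i$ sees only the occupations of sites $i-1$ and $i$), and argue by induction on $i$. The $i=1$ step — the first particle jumps by $\varphi_{q,\mu,\nu}(\cdot\mid\infty)$ with no push — uses only the $q$-binomial theorem and some $q$-Pochhammer manipulation, and there one sees the role of the scalar prefactors $\mu^k$, $\nu^k$. For $i\ge2$, where the push amount $p_i$ and the gap $g_i$ are active, the step amounts to summing out $\ell_i$ (with its weight $q^{-y_i'\ell_i}$ and the $s_i$, $s_{i+1}$ Boson transfers) against the product of the $\varphi_{q^{-1},q^{g_i},\mu\nu q^{g_i-1}}$, $\psi_{q,\dots}$, $\varphi_{q,q/\mu,\nu}$ and $\varphi_{q,q/\nu,\nu}$ factors; this is a genuine basic-hypergeometric identity, namely \Cref{lemma:main_identity}, proved separately in \Cref{sub:lemma_proof}. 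Invoking it at every step closes the induction and gives \eqref{push_duality_formulation_equation}. I expect the main obstacle to be organizing the bookkeeping so that exactly one instance of \Cref{lemma:main_identity} is consumed at each site and so that the ``state'' $\ell_{i-1}$ is threaded correctly between consecutive steps, together with the two boundary cases $i=1$ ($x_0=+\infty$, $\ell_0=\infty$, $s_1=0$) and $i=N$ (no particles leave site $N$); the genuinely hard analytic content is the identity \Cref{lemma:main_identity} itself.
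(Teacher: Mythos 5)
Your proposal is correct and follows essentially the same route as the paper: write out both sides as explicit sums (with convergence for small $\mu$ guaranteed by the $\mu^{L}$ decay in the $\psi$-factors against the bounded powers $q^{-y_i'\ell_i}$), then induct over the particles/sites, with the one-particle case handled by a $q$-binomial (q-Gauss) summation as in \Cref{lemma:induciton_base} and each additional particle consumed by one application of \Cref{lemma:main_identity}, exactly as in the paper's induction on $N$. The only differences are presentational (your forward bookkeeping in $i$ versus the paper's peeling off of the last particle, and a harmless slip about the boundary at site $N$, where particles may leave but none arrive), so the argument matches the paper's proof of \Cref{thm:push_duality} modulo \Cref{lemma:main_identity}.
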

\begin{remark}
	The condition that $\mu$ is sufficiently small guarantees the convergence of the
	infinite
	series coming from the action of
	$\mathsf{P}^{\textnormal{PushTASEP}}_{q,\mu,\nu}$.
	This convergence issue
	is the reason that only finitely many of the $q$-moments
	of the $q$-Hahn PushTASEP exist and are given by
	the contour integrals (\Cref{thm:push_q_moments_in_the_text}, see also \Cref{lemma:finiteness_q_moments}).
	
	Note also that unlike in the $q$-Hahn TASEP
	duality (\Cref{thm:TASEP_duality}),
	in \eqref{push_duality_formulation_equation} the Boson operators do not necessarily
	have nonnegative matrix elements. This duality is {\it not} a Markov duality due to this lack of positivity as well as the factors $\mu^k$ and $\nu^k$. Despite this, we will see that it still provides meaningful information about how the expected value of the duality function evolves over time.
\end{remark}

The duality relation \eqref{push_duality_formulation_equation} was guessed from the contour
integral formulas (\Cref{thm:push_q_moments_in_the_text}) which generalize those known for the geometric $q$-pushTASEP (with step initial data). It was not a priori clear that the guessed formulas encoded expectations for any particle system. However, we discovered the $q$-Hahn pushTASEP introduced here satisfies both the duality (which is a result for general initial data) and the contour integral formulas (again, for step initial data).

Here we  directly verify that
the $q$-Hahn PushTASEP
defined in
\Cref{sub:def_nonnegativity}
satisfies
\eqref{push_duality_formulation_equation}.
The proof of the duality relation occupies the rest of this subsection
and is based on \Cref{lemma:main_identity} which we prove in the next
\Cref{sub:lemma_proof}.

First, let us write \eqref{push_duality_formulation_equation}
out for fixed $\vec x=(x_1,\ldots,x_N )\in \mathbb{X}^{N}$ and
$\vec y=(y_0,y_1,\ldots,y_N )\in \mathbb{Y}^{N}_{k}$.
We need to show that
\begin{equation}
	\label{push_duality_formulation_equation_as_sums}
	\mu^k
	\sum_{\vec x'\in \mathbb{X}^{N}}
	\sum_{\vec y'\in \mathbb{Y}^{N}_k}
	\mathsf{P}^{\textnormal{PushTASEP}}_{q,\mu,\nu}(\vec x,\vec x')
	\mathfrak{H}_k (\vec x',\vec y')
	\mathsf{P}^{\textnormal{Boson}}_{q,q/\mu,\nu}(\vec y,\vec y')
	=
	\nu^k
	\sum_{\vec y''\in \mathbb{Y}^{N}_k}
	\mathfrak{H}_k (\vec x,\vec y'')
	\mathsf{P}^{\textnormal{Boson}}_{q,q/\nu,\nu}(\vec y,\vec y'').
\end{equation}
If $y_0>0$, both sides of \eqref{push_duality_formulation_equation_as_sums}
vanish because
$\mathsf{P}^{\textnormal{Boson}}_{q,q/\nu,\nu}(\vec y,\vec y')$
is nonzero only when $y_0'\ge y_0$, and we use the definition of $\mathfrak{H}$
\eqref{H_duality_functional}.
Thus, we can and will assume that $y_0=0$.
Continuing, we further expand \eqref{push_duality_formulation_equation_as_sums}:
\begin{equation}
	\label{push_duality_formulation_equation_detail}
	\begin{split}
		&
		\mu^k
		\sum_{\vec x'\in \mathbb{X}^{N}}
		\sum_{\vec y'\in \mathbb{Y}^{N}_k}
		\,\prod_{i=1}^N
		\mathbf{P}_{x_{i-1}-x_{i-1}',x_{i-1}-x_i-1}(x_i-x_i')
		\prod_{i=1}^{N}
		\varphi_{q,q/\mu,\nu}
		\left( \sum\nolimits_{j=0}^{i-1}(y_j'-y_j)\;\Big\vert\;y_i \right)
		\prod_{r=0}^{N}q^{y_r'(x_r'+r)}
		\\
		&\hspace{120pt}
		=
		\nu^k
		\sum_{\vec y''\in \mathbb{Y}^{N}_k}
		\,
		\prod_{i=1}^{N}\varphi_{q,q/\nu,\nu}
		\left( \sum\nolimits_{j=0}^{i-1}(y_j''-y_j)\;\Big\vert\;y_i \right)
		\prod_{r=0}^{N}q^{y_r''(x_r+r)}
	\end{split}
\end{equation}
(the products over $r$ in both sides vanish if $y_0'$ or $y_0''$,
respectively, are positive).
We will prove \eqref{push_duality_formulation_equation_detail}
by induction on $N$.
\begin{lemma}[Induction base, case $N=1$]
	\label{lemma:induciton_base}
	For any $x_1\in \mathbb{Z}$ and $y_1\ge0$
	there exists $\mu_0>0$ such that
	for all $0<\mu<\mu_0$
	we have
	\begin{equation*}
		\mu^{y_1}\varphi_{q,q/\mu,\nu}(0\mid y_1)
		\sum_{x_1'=-\infty}^{x_1}\varphi_{q,\mu,\nu}(x_1-x_1'\mid \infty)
		\,q^{y_1(x_1'+1)}
		=
		\nu^{y_1}\varphi_{q,q/\nu,\nu}(0\mid y_1)\,
		q^{y_1(x_1+1)}.
	\end{equation*}
\end{lemma}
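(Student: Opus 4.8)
The plan is to reduce the identity to a single application of the $q$-binomial theorem, with the ``$\mu$ small'' hypothesis entering only through the radius of convergence of the resulting series. First I would insert the explicit $q$-beta-binomial weights from \eqref{phi_qmunu_definition}. Evaluating at argument $0$ kills every $j$-dependent factor and collapses the $q$-binomial coefficient, giving
$$\varphi_{q,q/\mu,\nu}(0\mid y_1)=\frac{(q/\mu;q)_{y_1}}{(\nu;q)_{y_1}},\qquad \varphi_{q,q/\nu,\nu}(0\mid y_1)=\frac{(q/\nu;q)_{y_1}}{(\nu;q)_{y_1}},$$
while sending the gap to $\infty$ in $\varphi_{q,\mu,\nu}(\ell\mid m)$ (a routine limit recorded in \Cref{sec:q_hyp}) produces the $_1\phi_0$-type weight $\varphi_{q,\mu,\nu}(\ell\mid\infty)=\tfrac{(\mu;q)_\infty}{(\nu;q)_\infty}\,\tfrac{(\nu/\mu;q)_\ell}{(q;q)_\ell}\,\mu^\ell$.

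Next I would change the summation variable to $\ell:=x_1-x_1'\in\mathbb Z_{\ge0}$, so that $q^{y_1(x_1'+1)}=q^{y_1(x_1+1)}q^{-y_1\ell}$, pull $q^{y_1(x_1+1)}$ and the $\ell$-independent prefactor $\tfrac{(\mu;q)_\infty}{(\nu;q)_\infty}$ out of the sum, and recognize the remainder as
$$\sum_{\ell=0}^{\infty}\frac{(\nu/\mu;q)_\ell}{(q;q)_\ell}\,\bigl(\mu q^{-y_1}\bigr)^{\ell}={}_1\phi_0\!\left(\nu/\mu;-;q,\mu q^{-y_1}\right)=\frac{(\nu q^{-y_1};q)_\infty}{(\mu q^{-y_1};q)_\infty}$$
by the $q$-binomial theorem. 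This is exactly where ``$\mu$ small'' is used: the series converges if and only if $|\mu q^{-y_1}|<1$, i.e.\ $\mu<q^{y_1}$, so I would simply take $\mu_0:=q^{y_1}$.

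Finally, substituting back, the left-hand side becomes $\mu^{y_1}\,\tfrac{(q/\mu;q)_{y_1}}{(\nu;q)_{y_1}}\,\tfrac{(\mu;q)_\infty}{(\nu;q)_\infty}\,\tfrac{(\nu q^{-y_1};q)_\infty}{(\mu q^{-y_1};q)_\infty}\,q^{y_1(x_1+1)}$, and it remains to match this with $\nu^{y_1}\tfrac{(q/\nu;q)_{y_1}}{(\nu;q)_{y_1}}q^{y_1(x_1+1)}$. I would split off the finite parts using $(aq^{-n};q)_\infty=(aq^{-n};q)_n\,(a;q)_\infty$, so that the infinite Pochhammers cancel down to $(\nu q^{-y_1};q)_{y_1}/(\mu q^{-y_1};q)_{y_1}$, and then apply the reflection identity $(a;q)_n=(-a)^n q^{n(n-1)/2}(q^{1-n}/a;q)_n$ with $n=y_1$ and $a=q/\mu$, resp.\ $a=q/\nu$, which rewrites $(q/\mu;q)_{y_1}=(-q/\mu)^{y_1}q^{\binom{y_1}{2}}(\mu q^{-y_1};q)_{y_1}$ and similarly for $\nu$. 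After cancellation the $q^{\binom{y_1}{2}}$ factors drop out and the scalar prefactor reduces to $\mu^{y_1}(\nu/\mu)^{y_1}=\nu^{y_1}$, producing exactly the right-hand side. There is no real obstacle here: the only genuine input is the $q$-binomial theorem, and the only points needing care are the convergence bound that fixes $\mu_0$ and the bookkeeping of the reversed-order $q$-Pochhammer symbols.
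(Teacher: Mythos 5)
Your proof is correct and follows essentially the same route as the paper: expand the $\varphi$ weights, shift the summation index to $\ell=x_1-x_1'$, and sum via the $q$-binomial theorem at argument $\mu q^{-y_1}$, which also yields the threshold $\mu_0=q^{y_1}$. The only cosmetic difference is that the paper recognizes the resulting expression as the normalization $\sum_{d}\varphi_{q,\mu q^{-y_1},\nu q^{-y_1}}(d\mid\infty)=1$, whereas you verify the leftover $q$-Pochhammer identity by hand with the reflection identity --- the same computation in different packaging.
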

\begin{proof}
	Expand the definition of $\varphi$
	\eqref{phi_qmunu_definition}, \eqref{phi_qmunu_infinity_definition},
	and rewrite the claim as
	\begin{equation*}
		\mu^{y_1}
		\frac{(q/\mu;q)_{y_1}}{(\nu;q)_{y_1}}
		\sum_{x_1'=-\infty}^{x_1}
		\mu^{x_1-x_1'}
		\frac{(\nu/\mu;q)_{x_1-x_1'}}{(q;q)_{x_1-x_1'}}
		\frac{(\mu;q)_{\infty}}{(\nu;q)_{\infty}}
		\,q^{y_1(x_1'+1)}
		=
		\nu^{y_1}
		\frac{(q/\nu;q)_{y_1}}{(\nu;q)_{y_1}}\,
		q^{y_1(x_1+1)}.
	\end{equation*}
	This simplifies to
	\begin{equation*}
		\sum_{d=0}^{\infty}
		(\mu q^{-y_1})^{d}
		\frac{(\nu/\mu;q)_{d}}{(q;q)_{d}}
		\frac{(\mu q^{-y_1};q)_{\infty}}{(\nu q^{-y_1};q)_{\infty}}
		=1,\qquad d=x_1-x_1',
	\end{equation*}
	which is simply
	$\sum_{d=0}^{\infty}\varphi_{q,\mu q^{-y_1},\nu q^{-y_1}}(d\mid\infty)=1$.
	Note that the series converges for sufficiently small $\mu$.
\end{proof}

The induction step $N-1\to N$ is based on the following lemma:
\begin{lemma}
	\label{lemma:main_identity}
	For all nonnegative integers $g,\ell,y$ there exists
	$\mu_0>0$ such that for
	all $0<\mu<\mu_0$ we have
	\begin{equation}
		\label{main_identity_equation}
		\begin{split}
			&
			\sum_{t=0}^{y}
			\frac{(q; q)_{y}}{(q; q)_{t}(q; q)_{y-t}}\,
			q^{(g+1)t}
			\nu^{y-t} (\nu^{-1}q; q)_{y-t}
			(\nu^{2}q^{-1}; q)_{t}
			\\&
			\hspace{50pt}
			=
			\sum_{s=0}^{y}
			\frac{(q; q)_{y}}{(q; q)_{s}(q; q)_{y-s}}\,
			q^{(g+1-\ell)s}
			\mu^{y-s}
			(\mu^{-1}q; q)_{y-s}
			(\mu \nu q^{-1}; q)_{s}
			\sum_{L=0}^{\infty} q^{-L(y-s)} \mathbf{P}_{\ell, g}(L).
		\end{split}
	\end{equation}
\end{lemma}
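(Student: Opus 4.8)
The plan is to recognise the inner sum over $L$ on the right-hand side of \eqref{main_identity_equation} as a $q$-Laplace transform of the update law, $\Phi_n:=\sum_{L\ge0}q^{-Ln}\mathbf{P}_{\ell,g}(L)$ with $n=y-s$, to compute it in closed form, and then to match the two sides through a generating-function argument. Since by \eqref{P_ell_g_update_probability_definition} the measure $\mathbf{P}_{\ell,g}$ is the convolution of $\varphi_{q^{-1},q^{g},\mu\nu q^{g-1}}(\cdot\mid\ell)$ with the $q$-hypergeometric distribution $\psi_{q,\nu\mu^{-1}q^{p},\nu q^{g},\nu^{2}q^{g+p}}$, this transform factorises as $\Phi_n=\sum_{p}\varphi_{q^{-1},q^{g},\mu\nu q^{g-1}}(p\mid\ell)\,q^{-pn}\cdot\sum_{m\ge0}\psi_{q,\nu\mu^{-1}q^{p},\nu q^{g},\nu^{2}q^{g+p}}(m)\,q^{-mn}$. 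The outer sum over $p$ has only $\min\{\ell,g\}+1$ nonzero terms, and the inner sum over $m$ is a non-terminating ${}_2\phi_1$ which, because $\psi$ is by construction the normalised summand of a $q$-Gauss-summable series, is turned into a terminating series by Heine's transformation; so $\Phi_n$ reduces to a finite double $q$-hypergeometric sum.

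A cleaner route is to bypass the convolution and substitute directly the very-well-poised representations of $\mathbf{P}_{\ell,g}(L)$ from the proof of \Cref{prop:nonnegativity}: formulas \eqref{P_ell_g_rewrite_first_case}--\eqref{P_ell_g_rewrite_first_case_87} when $\ell<g$ and \eqref{P_ell_g_rewrite_second_case}--\eqref{P_ell_g_rewrite_second_case_87} when $\ell\ge g$ write $\mathbf{P}_{\ell,g}(L)$ as $\mu^{L}$ times an explicit $q$-Pochhammer prefactor times a ${}_4\phi_3$ (or, after Watson's transformation \eqref{Watsontransformation}, a very-well-poised ${}_8\phi_7$) in which $q^{-L}$ appears as a parameter. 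Interchanging the finite hypergeometric sum with the sum over $L$, the dependence on $L$ collapses by a single application of the $q$-binomial theorem, leaving a ${}_4\phi_3$ or ${}_8\phi_7$ in the summation index alone; a summation formula for terminating very-well-poised series — Jackson's ${}_8\phi_7$ sum or its ${}_6\phi_5$ specialisation — should then give $\Phi_n$ in closed product form.

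With $\Phi_n$ available, I finish \eqref{main_identity_equation} by comparing generating functions in an auxiliary variable $z$. Multiplying each side of \eqref{main_identity_equation} by $z^{y}/(q;q)_{y}$ and summing over $y\ge0$ collapses the binomial convolutions by the $q$-binomial theorem: the left side becomes $\dfrac{(\nu^{2}q^{g}z;q)_{\infty}\,(qz;q)_{\infty}}{(q^{g+1}z;q)_{\infty}\,(\nu z;q)_{\infty}}$, while the right side becomes $\dfrac{(\mu\nu q^{g-\ell}z;q)_{\infty}}{(q^{g+1-\ell}z;q)_{\infty}}\sum_{L\ge0}\mathbf{P}_{\ell,g}(L)\,\dfrac{(q^{1-L}z;q)_{\infty}}{(\mu q^{-L}z;q)_{\infty}}$, and extracting the coefficient of $z^{y}$ recovers \eqref{main_identity_equation} for that fixed $y$. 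Substituting the closed form for the last $L$-series (equivalently for $\Phi_n$) reduces the claim to a rational-function identity in $z$, which I verify directly; in particular all the $\ell$-dependent factors cancel once the very-well-poised summation has been used.

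The hard part is precisely this cancellation of $\ell$: the right-hand side of \eqref{main_identity_equation} depends on $\ell$ both through the power $q^{(g+1-\ell)s}$ and through $\mathbf{P}_{\ell,g}$, while the left-hand side does not, so the proof must pass through a transformation that trades the $\ell$-dependence for parameters that disappear after summation — this is exactly the role of Watson's transformation \eqref{Watsontransformation} between balanced ${}_4\phi_3$'s and very-well-poised ${}_8\phi_7$'s, already used in \Cref{prop:nonnegativity}. The remaining difficulty is bookkeeping: several Pochhammer symbols carry base $q^{-1}$ rather than $q$, the index shift by $\ell-g$ forces the two cases $\ell<g$ and $\ell\ge g$ to be treated separately, and every interchange of summation and the convergence of the non-terminating ${}_2\phi_1$ and of the $L$-series are legitimate only for $0<\mu<\mu_0$, which is the origin of the smallness restriction on $\mu$ in the statement.
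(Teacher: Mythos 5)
Your opening move (writing the inner sum as $\Phi_n=\sum_L q^{-Ln}\mathbf{P}_{\ell,g}(L)$, factorising through the convolution structure of \eqref{P_ell_g_update_probability_definition}, and applying Heine's transformation to turn the non-terminating $_2\phi_1$ into a terminating one) is exactly Step~1 of the paper's proof, and your generating-function step in $z$ correctly reproduces the paper's Step~4: both sides do collapse to ratios of infinite $q$-Pochhammer symbols times the $L$-series you write. The convergence remarks and the origin of the restriction $0<\mu<\mu_0$ are also right.

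The gap is the central claim that $\Phi_n$ (equivalently the series $\sum_L\mathbf{P}_{\ell,g}(L)\,(q^{1-L}z;q)_{\infty}/(\mu q^{-L}z;q)_{\infty}$) admits a \emph{closed product form} via Jackson's $_8\phi_7$ summation or its $_6\phi_5$ specialisation. It does not. Jackson's formula sums terminating very-well-poised \emph{balanced} $_8\phi_7$'s with argument $q$; the $_8\phi_7$'s in \eqref{P_ell_g_rewrite_first_case_87} and \eqref{P_ell_g_rewrite_second_case_87} have argument $q^{L+g+1}/\mu$ and are not of that type, and summing over $L$ afterwards does not produce a Jackson-summable series either. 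After Heine's transformation one is left with a genuine finite \emph{double} sum (over $p$ and $r$, cf.\ \eqref{main_identity_equation_proof2}), not a product, so the identity to be proved is a triple sum equal to a single sum, \eqref{main_identity_equation_proof3}. There is no known one-shot summation that evaluates it; this is precisely why the paper must proceed by induction on $\ell$: after specialising $\nu=q^{-x}$ and $\gamma=q^{-g}$ (generic), it derives a first-order recursion in $\ell$ for the $q$-exponential generating function (Steps~3--6), compares coefficients of $\chi^z$ and then of $q^{kz}$ to reduce the recursion to a four-term contiguous relation among terminating $_4\phi_3$'s \eqref{main_identity_equation_proof9}, and proves that relation by a second induction on the truncation level of an incomplete $_4\phi_3$ (Step~8), where the telescoped difference is an explicit ratio of $q$-Pochhammer symbols. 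Your ``which I verify directly'' papers over all of this: the rational identity you would be left with is exactly the hard multi-sum identity the lemma encodes, and the tool you invoke to dispatch it is not applicable. To repair the argument you would need either a correct evaluation/transformation of the relevant double sum or an inductive scheme of the kind the paper uses.
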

We prove \Cref{lemma:main_identity} in the next \Cref{sub:lemma_proof}.

\begin{proof}[Proof of \Cref{thm:push_duality} modulo \Cref{lemma:main_identity}]
	Denote $y=y_N$ and $\bar k=k-y$.
	Write
	for \eqref{push_duality_formulation_equation_detail}:
	\begin{align*}
		&
		\mu^k
		\sum_{\vec x'\in \mathbb{X}^{N}}
		\sum_{\vec y'\in \mathbb{Y}^{N}_k}
		\,\prod_{i=1}^N
		\mathbf{P}_{x_{i-1}-x_{i-1}',x_{i-1}-x_i-1}(x_i-x_i')
		\prod_{i=1}^{N}
		\varphi_{q,q/\mu,\nu}
		\left( \sum\nolimits_{j=0}^{i-1}(y_j'-y_j)\;\Big\vert\;y_i \right)
		\prod_{r=0}^{N}q^{y_r'(x_r'+r)}
		\\&\hspace{2pt}
		=
		\mu^{\bar k}
		\sum_{\vec x'\in \mathbb{X}^{N-1}}
		\sum_{\vec y'\in \mathbb{Y}^{N-1}_{\bar k}}
		\prod_{i=1}^{N-1}\mathbf{P}_{x_{i-1}-x_{i-1}',x_{i-1}-x_i-1}(x_i-x_i')
		\prod_{i=1}^{N-1}\varphi_{q,q/\mu,\nu}\left( \sum\nolimits_{j=0}^{i-1}(y_j'-y_j)\;\Big\vert\;y_i \right)
		\prod_{r=0}^{N-1}q^{y_r'(x_r'+r)}
		\\
		&\hspace{80pt}\times
		\mu^{y}
		\sum_{s=0}^{y}\varphi_{q,q/\mu,\nu}(s\mid y)\, q^{s(x_{N-1}'+N-1)}
		\sum_{L=0}^{\infty}
		q^{(y-s)(x_N-L+N)}\,\mathbf{P}_{x_{N-1}-x_{N-1}',x_{N-1}-x_N-1}(L).
	\end{align*}
	Here we used the notation $s=y_N-y_N'=\sum_{j=0}^{N-1}(y_j'-y_j)$ and $L=x_N-x_N'$. The
	factor $q^{s(x_{N-1}'+N-1)}$ appears on the last line because in
	the sum over $\vec{y}\,'\in\mathbb{Y}^{N-1}_{\bar k}$ in the second line
	the quantity $y_{N-1}'$ is different from $y_{N-1}'$ in the first line:
	the former not take
	into account $s$ Boson particles coming from $y_N$.
	Continuing the computation, we can now apply \Cref{lemma:main_identity}
	with $\ell=x_{N-1}-x_{N-1}'$, $g=x_{N-1}-x_N-1$.
	By the induction hypothesis:
	\begin{align*}
		&=
		\mu^{\bar k}
		\sum_{\vec x'\in \mathbb{X}^{N-1}}
		\sum_{\vec y'\in \mathbb{Y}^{N-1}_{\bar k}}
		\prod_{i=1}^{N-1}\mathbf{P}_{x_{i-1}-x_{i-1}',x_{i-1}-x_i-1}(x_i-x_i')
		\prod_{i=1}^{N-1}\varphi_{q,q/\mu,\nu}\left( \sum\nolimits_{j=0}^{i-1}(y_j'-y_j)\;\Big\vert\;y_i \right)
		\prod_{r=0}^{N-1}q^{y_r'(x_r'+r)}
		\\
		&\hspace{82pt}\times
		q^{y(x_N+N)}
		\nu^y
		\sum_{t=0}^{y}
		q^{(x_{N-1}-x_N-1)t}\,
		\varphi_{q,q/\nu,\nu}(t\mid y)
		\\&
		=
		\nu^{k}\sum_{\vec y''\in \mathbb{Y}^{N-1}_{\bar k}}
		\sum_{t=0}^y
		\varphi_{q,q/\nu,\nu}(t\mid y)
		\prod_{i=1}^{N-1}\varphi_{q,q/\nu,\nu}
		\biggl( \sum\limits_{j=0}^{i-1}(y_j''-y_j)\;\Big\vert\;y_i \biggr)
		q^{t(x_{N-1}+N-1)+(y-t)(x_N+N)}
		\prod_{r=0}^{N-1}q^{y_r''(x_r+r)},
	\end{align*}
	where $t=y_N-y_N''=\sum_{j=0}^{N-1}(y_j''-y_j)$.
	We can rewrite the sum over $\vec y\,''\in \mathbb{Y}^{N-1}_{\bar k}$ and $t$
	as a sum over $\vec y\,''\in \mathbb{Y}^N_k$, thus arriving at the right-hand
	side of \eqref{push_duality_formulation_equation_detail}.
	Throughout the whole computation, all infinite
	series converge for sufficiently small $\mu$
	because all $y_i$, $y_i'$, $y_i''$ are bounded.
	This completes the proof of the duality modulo \Cref{lemma:main_identity} which we establish
	in the next subsection.
\end{proof}

\subsection{Proof of \Cref{lemma:main_identity}}
\label{sub:lemma_proof}

First, note that the infinite sum
over $L$ in the right-hand side of \eqref{main_identity_equation}
is the only part of this identity which can bring convergence issues.
However, because the $\psi$ term in \eqref{P_ell_g_update_probability_definition}
contains $\mu^{L}$, the sum over $L$ in
\eqref{main_identity_equation} indeed converges for sufficiently small $\mu$.

\medskip\noindent\textbf{Step 1} (A rational identity).
We start with the right-hand side of \eqref{main_identity_equation},
and rewrite the sum over $L$ as
\begin{equation}
	\label{main_identity_equation_proof1}
	\begin{split}
		&
		\sum_{L=0}^{\infty}q^{-L(y-s)}\mathbf{P}_{\ell,g}(L)=
		\sum_{L=0}^{\infty}
		\sum_{p=0}^{\min\{\ell, L\}}
		q^{-L(y-s)}
		\varphi_{q^{-1}, q^{g}, \mu \nu q^{g-1}}(p \mid \ell)
		\,
		\psi_{q, \nu \mu^{-1} q^{p}, \nu q^{g}, \nu^{2} q^{g+p}}(L-p)
		\\&
		\hspace{130pt}=
		\sum_{p=0}^{\ell}
		\sum_{m=0}^{\infty}
		q^{-(m+p)(y-s)}
		\varphi_{q^{-1}, q^{g}, \mu \nu q^{g-1}}(p \mid \ell)
		\,
		\psi_{q, \nu \mu^{-1} q^{p}, \nu q^{g}, \nu^{2} q^{g+p}}(m).
	\end{split}
\end{equation}
We use one of Heine's transformation formulas
\cite[(III.2)]{GasperRahman},
\begin{equation*}
	_2\phi_1
	\left[
		\begin{array}{cc} a & b \\ \multicolumn{2}{c}{c} \end{array};
		q, z
	\right]
	=
	\frac{(c/b;q)_{\infty}(bz;q)_{\infty}}{(c;q)_{\infty}(z;q)_{\infty}}\,
	{}_2\phi_1
	\left[
		\begin{array}{cc} abz/c & b \\ \multicolumn{2}{c}{bz} \end{array};
		q, c/b
	\right],
\end{equation*}
with $a=\nu q^g$, $b=\nu\mu^{-1}q^p$, $c=\nu^2q^{g+p}$, and $z=\mu q^{s-y}$,
to rewrite
\begin{equation}\label{main_identity_equation_proof2}
	\mathrm{RHS}\,
	\eqref{main_identity_equation_proof1}
	=
	\sum_{p=0}^{\ell}
	q^{p(s-y)}
	\varphi_{q^{-1}, q^{g}, \mu \nu q^{g-1}}(p \mid \ell)
	\,
	\frac{(\nu q^{p+s-y};q)_{y-s}}
	{(\mu q^{s-y};q)_{y-s}}
	\sum_{r=0}^{y-s}
	\bigl(\mu\nu q^{g}\bigr)^r\,
	\frac{(q^{s-y};q)_r (\nu\mu^{-1}q^p;q)_r}{(\nu q^{p+s-y};q)_r(q;q)_r}.
\end{equation}
The advantage is that now the $q$-hypergeometric sum over $r$ terminates.
Thus, to prove the desired identity
\eqref{main_identity_equation} we need to establish the following,
\begin{equation}
	\label{main_identity_equation_proof3}
	\begin{split}
		&
		\sum_{t=0}^{y}
		\frac{(q; q)_{y}}{(q; q)_{t}(q; q)_{y-t}}\,
		q^{(g+1)t}
		\nu^{y-t} (\nu^{-1}q; q)_{y-t}
		(\nu^{2}q^{-1}; q)_{t}
		\\&\hspace{5pt}=
		\sum_{s=0}^{y}
		\sum_{r=0}^{y-s}
		\sum_{p=0}^{\ell}
		\frac{(q; q)_{y}}{(q; q)_{s}(q; q)_{y-s}}\,
		q^{(g+1-\ell)s}
		(\mu \nu q^{g-p})^{y-s}
		(\nu^{-1} q^{-g})^{y-s-r}
		\\
		&\hspace{80pt}
		\times
		(\mu^{-1}q; q)_{y-s}
		(\mu \nu q^{-1}; q)_{s}\,
		\varphi_{q^{-1}, q^{g}, \mu \nu q^{g-1}}(p \mid \ell)\,
		\varphi_{q,  \nu \mu^{-1} q^{p}, \mu^{-1} q}(y-s-r \mid y-s).
	\end{split}
\end{equation}
where we packed parts of \eqref{main_identity_equation_proof2}
into the second
$\varphi$ expression.
Note that now this is an identity of rational functions not involving infinite summation,
so we do not need to worry about convergence.

\medskip\noindent\textbf{Step 2} (Induction base).
We will prove \eqref{main_identity_equation_proof3}
by induction on $\ell$, but this requires a number of additional transformations.
The base of the induction $\ell=0$ is
\begin{multline*}
	\sum_{t=0}^{y}
	\frac{(q; q)_{y}}{(q; q)_{t}(q; q)_{y-t}}\,
	q^{(g+1)t}
	\nu^{y-t} (\nu^{-1}q; q)_{y-t}
	(\nu^{2}q^{-1}; q)_{t}
	\\
	=
	\sum_{t=0}^{y}
	\frac{(q; q)_{y}}{(q; q)_{t}(q; q)_{y-t}}\,
	q^{(g+1)t} (\nu^{-1}q; q)_{y-t}
	\sum_{r=0}^{t} \nu^{y-t} (\nu^{2}q^{-1}; q)_{t} \,
	\varphi_{q, \mu \nu q^{-1}, \nu^{2}q^{-1}}(r\mid t),
\end{multline*}
where we have set $t=r+s$ in the right-hand side.
This holds because the $\varphi$'s sum to one, cf.
\eqref{phi_qmunu_sum}.

\medskip\noindent\textbf{Step 3} (Setting $\nu=q^{-x}$).
We now turn to the induction step in the proof of \eqref{main_identity_equation_proof3}.
For any $\ell\ge0$,
both sides of this identity are rational functions in $\nu$,
so it suffices to prove the identity for infinitely many values of $\nu$.
We will show it for $\nu = q^{-x}$ for large enough positive integers $x$.
Use the ``self-duality'' property of $\varphi$
\eqref{phi_qmunu_interchange} to write
\begin{equation}
	\label{main_identity_equation_proof4_RHS_rewritten}
	\begin{split}
		\mathrm{RHS}\,\eqref{main_identity_equation_proof3}
		&=
		\sum_{s=0}^{y}
		\sum_{p=0}^{\ell}
		\frac{(q; q)_{y}}{(q; q)_{s}(q; q)_{y-s}}\,
		q^{(g+1-\ell)s}
		(\mu q^{g-p-x})^{y-s}
		(\mu q^{-x-1}; q)_{s}\,
		\varphi_{q^{-1}, q^{g}, \mu q^{-x+g-1}}(p \mid \ell)
		\\
		&\hspace{80pt}
		\times
		(\mu^{-1}q; q)_{y-s}
		\sum_{d=0}^{x-g}
		q^{(y-s)d}
		\varphi_{q,  \mu^{-1} q^{p-x}, \mu^{-1} q}(d \mid x-g).
	\end{split}
\end{equation}

\medskip\noindent\textbf{Step 4} (A $q$-exponential generating series).
Denote
\begin{equation*}
	\Pi(\alpha,\beta):={}_1\phi_0(\beta;q,\alpha)=
	\sum_{y=0}^{+\infty}\frac{(\beta;q)_y}{(q;q)_y}\alpha^y
	=
	\frac{(\alpha \beta;q)_{\infty}}{(\alpha;q)_\infty}
\end{equation*}
(the last equality is the $q$-binomial theorem).
Note that
\begin{equation}
	\label{Pi_properties}
	\Pi(\alpha,\beta)^{-1}=\Pi(\alpha\beta,1/\beta),
	\qquad
	\Pi(\alpha/q,\beta)-\Pi(\alpha,\beta)=
	(1-\beta)\frac{\alpha}{q}\,\Pi(\alpha/q,\beta q).
\end{equation}

Let $\chi$ be a formal parameter.
Multiply both sides of the desired identity
\begin{equation*}
	\mathrm{LHS}\,\eqref{main_identity_equation_proof3}=\mathrm{RHS}\,\eqref{main_identity_equation_proof4_RHS_rewritten}
\end{equation*}
by $\chi^y/(q;q)_y$ and sum over $y$ from $0$ to $+\infty$.
We obtain the following identity that we need to establish:
\begin{equation}\label{main_identity_equation_proof5}
	\begin{split}
		&
		\Pi(\chi q^{g+1},q^{-2x-1})
		\Pi(\chi q^{-x},q^{x+1})
		\Pi(\chi \mu q^{g-x-\ell},\mu^{-1} q^{x+1})
		\\
		&\hspace{20pt}
		=
		\sum_{z=0}^{\infty}
		\frac{\chi^{z}(\mu^{-1}q; q)_{z}}{(q; q)_{z}}
		\sum_{p=0}^{\ell}
		(\mu q^{g-p-x})^{z}\,
		\varphi_{q^{-1}, q^{g}, \mu q^{-x+g-1}}(p \mid \ell)
		\sum_{d=0}^{x-g}
		q^{z d}\,
		\varphi_{q,  \mu^{-1} q^{p-x}, \mu^{-1} q}(d \mid x-g)
		.
	\end{split}
\end{equation}
The third factor $\Pi$ in the left-hand side
of
\eqref{main_identity_equation_proof5}
arises by applying
the first identity in \eqref{Pi_properties}
to $\Pi(\chi q^{g+1-\ell},\mu q^{-x-1})$
coming from
\eqref{main_identity_equation_proof4_RHS_rewritten}.

\medskip\noindent\textbf{Step 5} (Recursion in $\ell$ for the left-hand side of \eqref{main_identity_equation_proof5}).
Denote the
left-hand side of \eqref{main_identity_equation_proof5}
by $L(\mu,\ell)$. The second identity in \eqref{Pi_properties} implies that
\begin{equation}\label{main_identity_equation_proof6_recursion_relation_LHS}
	L(\mu,\ell+1)-L(\mu,\ell)=
	(1-\mu^{-1}q^{x+1})\chi\mu q^{g-\ell-x-1}
	L(\mu/q,\ell).
\end{equation}
To prove the inductive step
$\ell\to\ell+1$
it now suffices to verify the same recursion relation
for the right-hand side of
\eqref{main_identity_equation_proof5}.

\medskip\noindent\textbf{Step 6} (Recursion in $\ell$ for the right-hand side of \eqref{main_identity_equation_proof5}).
We now aim to
check that the right-hand side of \eqref{main_identity_equation_proof5}
satisfies recursion \eqref{main_identity_equation_proof6_recursion_relation_LHS}.
We will perform this check for each coefficient by $\chi^z$
separately. That is, we need to show that for every fixed
$z\in \mathbb{Z}_{\ge0}$,
\begin{align}
	\nonumber
		&
		\sum_{p=0}^{\ell+1}
		q^{-pz}
		\,
		\varphi_{q^{-1}, q^{g}, \mu q^{-x+g-1}}(p \mid \ell+1)
		\sum_{d=0}^{x-g}
		q^{z d}\,
		\varphi_{q,  \mu^{-1} q^{p-x}, \mu^{-1} q}(d \mid x-g)
		\\
		&\nonumber
		\hspace{60pt}-
		\sum_{p=0}^{\ell}
		q^{-pz}
		\,
		\varphi_{q^{-1}, q^{g}, \mu q^{-x+g-1}}(p \mid \ell)
		\sum_{d=0}^{x-g}
		q^{z d}\,
		\varphi_{q,  \mu^{-1} q^{p-x}, \mu^{-1} q}(d \mid x-g)
		\\&
		\label{main_identity_equation_proof7}
		=
		(1-\mu^{-1}q^{x+1})q^{-\ell}\,
		\frac{q^{-z}-1}{1-\mu^{-1}q}
		\\&\nonumber
		\hspace{80pt}
		\times
		\sum_{p=0}^{\ell}
		q^{-p(z-1)}\,\varphi_{q^{-1},q^g,\mu q^{-x+g-2}}(p\mid \ell)
		\sum_{d=0}^{x-g}
		q^{(z-1)d}\,\varphi_{q,\mu^{-1}q^{p-x+1},\mu^{-1}q^2}(d\mid x-g)
		.
\end{align}
Note that the factor $\chi$ in the right-hand side of \eqref{main_identity_equation_proof6_recursion_relation_LHS}
leads to a shift $z\mapsto z-1$ which combined
with the $q$-shifting of $\mu$
brings certain extra terms
into the right-hand side of the above identity.

\medskip\noindent\textbf{Step 7} (Comparing coefficients by $q^{kz}$).
It now suffices to show that the coefficients by $q^{kz}$ for all
$k\in \mathbb{Z}$ in both sides of \eqref{main_identity_equation_proof7}
are the same.
Let us also set $\gamma=q^{-g}$, this will later serve as
a generic parameter.
This leads to the following identity to be checked:
\begin{equation}
	\label{main_identity_equation_proof8}
	\begin{split}
		&
		\sum_{p=0}^{\ell+1}
		\Bigl(
			\varphi_{q^{-1}, \gamma^{-1}, \mu \gamma^{-1}q^{-x-1}}(p \mid \ell+1)
			\,
			\varphi_{q,  \mu^{-1} q^{p-x}, \mu^{-1} q}(p+k \mid x-g)
			\\&\hspace{140pt}
			-
			\varphi_{q^{-1}, \gamma^{-1}, \mu \gamma^{-1} q^{-x-1}}(p \mid \ell)
			\,
			\varphi_{q,  \mu^{-1} q^{p-x}, \mu^{-1} q}(p+k \mid x-g)
		\Bigr)
		\\&
		=
		\frac{(1-\mu^{-1}q^{x+1})q^{-\ell-k}}{1-q/\mu}
		\sum_{p=0}^{\ell+1}
		\Bigl(
			q^{-1}
			\varphi_{q^{-1},\gamma^{-1},\mu\gamma^{-1} q^{-x-2}}(p\mid \ell)
			\,
			\varphi_{q,\mu^{-1}q^{p-x+1},\mu^{-1}q^2}(p+k+1\mid x-g)
			\\&\hspace{140pt}
			-
			\varphi_{q^{-1},\gamma^{-1},\mu\gamma^{-1} q^{-x-2}}(p\mid \ell)
			\,
			\varphi_{q,\mu^{-1}q^{p-x+1},\mu^{-1}q^2}(p+k\mid x-g)
		\Bigr).
	\end{split}
\end{equation}
Simplifying this identity
and
rewriting it in
a
$q$-hypergeometric notation (cf. \eqref{q_hyp_defn}) for $k\ge0$
(the case $k\le 0$ is considered in a similar manner),
we obtain
\begin{equation}
	\begin{split}
		\label{main_identity_equation_proof9}
	&
	\frac{q^{k+1} \left(1-\gamma  q^l\right) \left(1-\mu  q^x\right)}{\left(\mu-q^{x+1} \right) \left(\gamma-\mu  q^k \right)}
	\,
	{_{4}}\phi_{3} \left[
	\begin{array}{cccc}
		q^{\ell+1} & \mu q^{-x-1} & q^x & \gamma q^{x-k}
		\\
		&\mu q^x  & \gamma q^\ell & q^{-k-1}
	\end{array}
	; q^{-1}, q^{-1} \right]
	\\
	&\hspace{40pt}
	-
	\frac{q^{k-x} \left(1-\mu  q^x\right) \left(\mu-\gamma  q^{l+x+1} \right)}{\left(\mu-q^{x+1} \right) \left(\gamma-\mu  q^k \right)}
	\,
	{_{4}}\phi_{3} \left[
	\begin{array}{cccc}
		q^{\ell} & \mu q^{-x-1} & q^x & \gamma q^{x-k}
		\\
		&\mu q^x  & \gamma q^{\ell-1} & q^{-k-1}
	\end{array}
	; q^{-1}, q^{-1} \right]
	\\&
	=
	\frac{q^{-x} \left(1-q^{k+x+1}\right) \left(\gamma  q^x-q^k\right)}{\left(1-q^{k+1}\right) \left(\gamma -\mu  q^k\right)}
	\,
		{_{4}}\phi_{3} \left[
		\begin{array}{cccc}
			q^{\ell} & \mu q^{-x-2} & q^x & \gamma q^{x-k-1}
			\\
			&\mu q^{x-1}  & \gamma q^{\ell-1} & q^{-k-2}
		\end{array}
		; q^{-1}, q^{-1} \right]
		\\
		&\hspace{90pt}
		-
		{_{4}}\phi_{3} \left[
		\begin{array}{cccc}
			q^{\ell} & \mu q^{-x-2} & q^x & \gamma q^{x-k}
			\\
			&\mu q^{x-1}  & \gamma q^{\ell-1} & q^{-k-1}
		\end{array}
	; q^{-1}, q^{-1} \right].
	\end{split}
\end{equation}
In passing from \eqref{main_identity_equation_proof8} to
\eqref{main_identity_equation_proof9}
we have assumed that $\gamma$ is not an integer power of $q$:
as both sides of \eqref{main_identity_equation_proof8}
are rational in $\gamma$, it suffices to establish \eqref{main_identity_equation_proof8}
for infinitely many values of $\gamma$.
Note that all the $q$-hypergeometric series in \eqref{main_identity_equation_proof9} are terminating.

\medskip\noindent\textbf{Step 8} (Extension and proof of the $q$-hypergeometric identity).
To establish \eqref{main_identity_equation_proof9},
consider its extension for incomplete $q$-hypergeometric series:
\begin{equation*}
	{_{4}}\phi_{3}^{[p]} \left[
		\begin{array}{cccc}
			a & b & c & d
			\\
			&e & f & g
		\end{array}
	; q, z \right]:=
	\sum_{n=0}^{p}
	\frac{(a;q)_p(b;q)_p(c;q)_p(d;q)_p}{(q;q)_p(e;q)_p(f;q)_p(g;q)_p}z^n.
\end{equation*}
Then the right-hand side of the analogue
of \eqref{main_identity_equation_proof9} is nonzero but can be explicitly
computed:
\begin{align}
		&\nonumber
		\frac{q^{k+1} \left(1-\gamma  q^l\right) \left(1-\mu  q^x\right)}{\left(\mu-q^{x+1} \right) \left(\gamma-\mu  q^k \right)}
		\,
		{_{4}}\phi_{3}^{[p]} \left[
		\begin{array}{cccc}
			q^{\ell+1} & \mu q^{-x-1} & q^x & \gamma q^{x-k}
			\\
			&\mu q^x  & \gamma q^\ell & q^{-k-1}
		\end{array}
		; q^{-1}, q^{-1} \right]
		\\ \nonumber
		&\hspace{40pt}
		-
		\frac{q^{k-x} \left(1-\mu  q^x\right) \left(\mu-\gamma  q^{l+x+1} \right)}{\left(\mu-q^{x+1} \right) \left(\gamma-\mu  q^k \right)}
		\,
		{_{4}}\phi_{3}^{[p]} \left[
		\begin{array}{cccc}
			q^{\ell} & \mu q^{-x-1} & q^x & \gamma q^{x-k}
			\\
			&\mu q^x  & \gamma q^{\ell-1} & q^{-k-1}
		\end{array}
		; q^{-1}, q^{-1} \right]
		\\&
		\label{main_identity_equation_proof10}
		\hspace{40pt}
		-
		\frac{q^{-x} \left(1-q^{k+x+1}\right) \left(\gamma  q^x-q^k\right)}{\left(1-q^{k+1}\right) \left(\gamma -\mu  q^k\right)}
		\,
		{_{4}}\phi_{3}^{[p]} \left[
		\begin{array}{cccc}
			q^{\ell} & \mu q^{-x-2} & q^x & \gamma q^{x-k-1}
			\\
			&\mu q^{x-1}  & \gamma q^{\ell-1} & q^{-k-2}
		\end{array}
		; q^{-1}, q^{-1} \right]
		\\ \nonumber
		&\hspace{40pt}
		+
		{_{4}}\phi_{3}^{[p]} \left[
		\begin{array}{cccc}
			q^{\ell} & \mu q^{-x-2} & q^x & \gamma q^{x-k}
			\\
			&\mu q^{x-1}  & \gamma q^{\ell-1} & q^{-k-1}
		\end{array}
	; q^{-1}, q^{-1} \right]
	\\& \nonumber
	\hspace{80pt}
		=\frac{q^{k+1} (q^\ell;q^{-1})_p (q^x;q^{-1})_{p+1} (q^{-x-1} \mu ;q^{-1})_{p+1}
		(q^{x-k} \gamma ; q^{-1})_{p+1}}
		{(\gamma-\mu  q^k)(\mu-q^{x+1})(q^{-1};q^{-1})_p
		 (q^{-k-1};q^{-1})_{p+1}
		 (q^{\ell-1} \gamma ;q^{-1})_p (q^{x-1} \mu ;q^{-1})_p}.
\end{align}
This last identity is readily proven by induction
on $p$. Indeed, both
\begin{equation*}
	\textnormal{
	$\mathrm{LHS}\,\eqref{main_identity_equation_proof10}\,[p+1]-\mathrm{LHS}\,\eqref{main_identity_equation_proof10}\,[p]$
	\qquad and\qquad
	$\mathrm{RHS}\,\eqref{main_identity_equation_proof10}\,[p+1]-\mathrm{RHS}\,\eqref{main_identity_equation_proof10}\,[p]$
	}
\end{equation*}
are simple sums of
ratios of $q$-Pochhammer symbols,
and their equality is checked directly.
Taking any $p\ge\ell+1$ makes the right-hand side of
\eqref{main_identity_equation_proof10} vanish,
and leads to \eqref{main_identity_equation_proof9}.

\medskip

This completes the proof of
\Cref{lemma:main_identity}.\qed

\subsection{Contour integral observables}
\label{sub:push_contour_integrals}

In this subsection we utilize the
duality of \Cref{thm:push_duality}
to obtain nested contour integral formulas for the
$q$-moments of the $q$-Hahn PushTASEP.
Fix $k\ge1$, and denote
$\vec n=(n_1\ge \ldots\ge n_k\ge0 )$ with $n_1\le N$.
Consider the joint moment
\begin{equation}
	\label{push_q_moment_definition}
	u(t;\vec n):=
	\mathop{\mathbb{E}}
	\biggl[
		\prod_{j=1}^{k}
		q^{x_{n_j}(t)+n_j}
	\biggr]
	,
	\qquad
	t=0,1,2,\ldots ,
\end{equation}
where the $q$-Hahn PushTASEP starts from the step initial
configuration $x_i(0)=-i$, $i=1,2,\ldots$.
First, let us deal with convergence of the expectation \eqref{push_q_moment_definition}.

\begin{lemma}
	\label{lemma:finiteness_q_moments}
	When $0<\mu<q^k$ and the other $q$-Hahn PushTASEP parameters satisfy
	\eqref{mu_nu_parameters_for_qHahn}, the $q$-moment
	$u(t;\vec n)$ is finite for all $t\in \mathbb{Z}_{\ge0}$.
\end{lemma}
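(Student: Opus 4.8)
The plan is to bound $u(t;\vec n)$ by a single exponential moment of the displacement of one particle, and then control that moment by a cascade along the particle index, using that at each step the jump law has a geometric-type ($\mu^{L}$) tail. Throughout set $\rho:=q^{-k}$, and note that the hypothesis $\mu<q^{k}$ is precisely $\rho\mu<1$; write $D_{i}(s):=x_{i}(0)-x_{i}(s)=-i-x_{i}(s)\ge0$ for the total (leftward) displacement of particle $i$ by time $s$. Since $x_{i}(s)-x_{i+1}(s)\ge1$, the quantity $a_{i}(s):=x_{i}(s)+i$ is non-increasing in $i$, hence $q^{a_{i}(s)}$ is non-decreasing in $i$ (as $0<q<1$); because $n_{j}\le n_{1}$ this gives $\prod_{j=1}^{k}q^{a_{n_{j}}(t)}\le q^{k\,a_{n_{1}}(t)}$, and for step initial data $a_{n_{1}}(t)=-D_{n_{1}}(t)$. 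Thus $u(t;\vec n)\le\mathop{\mathbb{E}}\bigl[\rho^{D_{n_{1}}(t)}\bigr]$, and it suffices to prove $\mathop{\mathbb{E}}[\rho^{D_{i}(t)}]<\infty$ for all $i\ge1$, $t\ge0$. (For $n_{1}=0$ the moment vanishes by the convention $x_{0}\equiv+\infty$.)

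The key step is a per-step estimate: for every $\rho\in[1,\mu^{-1})$ there is $K=K(q,\mu,\nu,\rho)<\infty$ with $\sum_{L\ge0}\rho^{L}\mathbf{P}_{\ell,g}(L)\le K\rho^{\ell}$ for all $\ell,g\in\mathbb{Z}_{\ge0}$. I would derive this from the manifestly nonnegative representations \eqref{P_ell_g_rewrite_first_case_87} (case $\ell<g$) and \eqref{P_ell_g_rewrite_second_case_87} (case $\ell\ge g$), in which $\mathbf{P}_{\ell,g}(L)$ equals a nonnegative $L$-independent prefactor times $\mu^{L}$ (respectively $\mu^{L-\ell+g}$) times a ratio of $q$-Pochhammer symbols times a terminating ${_{8}}\phi_{7}$ with all summands nonnegative. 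Since $\mathbf{P}_{\ell,g}(L)=0$ unless $L\ge\ell-g$, one substitutes $L=\max(0,\ell-g)+M$, so $\rho^{L}\le\rho^{\ell}\rho^{M}$ (as $\rho\ge1$), and checks that the coefficient of $\mu^{M}$ in $\mathbf{P}_{\ell,g}(\max(0,\ell-g)+M)$ is bounded uniformly in $\ell,g,M$: the $q$-binomial-type factors are bounded by a power of $(q;q)_{\infty}^{-1}$; the remaining $q$-Pochhammer factors $(\nu q^{g};q)_{L},(\nu\mu^{-1};q)_{L},\dots$ and their reciprocals are bounded because all bases lie in $(-1,1)$ (here one uses $|\nu|\le\mu$, whence $|\nu\mu^{-1}|\le1$), the $\nu^{\pm\ell}$-type contributions from negatively-indexed Pochhammers cancelling in pairs; and inside the ${_{8}}\phi_{7}$ the $L$-dependent growth is cancelled by the $L$-dependent argument $q^{L+g+1}/\mu$, leaving a sum dominated termwise by a convergent series independent of $\ell,g,L$. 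Summing the geometric series in $M$ (convergent since $\rho\mu<1$) gives the bound. An equivalent starting point is the finite-sum expression for $\sum_{L}\rho^{L}\mathbf{P}_{\ell,g}(L)$ obtained by Heine's transformation exactly as in \eqref{main_identity_equation_proof2}. This verification --- that the product of the relevant $q$-Pochhammer factors stays bounded as $\ell,g\to\infty$ for small $\mu$ --- is the main obstacle; the rest is bookkeeping.

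Given the per-step estimate, let $L_{i,s}$ be the displacement of particle $i$ at step $s$, so $D_{i}(t)=\sum_{s=1}^{t}L_{i,s}$. Fix $i\ge2$ and let $\mathcal{G}_{s}$ be the $\sigma$-algebra generated by the whole trajectory of particle $i-1$ together with everything about the process up to step $s$; then the gap $x_{i-1}(s-1)-x_{i}(s-1)-1$ and the push $L_{i-1,s}$ are $\mathcal{G}_{s-1}$-measurable, while the conditional law of $L_{i,s}$ given $\mathcal{G}_{s-1}$ is $\mathbf{P}_{L_{i-1,s},\,\cdot}$, so $\mathop{\mathbb{E}}[\rho^{L_{i,s}}\mid\mathcal{G}_{s-1}]\le K\rho^{L_{i-1,s}}$. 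Peeling off the factors $\rho^{L_{i,t}},\rho^{L_{i,t-1}},\dots$ one at a time (at each stage the already-exposed factors are $\mathcal{G}$-measurable) yields $\mathop{\mathbb{E}}[\rho^{D_{i}(t)}]\le K^{t}\mathop{\mathbb{E}}[\rho^{D_{i-1}(t)}]$. Iterating down to $i=1$ reduces everything to $\mathop{\mathbb{E}}[\rho^{D_{1}(t)}]$; particle $1$ makes $t$ i.i.d.\ jumps with law $\varphi_{q,\mu,\nu}(\cdot\mid\infty)$, and by the $q$-binomial theorem $\sum_{\ell\ge0}\rho^{\ell}\varphi_{q,\mu,\nu}(\ell\mid\infty)=\frac{(\mu;q)_{\infty}}{(\nu;q)_{\infty}}\cdot\frac{(\nu\rho;q)_{\infty}}{(\mu\rho;q)_{\infty}}<\infty$ precisely because $\mu\rho<1$. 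Hence $\mathop{\mathbb{E}}[\rho^{D_{1}(t)}]<\infty$, so $\mathop{\mathbb{E}}[\rho^{D_{n_{1}}(t)}]\le K^{(n_{1}-1)t}\mathop{\mathbb{E}}[\rho^{D_{1}(t)}]<\infty$, and the reduction in the first paragraph completes the proof.
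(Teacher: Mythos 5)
Your argument is correct in structure and takes a genuinely different (and considerably more explicit) route than the paper. The paper's own proof is two sentences: it asserts that the one-step $N$-particle transition probability is at most $\mathsf{polynomial}(\vec x-\vec x')\cdot \mu^{\sum_{i}(x_i-x_i')}$ and then sums this against $\prod_j q^{x'_{n_j}+n_j}$. You instead (i) dominate $u(t;\vec n)$ by the single exponential moment $\mathop{\mathbb{E}}\bigl[\rho^{D_{n_1}(t)}\bigr]$, $\rho=q^{-k}$, using monotonicity of $x_i+i$ in $i$; (ii) isolate the per-step bound $\sum_L\rho^L\mathbf{P}_{\ell,g}(L)\le K\rho^{\ell}$ uniformly in $\ell,g$; and (iii) cascade it along particle indices by conditioning on the trajectory of particle $i-1$ (legitimate, since particles $1,\dots,i-1$ evolve autonomously and particle $i$ uses independent randomness), ending at particle $1$ where the $q$-binomial theorem gives finiteness exactly for $\rho\mu<1$. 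Your bookkeeping is in fact sharper than the paper's: a bound of the form $\mathsf{polynomial}\cdot\mu^{\sum_i(x_i-x_i')}$ cannot be literally correct, because a push across a small gap transmits the displacement of particle $i-1$ to particle $i$ at no cost in $\mu$ (from a packed configuration, the event $(L_1,L_2)=(\ell,\ell)$ has probability of order $\mu^{\ell}$, not $\mu^{2\ell}$); what is true, and what either argument really needs, is exactly the statement you isolate, namely $\mathbf{P}_{\ell,g}(\max(0,\ell-g)+M)\le C\mu^{M}$ uniformly, i.e.\ decay at rate $\mu$ only in the displacement in excess of the forced push.

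That uniform estimate is the one place where you stop at a sketch, and it does need to be written out, but the route you indicate goes through. For $-1<\nu\le 0$ you do not even need the ${}_{8}\phi_{7}$ representations: in \eqref{P_ell_g_update_probability_definition} both factors are then genuine probabilities, and $\psi_{q,\nu\mu^{-1}q^p,\nu q^g,\nu^2q^{g+p}}(m)\le C\mu^m$ uniformly in $p,g,m$, since each finite $q$-Pochhammer appearing has argument of modulus at most $\max\{|\nu|/\mu,1\}$ and is therefore bounded by a fixed infinite product, while the normalizing infinite products are bounded away from zero by constants depending only on $(q,\mu,\nu)$; summing over $p\le\ell$ with weight $\rho^p\le\rho^\ell$ gives the bound. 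For $0<\nu\le\min\{\mu,\sqrt q\}$, in \eqref{P_ell_g_rewrite_first_case_87} and \eqref{P_ell_g_rewrite_second_case_87} the ratios of negatively-shifted Pochhammers are at most $1$ factor by factor (an elementary monotonicity check valid for $0<\nu<1$), the remaining numerator Pochhammers are bounded by absolute constants, the denominators are bounded below by fixed infinite products such as $(\mu\nu;q)_\infty$, $(q;q)_\infty$, $(\nu;q)_\infty$, $(\nu^2;q)_\infty$, and the $j$-th term of the terminating ${}_{8}\phi_{7}$ is at most $C^{j}q^{j^{2}}\mu^{-j}$ uniformly in $\ell,g,L$, because the growth of $(q^{-\ell};q)_j(q^{-L};q)_j$ (respectively $(q^{-g};q)_j(q^{-L+\ell-g};q)_j$) is absorbed by the argument $\bigl(q^{L+g+1}/\mu\bigr)^{j}$; the boundary case $\nu=\mu$ is handled by the cancellation $(\nu\mu^{-1};q)_L/(\nu/\mu;q)_j=(\nu\mu^{-1}q^{j};q)_{L-j}$. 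With these estimates filled in, your proof is complete, and it makes transparent where the threshold $\mu<q^k$ enters (one geometric series per particle and per time step), which the paper's proof leaves implicit.
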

\begin{proof}
	By the definition of the process in \Cref{sub:def_nonnegativity},
	the $q$-Hahn pushTASEP one-step
	transition probability $\vec x\to \vec x'$ can be bounded from above by
	$\mathsf{polynomial}(\vec x-\vec x')\cdot \mu^{\sum_{i=1}^{N}(x_i-x_i')}$.
	Multiplying this estimate by
	$\prod_{j=1}^{k}
	q^{x_{n_j}(t)+n_j}$
	and summing over $\vec x'$ (which can take arbitrarily large negative values)
	we get a finite sum if $\mu<q^k$.
\end{proof}
The bound $\mu<q^k$ in \Cref{lemma:finiteness_q_moments}
cannot be relaxed as the $k$-th $q$-moment of the first particle after the first step
has the form
\begin{equation*}
	\mathop{\mathbb{E}}\bigl[q^{k (x_1(1)+1)}\bigr]
	=
	\sum_{\ell=0}^{\infty}q^{-k\ell}\,\varphi_{q,\mu,\nu}(\ell\mid\infty)
	=
	\frac{(\mu;q)_{\infty}}{(\nu;q)_{\infty}}
	\sum_{\ell=0}^{\infty}
	\left(
		\mu q^{-k}
	\right)^{\ell}
	\frac{(\nu/\mu;q)_{\ell}}{(q;q)_{\ell}}
	=
	\frac{(\nu q^{-k};q)_k}{(\mu q^{-k};q)_{k}}
	=
	\frac{(\nu/q;q^{-1})_k}{(\mu/q;q^{-1})_k},
\end{equation*}
and this series converges
only when $|\mu q^{-k}|<1$.

\begin{remark}
	\Cref{lemma:finiteness_q_moments}
	implies that
	in
	\Cref{thm:push_duality}
	and
	\Cref{lemma:induciton_base,lemma:main_identity} we can take
	$\mu_0=q^k$.
\end{remark}

\begin{theorem}
	\label{thm:push_q_moments_in_the_text}
	When $0<\mu<q^k$ and the other $q$-Hahn PushTASEP parameters satisfy
	\eqref{mu_nu_parameters_for_qHahn},
	we have
	\begin{multline}\label{push_q_moment_formula_in_the_text}
		u(t;\vec n)=
		\frac{(-1)^k q^{\frac{k(k-1)}{2}}}{(2\pi\sqrt{-1})^k}
		\oint\frac{dz_1}{z_1}
		\ldots
		\oint\frac{dz_k}{z_k}
		\prod_{1\le A<B\le k}
		\frac{z_A-z_B}{z_A-qz_B}
		\\\times
		\prod_{j=1}^{k}
		\Biggl[
		\left(
			\frac{1-\nu z_j}{1-z_j}
		\right)^{n_j}
		\left(
			\frac{1-\nu q^{-1}z_j^{-1}}
			{1-\mu q^{-1}z_j^{-1}}
		\right)^t
		\frac{1}{1-\nu z_j}
		\Biggr]
	\end{multline}
	for all $\vec n=(n_1\ge \ldots\ge n_k\ge0 )$.
	Here each contour for $z_A$ is a
	simple closed curve around $1$ which
	encircles the contour $qz_B$ for $B>A$, but not the points
	$\mu/q$ or $1/\nu$.
	See \Cref{fig:push_contours} for an illustration.
\end{theorem}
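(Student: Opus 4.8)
The plan is to derive the contour integral formula from the duality relation of \Cref{thm:push_duality} by the standard ``$q$-moments solve a nested free evolution equation'' strategy, exactly as in \cite{Corwin2014qmunu} for the $q$-Hahn TASEP. First I would fix $k$, encode $\vec n$ as an element $\vec y\in\mathbb{Y}^N_k$ via $y_m=\#\{i:n_i=m\}$ (with $y_0=0$), and note that by \Cref{lemma:finiteness_q_moments} the moment $u(t;\vec n)=\mathbb{E}[\mathfrak{H}_k(\vec x(t),\vec y)]$ is finite for $0<\mu<q^k$, so the duality \eqref{push_duality_formulation_equation} can legitimately be applied under the expectation. Applying $\mathsf{P}^{\textnormal{PushTASEP}}_{q,\mu,\nu}$ to $\mathfrak{H}_k$ and using \eqref{push_duality_formulation_equation} turns the one-step evolution of $u$ into the action of the two Boson operators: schematically, $\mu^k\,u(t+1;\cdot)(\mathsf{P}^{\textnormal{Boson}}_{q,q/\mu,\nu})^T=\nu^k\,u(t;\cdot)(\mathsf{P}^{\textnormal{Boson}}_{q,q/\nu,\nu})^T$, i.e. a linear recursion in $t$ for the vector $\big(u(t;\vec n)\big)_{\vec n}$ with $\vec n$ ranging over $k$-tuples. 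The key point is that this recursion is a \emph{true} (non-stochastic) linear system indexed only by $k$ and $\vec n$, not by $N$, because both Boson operators preserve $\mathbb{Y}^N_k$ and the $q$-Hahn PushTASEP on the first $N$ particles is autonomous.

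Next I would diagonalize this system by Bethe ansatz. The $q$-Hahn Boson operator is Bethe-ansatz solvable (this is the content underlying \Cref{thm:TASEP_moments} in \cite{Corwin2014qmunu}), so its eigenfunctions are the nested-contour plane-wave combinations $\sum_{\sigma\in S_k}(\text{scattering})\prod_j z_{\sigma(j)}^{n_j}$. The ansatz is that $u(t;\vec n)$ has the contour-integral form \eqref{push_q_moment_formula_in_the_text} and one must check three things: (i) it satisfies the two-sided Boson recursion obtained above; (ii) it satisfies the ``boundary''/two-body conditions encoded in the Boson operators' local structure (the $\varphi_{q,\alpha,\nu}$ hopping rules), which translate into the requirement that the only pole picked up when contracting the $z_A$ contour past $qz_B$ is governed by the factor $\frac{z_A-z_B}{z_A-qz_B}$; (iii) the initial condition: at $t=0$, step data $x_i(0)=-i$ gives $x_{n_j}(0)+n_j=0$, so $u(0;\vec n)=1$ for all $\vec n$, and one verifies that the right-hand side of \eqref{push_q_moment_formula_in_the_text} with $t=0$ evaluates to $1$ by residue computation (each $z_j$ integral over a contour around $1$ picks the residue at $z_j=1$, where $\left(\frac{1-\nu z_j}{1-z_j}\right)^{n_j}\frac{1}{1-\nu z_j}$ has a simple pole, and the $\prod\frac{z_A-z_B}{z_A-qz_B}$ factor together with the nesting makes all cross-terms vanish, leaving $1$). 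For the recursion step, the time-dependence enters only through $\prod_j\left(\frac{1-\nu q^{-1}z_j^{-1}}{1-\mu q^{-1}z_j^{-1}}\right)^t$, and one checks that multiplying the integrand by the single-particle eigenvalue ratio $\frac{1-\nu q^{-1}z_j^{-1}}{1-\mu q^{-1}z_j^{-1}}$ per variable implements precisely $\mu^{-k}\nu^k$ times the inverse of the $(\mathsf{P}^{\textnormal{Boson}}_{q,q/\mu,\nu})^T$-action composed with $(\mathsf{P}^{\textnormal{Boson}}_{q,q/\nu,\nu})^T$ on plane waves; this reduces to a one-variable identity for the $\varphi$ hopping kernels that is essentially \Cref{lemma:induciton_base}.

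The main obstacle I expect is justifying that the nested-contour ansatz is the \emph{unique} solution of the duality recursion with the given initial data, and correctly bookkeeping the contour prescription (``$z_A$ encircles $qz_B$ for $B>A$, but not $\mu/q$ or $1/\nu$'') so that the deformation arguments in steps (i)--(iii) are valid. Concretely: the Boson recursion determines $u(t+1;\cdot)$ from $u(t;\cdot)$ only after inverting the operator $(\mathsf{P}^{\textnormal{Boson}}_{q,q/\mu,\nu})^T$, which is upper-triangular with unit diagonal in a suitable ordering of $\mathbb{Y}^N_k$, hence invertible on this finite-dimensional space — so uniqueness holds, but one must argue the candidate formula genuinely lies in the solution space (finiteness of moments, no spurious contributions from the pole at $\mu/q$, which is exactly why the contour must exclude $\mu/q$ and why $\mu<q^k$ is needed for the $k$-fold integral to converge as $t$ grows). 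A clean way to organize this is to first prove the formula for $k=1$ directly (it is a single residue plus a geometric series, matching $\mathbb{E}[q^{k(x_1(1)+1)}]=\frac{(\nu/q;q^{-1})_k}{(\mu/q;q^{-1})_k}$ displayed after \Cref{lemma:finiteness_q_moments}), then induct on $k$ using the factorized structure of both sides, with the two-body scattering factor absorbing the interaction — mirroring the proof of \Cref{thm:TASEP_moments} in \cite{Corwin2014qmunu}, to which the bulk of the verification can be deferred by citing the parallel computation there with the single-particle eigenvalue replaced by $\frac{1-\nu q^{-1}z^{-1}}{1-\mu q^{-1}z^{-1}}$.
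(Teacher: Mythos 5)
Your proposal follows essentially the same route as the paper: the duality of \Cref{thm:push_duality} turns the $q$-moments $u(t;\vec n)$ into a solution of the two-sided Boson (``true'') evolution equations, the nested contour integral is shown to satisfy the same equations via free evolution plus two-body boundary conditions (the Bethe-ansatz structure you describe), and uniqueness with the common initial data gives equality --- the paper cites the spectral theory of the Boson operator for invertibility, while your triangularity argument on the finite set $\mathbb{Y}^N_k$ works equally well, though the diagonal entries are nonzero rather than unit. The only slips are cosmetic: $u(0;\vec n)=1$ holds for $n_k>0$ and vanishes when $n_k=0$, and the per-variable verification is not \Cref{lemma:induciton_base} but the elementary identity comparing $\nabla_{\mu-q,\,q-\mu\nu}$ and $\nabla_{\nu-q,\,q-\nu^2}$ acting on $\bigl(\tfrac{1-\nu z}{1-z}\bigr)^{n}$, which produces exactly the eigenvalue ratio $\tfrac{1-\nu q^{-1}z^{-1}}{1-\mu q^{-1}z^{-1}}$.
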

The condition $\mu<q^{-k}$ also implies the existence of the $k$
nested integration contours in \eqref{push_q_moment_formula_in_the_text}.

\begin{figure}[htpb]
	\centering
	\begin{tikzpicture}
		[scale=2, thick]
		\draw[->] (0,-1)--++(0,2);
		\draw[->] (-1,0)--++(5,0);
		\draw[fill] (2,0) circle (1pt) node[below,yshift=-2] {1};
		\draw[fill] (1.5,0) circle (1pt) node[below,yshift=-2] {$q$};
		\draw[fill] (3.4,0) circle (1pt) node[below,yshift=-2] {$1/\nu$};
		\draw[fill] (.5,0) circle (1pt) node[below,yshift=-2] {$\mu/q$};
		\draw (1.6,0) circle (.8);
		\node at (2,.8) {$z_1$};
		\draw (1.7,0) circle (.65);
		\draw (1.8,0) circle (.5);
		\node at (1.7,-.39) {$z_3$};
	\end{tikzpicture}
	\caption{Possible integration contours for the $q$-moments of the
	$q$-Hahn PushTASEP \eqref{push_q_moment_formula_in_the_text}
	with $k=3$.}
	\label{fig:push_contours}
\end{figure}

\begin{proof}[Proof of \Cref{thm:push_q_moments_in_the_text}]
	We establish this theorem by showing that
	both sides of \eqref{push_q_moment_formula_in_the_text} satisfy
	certain free evolution equations with two-body boundary conditions.
	This approach to obtaining $q$-moment formulas
	was applied for $q$-TASEPs and ASEP in
	\cite{BorodinCorwinSasamoto2012},
	\cite{BorodinCorwin2013discrete},
	and for the $q$-Hahn TASEP (\Cref{thm:TASEP_moments})
	in \cite{Corwin2014qmunu}.

	Start with the right-hand side of \eqref{push_q_moment_formula_in_the_text}
	and denote it
	by $v(t;\vec n)$, where $\vec n=(n_1,\ldots,n_k )$, $n_i\ge0$, are not necessarily weakly decreasing.
	We need to show that $u(t;\vec n)=v(t;\vec n)$ for weakly decreasing $n_1\ge \ldots\ge n_k\ge0$.
	Let
	\begin{equation*}
		\nabla^j_{a,b}f(\vec n):=a f(n_1,\ldots,n_k)+b f(n_1,\ldots,n_{j-1},n_j-1 ,n_{j+1},\ldots,n_k ).
	\end{equation*}
	Similarly to
	\cite{BorodinCorwin2013discrete},
	\cite{Corwin2014qmunu}
	one can readily check that the contour integrals $v(t;\vec n)$ satisfy the
	\emph{free evolution equations}
	\begin{equation}\label{free_ev_equations}
		\prod_{j=1}^k
		\nabla^{j}_{\mu-q,q-\mu\nu}v(t+1; n_1,\ldots,n_k )=
		\prod_{j=1}^{k}
		\nabla^{j}_{\nu-q,q-\nu^2}v(t;n_1,\ldots,n_k )
	\end{equation}
	with the boundary conditions
	\begin{enumerate}[\bf1.\/]
		\item $v(t;\vec n)=0$ if $n_k=0$;
		\item $v(0;\vec n)=1$ if $n_1\ge \ldots\ge n_k>0 $;
		\item If $n_i=n_{i+1}$ for some $i=1,\ldots,k-1$, then
			\begin{equation}
				\label{two_body_boundary}
				\begin{split}
				&
				\frac{\nu(1-q)}{1-q\nu}v(t;n_1,\ldots,n_i-1,n_{i+1}-1,\ldots,n_k  )
				+
				\frac{q-\nu}{1-q \nu}v(t;n_1,\ldots,n_i,n_{i+1}-1 ,\ldots,n_k )
				\\
				&\hspace{60pt}
				+
				\frac{1-q}{1-q\nu}v(t;n_1,\ldots,n_i,n_{i+1},\ldots ,n_k )
				-
				v(t;n_1,\ldots,n_i-1,n_{i+1},\ldots,n_k)=0.
				\end{split}
			\end{equation}
	\end{enumerate}
	In more detail, the equations \eqref{free_ev_equations} are
	satisfied by the integrand in \eqref{push_q_moment_formula_in_the_text},
	and the boundary conditions
	require contour integration.
	In particular, combining the integrals
	as in
	\eqref{two_body_boundary}
	gives rise to a factor $z_i-qz_{i+1}$ under the integral which
	cancels the corresponding factor in the double product over $A<B$.
	The integrand then becomes skew symmetric in $z_i$ and $z_{i+1}$,
	while the $z_i$ and $z_{i+1}$ integration contours can be chosen to coincide.
	This implies that the combination \eqref{two_body_boundary}
	of the contour integrals vanishes.

	Next, from \cite{Povolotsky2013} or \cite{Corwin2014qmunu}
	(up to a notation change) it follows
	that
	for any function $v(t;\vec n)$ satisfying the two-body boundary conditions
	\eqref{two_body_boundary} we have
	\begin{equation*}
		\prod_{j=1}^k \nabla^{j}_{1-p,p} v(t;\vec n)=
		\mathsf{P}^{\textnormal{Boson}}_{q,(1-\nu)p+\nu,\nu}v(t;\vec n).
	\end{equation*}
	Therefore, the free evolution equations \eqref{free_ev_equations}
	together with the two-body boundary conditions
	\eqref{two_body_boundary}
	are equivalent to the \emph{true evolution equations}
	\begin{equation}
		\label{true_ev_eq}
		\mu^k
		\mathsf{P}^{\textnormal{Boson}}_{q,q/\mu,\nu}v(t+1;\vec n)
		=
		\nu^k
		\mathsf{P}^{\textnormal{Boson}}_{q,q/\nu,\nu}v(t;\vec n).
	\end{equation}

	Finally, from the duality (\Cref{thm:push_duality})
	it follows that the $q$-moments
	$u(t;\vec n)$ satisfy the same true evolution equations
	\eqref{true_ev_eq} (the time evolution $t\to t+1$
	corresponds to the application of the one-step
	transition operator
	$\mathsf{P}^{\textnormal{PushTASEP}}_{q,\mu,\nu}$).
	Moreover, $u(t;\vec n)$ clearly satisfy the remaining boundary conditions \textbf{1}~and~\textbf{2} above
	(recall that, by agreement, $x_0\equiv +\infty$).
	The uniqueness of the solution to the
	true evolution equations with the boundary conditions \textbf{1}~and~\textbf{2}
	follows from the invertibility of the $q$-Boson operator
	based on its spectral theory \cite{BCPS2014},
	\cite{CorwinPetrov2015}.
	Hence $u(t;\vec n)=v(t;\vec n)$ for all $N\ge n_1\ge \ldots\ge n_k\ge0 $,
	as desired.
\end{proof}

Although only finitely many of the $q$-moments of the
$q$-Hahn PushTASEP are finite,
based on them
we conjecture
a Fredholm determinantal
formula\footnote{We will not
recall the definition of a Fredholm determinant of a kernel on a contour,
see, e.g., \cite{Bornemann_Fredholm2010} or
one of the books
\cite{Lax2002book},
\cite{Simon-trace-ideals},
\cite{GohbergKrein1969}.}
for the $e_q$-Laplace transform
of the single particle location in the process.
When $\nu=0$, the Fredholm determinant identity is proven
rigorously
\cite{BorodinCorwinFerrariVeto2013}
using the formalism of $q$-Whittaker measures and symmetric functions
instead of duality and moment formulas.
A~duality-based proof for the continuous time $q$-PushTASEP (i.e., $\nu=0$ and $\mu\to0$ in our notation)
is also possible, cf.
\cite[Theorem 7.10]{MatveevPetrov2014}.

\begin{conjecture}\label{conj:conjecture_push_Fredholm}
	For the $q$-Hahn PushTASEP started from the step initial configuration
	we have
	\begin{equation*}
		\mathop{\mathbb{E}}
		\biggl[
			\frac{1}{\left( \zeta q^{x_n(t)+n};q \right)_{\infty}}
		\biggr]
		=
		\det\left( I+K_{\zeta} \right),
		\qquad \zeta\in \mathbb{C}\setminus \mathbb{R}_{>0}.
	\end{equation*}
	Here $K_{\zeta}$ is a kernel
	of an integral operator
	on
	a small positively oriented circle around $1$
	having the form
	\begin{equation*}
		K_\zeta(w,w')=\frac{1}{2\pi \sqrt{-1}}
		\int_{-\infty\sqrt{-1}+\frac12}^{\infty\sqrt{-1}+\frac12}
		\frac{\pi}{\sin(-\pi s)}
		\hspace{.3pt}
		(-\zeta)^s
		\frac{g(w)}{g(q^s w)}
		\frac{ds}{q^s w-w'},
	\end{equation*}
	with
	\begin{equation*}
		g(w)=
		\left(
			\frac{(\nu w;q)_{\infty}}{(w;q)_{\infty}}
		\right)^n
		\left(
			\frac{(\mu w^{-1};q)_{\infty}}{(\nu w^{-1};q)_{\infty}}
		\right)^t
		\frac{1}{(\nu w;q)_\infty}
		.
	\end{equation*}
\end{conjecture}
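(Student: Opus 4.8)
The plan is to follow the by-now-standard route from nested contour integral $q$-moment formulas to a Fredholm determinant for the $e_q$-Laplace transform, as carried out for the discrete-time $q$-TASEP and $q$-Hahn TASEP in \cite{BorodinCorwin2013discrete}, \cite{Corwin2014qmunu}, \cite{BorodinCorwinFerrariVeto2013}. First I would expand the left-hand side by the $q$-binomial theorem in the form $\frac{1}{(\zeta q^{m};q)_{\infty}}=\sum_{k\ge0}\frac{\zeta^{k}q^{km}}{(q;q)_{k}}$, giving
$$
\mathop{\mathbb{E}}\Bigl[\tfrac{1}{(\zeta q^{x_{n}(t)+n};q)_{\infty}}\Bigr]
=\sum_{k\ge0}\frac{\zeta^{k}}{(q;q)_{k}}\,u\bigl(t;(n,\ldots,n)\bigr),
$$
and then substitute the formula of \Cref{thm:push_q_moments_in_the_text} with all $n_{j}=n$. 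The single-variable factor $\bigl(\tfrac{1-\nu z}{1-z}\bigr)^{n}\bigl(\tfrac{1-\nu q^{-1}z^{-1}}{1-\mu q^{-1}z^{-1}}\bigr)^{t}\tfrac{1}{1-\nu z}$ in that integrand is arranged precisely so that its product over a geometric progression $w,qw,\ldots,q^{k-1}w$ telescopes into the ratio $g(w)/g(q^{k}w)$ with $g$ as in the conjecture; this is what ultimately allows the continuation $k\mapsto s$ realized by the Mellin--Barnes factor $\tfrac{\pi}{\sin(-\pi s)}(-\zeta)^{s}$.

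Second, I would carry out the classical contour manipulation: deform each nested $z_{A}$-contour to a common small circle around $1$, collecting the residues at $z_{A}=q z_{B}$. The cross term $\prod_{A<B}\tfrac{z_{A}-z_{B}}{z_{A}-q z_{B}}$ forces the residue expansion to organize into a sum over ways of clustering the variables into geometric strings $w,qw,\ldots,q^{\lambda_{i}-1}w$, which is exactly the combinatorial skeleton of a Fredholm-determinant expansion. Re-summing the $\zeta^{k}/(q;q)_{k}$ series over $k$ and over the cluster structure, and replacing the geometric sum over string lengths by the Mellin--Barnes integral, should produce $\det(I+K_{\zeta})$ with the stated kernel on a small circle around $1$; the pole $\tfrac{1}{q^{s}w-w'}$ and the restriction $\zeta\in\mathbb{C}\setminus\mathbb{R}_{>0}$ arise from the convergence of that integral. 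This part is, up to bookkeeping, routine and parallels \cite{BorodinCorwinFerrariVeto2013}.

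The real obstacle, and the reason the statement is only a conjecture, is that the generating series above does not converge: by \Cref{lemma:finiteness_q_moments} the $q$-moment $u(t;(n,\ldots,n))$ is \emph{infinite} as soon as $\mu\ge q^{k}$, because the pushing mechanism permits arbitrarily long jumps with only geometric-in-$\mu$ suppression. Thus the derivation sketched above is purely formal. To make it rigorous one must avoid summing the divergent moments, e.g.\ by one of: (i) in the special case $\nu=0$, bypassing duality entirely and deriving the identity from $q$-Whittaker measures and the Cauchy identity, as is already known \cite{BorodinCorwinFerrariVeto2013}; (ii) adapting the duality-based argument of \cite[Theorem 7.10]{MatveevPetrov2014} for the continuous-time $q$-PushTASEP, where one checks that both sides solve the same evolution equation with the same initial data using only the finitely many legitimate moments together with analyticity in $\zeta$ to pin down the solution; or (iii) importing the formula from the stochastic higher spin vertex model picture of the upcoming \cite{BufetovMucciconiPetrov2018}. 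I expect route (ii), together with an analytic-continuation argument in $\mu$, to be the most workable, the main difficulty being to control the $e_{q}$-Laplace transform as an analytic function of $\zeta$ despite the nonexistence of its Taylor coefficients beyond finite order.
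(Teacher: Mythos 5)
The statement you are treating is presented in the paper only as a conjecture, with no proof given, and your assessment coincides with the authors' own discussion: they state explicitly that the naive route --- expanding $1/(\zeta q^{x_n(t)+n};q)_\infty$ as a series in $\zeta$ and interchanging summation with expectation --- is not available because $x_n(t)+n$ has only finitely many finite $q$-moments (\Cref{lemma:finiteness_q_moments}), which is exactly the obstruction you identify. Your proposed workarounds also mirror the paper's: the $\nu=0$ case via $q$-Whittaker measures \cite{BorodinCorwinFerrariVeto2013}, a duality-based argument as in \cite[Theorem 7.10]{MatveevPetrov2014}, and an analytic continuation from known Fredholm determinantal formulas, so your write-up is an accurate account of why the formula is conjectural rather than a proof of it.
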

A direct proof of this formula
by expanding $1/(\zeta q^{x_t(t)+n};q)_\infty$
as a series in $\zeta$ close to $0$,
and interchanging the summation and the expectation
is not possible as
the random variable
$x_n(t)+n$ has only finitely many moments.
(However, direct proofs work for related processes
like $q$-TASEP and ASEP, cf. \cite{BorodinCorwin2011Macdonald}, \cite{BorodinCorwinSasamoto2012}.)
It would be very interesting to find an extension of the symmetric functions
formalism used in \cite{BorodinCorwinFerrariVeto2013} in order to establish
\Cref{conj:conjecture_push_Fredholm}.
Another way around this obstacle which 
leads to observables suitable for asymptotic analysis 
was suggested in
\cite{imamura2017fluctuations},
\cite{imamura2019stationary}.
Overall, we believe that our conjecture can be established with the help of
a good notion of analytic continuation from known Fredholm determinantal formulas.

If \Cref{conj:conjecture_push_Fredholm} or another family of suitable observables
is available, then we expect the $q$-Hahn PushTASEP to display
the common behavior characteristic of the KPZ universality class.
That is, as $t\to+\infty$, the height function divided by $t$ 
should
have a limit shape, and the (single-point)
fluctuations of the height function
around this limit shape should have scale $t^{1/3}$ and be governed by
the GUE Tracy--Widom distribution.
The corresponding results for the $q$-TASEP can be found in \cite{FerrariVeto2013}, 
\cite{barraquand2015phase}.

\section{Beta limit}
\label{sec:beta_limit}

In this section we consider the limit of our $q$-Hahn PushTASEP
as $q,\mu,\nu\to1$. A similar limit of the $q$-Hahn TASEP
was discovered in \cite{CorwinBarraquand2015Beta}.
The latter is related to the distribution of the
random walk in beta-distributed random environment.
From the $q$-Hahn PushTASEP we obtain a more complicated model of polymer type.
It is unclear whether this new model is related to a random walk in random environment.

\subsection{Definition of the limiting model}
\label{sub:def_beta_limit_model}

Consider the random variables $X(i,t):=q^{-(x_i(t)+i)}$,
where $\vec x(t)$ is the $q$-Hahn PushTASEP with the step initial
condition $x_i(0)=-i$, $i\ge1$. Fix $N$ and view $X(i,t)$ as a
random process with values in $(0,1]$,
indexed by $(i,t)\in \left\{ 1,\ldots,N  \right\}\times \mathbb{Z}_{\ge0}$.
Scale the parameters as
\begin{equation}\label{beta_parameters_scaling}
	\textnormal{$q=e^{-\eps}$, $\mu=e^{-\bar \mu \eps}$,
	$\nu=e^{-\bar\nu \eps}$, where $0<\bar\mu<\bar\nu$, $\bar{\nu}\ge\tfrac{1}{2}$, and $\eps\searrow0$.}
\end{equation}
Note that these scaled $(q,\mu,\nu)$ fall under
the $q$-Hahn PushTASEP parameter restrictions
\eqref{mu_nu_parameters_for_qHahn}.
We will show that as $\eps\to0$, the process $X(i,t)$ converges to a certain
process $Z(i,t)$ defined as follows
using the probability distributions from \Cref{sub:2F1_distributions}.
\begin{definition}
	\label{def:Z_process}
	Fix $\bar{\mu}$ $\bar{\nu}$. Let the random process $Z(i, t)$,
	$(i, t) \in \mathbb{Z}_{> 0} \times \mathbb{Z}_{\geq 0}$, be defined
	recursively by:
	\begin{enumerate}[\bf1.\/]
	\item
	$Z(i, 0) = 1$ for all $i$.
	\item
		Set $Z(1,t) = Z(1,t-1) \cdot \mathscr{B}_1(0, \bar{\mu}, \bar{\nu}-\bar{\mu})$,
		where $\mathscr{B}_1$ is the generalized beta distribution \eqref{gen_beta_def}.
	\item
	For $i >1$ and $t > 0$ with probability one $Z(i, t-1) \neq Z(i-1,t)$.
	Then define
	\begin{equation}
		\label{NBB_recurrence}
		Z(i, t) :=
		\begin{cases}
			Z(i, t-1)\cdot \mathscr{NBB}_1
			\left(
				2\bar{\nu}-1, \frac{Z(i-1, t)^{-1} - Z(i-1,t-1)^{-1}}{Z(i,t-1)^{-1} - Z(i-1,t-1)^{-1}}, \frac{Z(i,t-1)}{Z(i-1, t)}, \bar{\mu}, \bar{\nu}-\bar{\mu}
			\right),\\
			\hspace{230pt}
			\textnormal{if $Z(i, t-1) < Z(i-1,t)$};
			\\[6pt]
			Z(i-1, t)\cdot \mathscr{NBB}_1
			\left(
				2\bar{\nu}-1, \frac{Z(i, t-1)^{-1} - Z(i-1,t-1)^{-1}}{Z(i-1, t)^{-1} - Z(i-1, t-1)^{-1}}, \frac{Z(i-1, t)}{Z(i, t-1)}, \bar{\mu}, \bar{\nu}
			\right),\\
			\hspace{230pt}
			\textnormal{if $Z(i-1,t) < Z(i, t-1)$},
		\end{cases}
	\end{equation}
			where
			$\mathscr{NBB}_1$ is the distribution given by \eqref{nbb1_distribution_definition}.
	\end{enumerate}
\end{definition}
Let us discuss two points related to the definition of
the process $Z(i,t)$.
First, note that when $\bar{\nu}=\frac{1}{2}$, the recurrence \eqref{NBB_recurrence} simplifies:
\begin{equation*}
	Z(i, t) :=
	\begin{cases}
		Z(i, t-1)\cdot \mathscr{B}_1\left(\frac{Z(i,t-1)}{Z(i-1, t)}, \bar{\mu}, \frac{1}{2}-\bar{\mu}\right), & \text{if} \ Z(i, t-1) < Z(i-1,t);
		\\[7pt]
		Z(i-1, t)\cdot \mathscr{B}_1\left(\frac{Z(i-1, t)}{Z(i, t-1)}, \bar{\mu}, \frac{1}{2} \right), & \text{if} \ Z(i-1,t) < Z(i, t-1).
	\end{cases}
\end{equation*}
In particular, there is no immediate dependence on $Z(i-1, t-1)$ in the
recurrence formula.
Moreover, if above we have
$Z(i-1, t)=Z$ and $Z(i, t-1) = \delta Z$, then $Z(i, t)\to Z$ as $\delta \to 1$
because $\mathscr{B}_1$ converges to the delta measure at $1$.

Second, the
definition of $Z(i,t)$ when $Z(i,t-1)=Z(i-1,t)$
also makes sense even if $\bar{\nu}>\frac{1}{2}$, as follows from the next lemma:
\begin{lemma}
	\label{lemma:Z_Z_equal_remark_1_15}
	Let $\bar{\nu}>\frac{1}{2}$.
	Assume that
	\begin{equation*}
		Z(i-1, t-1)=X,
		\qquad
		Z(i-1, t)=Z < X,
		\qquad
		Z(i, t-1) = (1-\gamma)Z.
	\end{equation*}
	Then $Z(i, t) \to Z \cdot \mathscr{B}_1(Z/X, \bar{\mu}, 2\bar{\nu}-1)$ as $\gamma \to 0$.
\end{lemma}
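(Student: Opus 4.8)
To prove the lemma, note first that the convergence asserted is weak convergence of laws: for the given deterministic data $X=Z(i-1,t-1)$, $Z=Z(i-1,t)<X$, $Z(i,t-1)=(1-\gamma)Z$, the variable $Z(i,t)$ has a law depending on $\gamma$, and the claim is that this law converges weakly to that of $Z\cdot\mathscr{B}_1(Z/X,\bar\mu,2\bar\nu-1)$ as $\gamma\to0$. Since $Z(i,t-1)=(1-\gamma)Z<Z=Z(i-1,t)$ for every $\gamma\in(0,1)$, only the first branch of \eqref{NBB_recurrence} is ever invoked, and substituting the data there gives
\[
Z(i,t)\ \overset{d}{=}\ (1-\gamma)Z\cdot\mathscr{NBB}_1\!\left(2\bar\nu-1,\ \theta_\gamma,\ 1-\gamma,\ \bar\mu,\ \bar\nu-\bar\mu\right),\qquad
\theta_\gamma:=\frac{Z^{-1}-X^{-1}}{(1-\gamma)^{-1}Z^{-1}-X^{-1}}.
\]
So the lemma is purely a statement about the $\gamma\to0$ limit of this one-parameter family of laws. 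The first step I would carry out is to record how the arguments move: the prefactor $(1-\gamma)Z\to Z$, the parameters $2\bar\nu-1,\bar\mu,\bar\nu-\bar\mu$ are frozen, and both the second and third arguments tend to the boundary value $1$, but \emph{in a coupled way} — a first-order expansion gives $1-\theta_\gamma=\tfrac{\gamma}{1-Z/X}+O(\gamma^2)$, hence $\tfrac{1-(1-\gamma)}{1-\theta_\gamma}\to 1-Z/X$. This coupling is the only mechanism by which $X=Z(i-1,t-1)$ leaves a trace in the limit.

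The heart of the argument is then a \textbf{boundary degeneration} of the $\mathscr{NBB}_1$ distribution: as $\theta,\eta\to1$ with $\tfrac{1-\eta}{1-\theta}\to 1-Z/X$, the law $\mathscr{NBB}_1(2\bar\nu-1,\theta,\eta,\bar\mu,\bar\nu-\bar\mu)$ converges weakly to $\mathscr{B}_1(Z/X,\bar\mu,2\bar\nu-1)$. I would establish this directly from the explicit density \eqref{nbb1_distribution_definition}, which is a power-weighted ${}_2F_1$ supported on an interval whose endpoints themselves depend on $(\theta,\eta)$: (i) check that these endpoints converge to those of the support of $\mathscr{B}_1(Z/X,\bar\mu,2\bar\nu-1)$ in \eqref{gen_beta_def}; (ii) on compact subsets of the interior, pass to the limit in the density using continuity of ${}_2F_1$ in its parameters together with the Gauss-type evaluation handling the confluence of the two parameters that hit $1$ — this is exactly the step producing the surviving Beta index $2\bar\nu-1$, and it is the step that would collapse entirely ($\mathscr{NBB}_1$ reducing to $\mathscr{B}_1$, as in the display preceding the lemma) were $\bar\nu=\tfrac12$, which explains the hypothesis $\bar\nu>\tfrac12$; (iii) produce a $\gamma$-uniform integrable majorant near the moving endpoint, so that Scheff\'e's lemma upgrades pointwise convergence of densities to convergence in distribution. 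Combining (i)--(iii) with Slutsky applied to the deterministic prefactor $(1-\gamma)Z$ then yields $Z(i,t)\Rightarrow Z\cdot\mathscr{B}_1(Z/X,\bar\mu,2\bar\nu-1)$, which is the claim.

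I expect the main obstacle to be parts (ii)--(iii): pinning down which confluent limit of the ${}_2F_1$ in \eqref{nbb1_distribution_definition} is selected by the coupled scaling of $\theta_\gamma$ and $1-\gamma$, and producing a dominating bound that stays uniform while the support endpoint drifts. A route that avoids the hypergeometric asymptotics altogether is to exploit that $\mathscr{NBB}_1$ is itself the $\eps\to0$ scaling limit, under \eqref{beta_parameters_scaling}, of the one-step law $\mathbf{P}_{\ell,g}(\cdot)$ of \eqref{P_ell_g_update_probability_definition}, with the diagonal $Z(i,t-1)=Z(i-1,t)$ corresponding to the boundary case $\ell=g$; one may then take $\eps\to0$ in the already transformed ${}_8\phi_7$ form \eqref{P_ell_g_rewrite_first_case_87} (obtained via Watson's transformation \eqref{Watsontransformation}), where the relevant confluence is manifest, and conclude via \Cref{thm:beta_convergence}. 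Either way, once the coupled scaling of $(\theta_\gamma,1-\gamma)$ is in hand, the remaining computation is routine.
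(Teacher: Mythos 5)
Your proposal has the right general strategy for one half of the statement, but it leaves two genuine gaps. First, you restrict to $\gamma\in(0,1)$, so only the first branch of \eqref{NBB_recurrence} is ever used. The limit in the lemma is two-sided, and that is the whole point of the statement: it is invoked to justify that the recursive definition extends continuously to the boundary case $Z(i,t-1)=Z(i-1,t)$, which requires showing that \emph{both} branches of \eqref{NBB_recurrence} produce the same limit law as the diagonal is approached from either side. The paper's proof accordingly computes two separate limits: $\gamma\to0+$ (first branch, $\mathscr{NBB}_1$ with final parameters $(\bar{\mu},\bar{\nu}-\bar{\mu})$ and prefactor $Z(i,t-1)$) and $\gamma\to0-$ (second branch, final parameters $(\bar{\mu},\bar{\nu})$ and prefactor $Z(i-1,t)=Z$), and verifies that both densities converge to that of $\mathscr{B}_1(Z/X,\bar{\mu},2\bar{\nu}-1)$. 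Since the two branches carry genuinely different parameters, their agreement in the limit is not automatic and needs its own computation; your argument simply never addresses the $\gamma<0$ case.

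Second, within the $\gamma\to0+$ case the crux is precisely the step you defer. Your step (ii) cannot be carried out by ``continuity of ${}_2F_1$ in its parameters'': in the density \eqref{nbb1_distribution_definition} the argument $w=\frac{p(1-x)}{1-cx}$ tends to $1$ for every fixed $x$, and ${}_2F_1(2\bar{\nu}-1,\bar{\nu};\bar{\nu}-\bar{\mu};w)$ diverges there because the balance $(\bar{\nu}-\bar{\mu})-(2\bar{\nu}-1)-\bar{\nu}=1-\bar{\mu}-2\bar{\nu}$ is negative; the divergence $(1-w)^{1-\bar{\mu}-2\bar{\nu}}\sim\mathrm{const}\cdot\gamma^{1-\bar{\mu}-2\bar{\nu}}$ must cancel against the vanishing prefactor $(1-p)^{2\bar{\nu}-1}(1-c)^{\bar{\mu}}\sim\mathrm{const}\cdot\gamma^{2\bar{\nu}-1+\bar{\mu}}$. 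The paper makes this cancellation explicit by first applying Euler's transformation ${}_2F_1(a,b;c;z)=(1-z)^{c-a-b}\,{}_2F_1(c-a,c-b;c;z)$, after which the remaining ${}_2F_1$ is continuous up to $z=1$ and is evaluated by Gauss's theorem, yielding the factor $\Gamma(\bar{\mu}+2\bar{\nu}-1)/\bigl(\Gamma(\bar{\mu})\Gamma(2\bar{\nu}-1)\bigr)$ and hence the $\mathscr{B}_1(Z/X,\bar{\mu},2\bar{\nu}-1)$ density. Your identification of the coupled scaling $\frac{1-c}{1-p}\to 1-Z/X$ is correct and is indeed the mechanism through which $X$ survives, but without the Euler--Gauss step (or an equivalent $z\to1$ asymptotic of ${}_2F_1$) the argument is incomplete. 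Your alternative route through the ${}_8\phi_7$ formulas and \Cref{thm:beta_convergence} would additionally require an interchange of the limits $\eps\to0$ and $\gamma\to0$ (the prelimit boundary case $\ell=g$ versus the limiting diagonal), which itself needs justification and is far heavier than the direct density computation the paper performs.
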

\begin{proof}
	We use
	Euler's transformation formula
\begin{align*}
{_{2}F_{1}}(a, b; c; z) = (1-z)^{c-a-b}{_{2}F_{1}}(c-a, c-b; c; z)
\end{align*}
and the Gauss's theorem
\begin{align*}
{_{2}F_{1}}(a, b; c, 1) = \frac{\Gamma(c)\Gamma(c-a-b)}{\Gamma(c-a)\Gamma(c-b)}, \quad c > \max\{0, a, b, a+b\}.
\end{align*}
For $\gamma \to 0+$ the probability density of
$\frac{Z(i, t)}{(1-\gamma)Z}$
(conditioned on $X$ and $Z$)
at $x$ is
\begin{align*}
	&{_{2}F_{1}}
	\left(
		2\bar{\nu}-1, \bar{\nu};
		\bar{\nu}-\bar{\mu};
		\frac{(Z^{-1}-X^{-1})(1-x)}{(1-(1-\gamma)x)(Z^{-1}(1-\gamma)^{-1} - X^{-1})}
	\right)
	\\
	&\hspace{80pt}
	\times
	\frac{x^{\bar{\mu}-1}(1-x)^{\bar{\nu}-\bar{\mu}-1}}
	{(1-(1-\gamma)x)^{\bar{\nu}}}
	\frac{\gamma^{\bar{\mu}+2\bar{\nu}-1}\Gamma(\bar{\nu})}
	{\Gamma(\bar{\mu}) \Gamma(\bar{\nu}-\bar{\mu})}
	\left(
		\frac{Z^{-1}(1-\gamma)^{-1}}
		{Z^{-1}(1-\gamma)^{-1} - X^{-1}}
	\right)^{2\bar{\nu}-1}
	\\&\hspace{20pt}
	=
	{_{2}F_{1}}
	\left(
		-\bar{\mu}-\bar{\nu}+1, -\bar{\mu}; \bar{\nu}-\bar{\mu};
		\frac{(Z^{-1}-X^{-1})(1-x)}
		{(1-(1-\gamma)x)(Z^{-1}(1-\gamma)^{-1} - X^{-1})}
	\right)
	\\
	&\hspace{80pt}
	\times
	\frac{\gamma^{\bar{\mu}+2\bar{\nu}-1}\Gamma(\bar{\nu})}
	{\Gamma(\bar{\mu}) \Gamma(\bar{\nu}-\bar{\mu})}
	\left(
		\frac{Z^{-1}(1-\gamma)^{-1}}
		{Z^{-1}(1-\gamma)^{-1} - X^{-1}}
	\right)^{2\bar{\nu}-1}
	\\
	&\hspace{80pt}
	\times
	\left(
		\frac{Z^{-1}(1-\gamma)^{-1}-xX^{-1}}
		{(1-(1-\gamma)x)(Z^{-1}(1-\gamma)^{-1} - X^{-1})}
	\right)^{-\bar{\mu}+1-2\bar{\nu}}
	\frac{x^{\bar{\mu}-1}(1-x)^{\bar{\nu}-\bar{\mu}-1}}
	{(1-(1-\gamma)x)^{\bar{\nu}}} \gamma^{-\bar{\mu}+1-2\bar{\nu}}
	\\&\hspace{20pt}
	\to
 \frac{x^{\bar{\mu}-1}(1-x)^{2\bar{\nu}-2}}
 {(1-xZ/X)^{\bar{\mu}+2\bar{\nu}-1}}
 \frac{(1 - Z/X)^{\bar{\mu}+2\bar{\nu}-1}
 \Gamma(\bar{\mu}+2\bar{\nu}-1)}
 {\Gamma(\bar{\mu})\Gamma(2\bar{\nu}-1)}.
\end{align*}
For $\gamma \to 0-$ the probability density of $\frac{Z(i, t)}{Z}$
(again, conditioned on $Z$ and $X$)
at $x$ is
\begin{align*}
	&
	{_{2}F_{1}}\left(
		2\bar{\nu}-1, \bar{\nu}+\bar{\mu}; \bar{\nu}; \frac{(Z^{-1}(1-\gamma)^{-1}-X^{-1})(1-x)}{(1-(1-\gamma)^{-1}x)(Z^{-1} - X^{-1})}
	\right)
	\\&\hspace{80pt}
	\times
	\frac{x^{\bar{\mu}-1}(1-x)^{\bar{\nu}-1}}
	{(1-(1-\gamma)^{-1}x)^{\bar{\nu}+\bar{\mu}}}
	\frac{(\frac{-\gamma}{1-\gamma})^{\bar{\mu}+2\bar{\nu}-1}\Gamma(\bar{\nu}+\bar{\mu})}{\Gamma(\bar{\mu}) \Gamma(\bar{\nu})}
	\left(\frac{Z^{-1}}{Z^{-1} - X^{-1}}\right)^{2\bar{\nu}-1}
	\\&\hspace{20pt}
	=
	{_{2}F_{1}}\left(
		1-\bar{\nu}, -\bar{\mu}; \bar{\nu}; \frac{(Z^{-1}(1-\gamma)^{-1}-X^{-1})(1-x)}{(1-(1-\gamma)^{-1}x)(Z^{-1} - X^{-1})}
	\right)
	\frac{(\frac{-\gamma}{1-\gamma})^{\bar{\mu}+2\bar{\nu}-1}\Gamma(\bar{\nu}+\bar{\mu})}{\Gamma(\bar{\mu}) \Gamma(\bar{\nu})}
	\\&\hspace{80pt}
	\times
	\left(\frac{Z^{-1}}{Z^{-1} - X^{-1}}\right)^{2\bar{\nu}-1}
	\left(\frac{Z^{-1}-xX^{-1}}{(1-(1-\gamma)^{-1}x)(Z^{-1}- X^{-1})}\right)^{-\bar{\mu}+1-2\bar{\nu}}
	\\&\hspace{80pt}
	\times
	\frac{x^{\bar{\mu}-1}(1-x)^{\bar{\nu}-1}}{(1-(1-\gamma)x)^{\bar{\nu}+\bar{\mu}}} \left(\frac{-\gamma}{1-\gamma}\right)^{-\bar{\mu}+1-2\bar{\nu}}
	\\&\hspace{20pt}
	\to
	\frac{x^{\bar{\mu}-1}(1-x)^{2\bar{\nu}-2}}{(1-xZ/X)^{\bar{\mu}+2\bar{\nu}-1}}
	\frac{(1 - Z/X)^{\bar{\mu}+2\bar{\nu}-1}\Gamma(\bar{\mu}+2\bar{\nu}-1)}{\Gamma(\bar{\mu})\Gamma(2\bar{\nu}-1)}.
\end{align*}
This completes the proof.
\end{proof}

\subsection{Change of variables and inverse beta recursion}
\label{sec:cov}
Through a change of variables (pointed out to us by Guillaume Barraquand after the first posting of this work), it is possible to simplify the form of the recursion for $Z$ given in \Cref{def:Z_process}. The generalized negative binomial beta distributions  reduce to their standard counterparts. 
In the case $\bar{\nu}=\tfrac{1}{2}$, the resulting recursion is quite similar, though different from the one satisfied by the inverse beta polymer partition function \cite{thieryLD2015integrable}. 
In particular, the choice of parameters for the beta random variable depends on whether $Z(i-1,t)$ or $Z(i,t-1)$ is greater.
It is not clear whether for $Z(i,t)$ there exists a representation as a polymer
partition function.

Define $\tilde{Z}(i,t) := Z(i,t)^{-1}$ where $Z$ is given through \Cref{def:Z_process}. By combining this change of variables with that of \Cref{lem:changeofvar}, we may rewrite the recursion satisfied by $Z$ as follows.

\begin{lemma}\label{lem:Ztilde}
$\tilde{Z}(i,t) := Z(i,t)^{-1}, (i,t)\in \mathbb{Z}_{>0}\times \mathbb{Z}_{\geq 0}$ satisfies the recursion:
\begin{enumerate}
\item $\tilde{Z}(i,0) = 1$ for all $i$.
\item $\tilde{Z}(1,t) = \tilde{Z}(1,t-1) \,\cdot \, \mathscr{Beta}^{-1}(\bar{\mu},\bar{\nu}-\bar{\mu})$ where $\mathscr{Beta}^{-1}$ is the inverse of a beta distributed random variable (see \Cref{sub:2F1_distributions}).
\item For $i>1$ and $t>0$ with probability one $\tilde{Z}(i,t-1)\neq \tilde{Z}(i-1,t)$. Then, when $\tilde{Z}(i,t-1)>\tilde{Z}(i-1,t)$, 
$$
\tilde{Z}(i,t) = \tilde{Y} \tilde{Z}(i,t-1) + (1-\tilde{Y}) \tilde{Z}(i-1,t),
$$
where $\tilde{Y}$ is $\mathscr{NBBeta}^{-1}(2\bar{\nu}-1,\tfrac{\tilde{Z}(i-1,t)-\tilde{Z}(i-1,t-1)}{\tilde{Z}(i,t-1)-\tilde{Z}(i-1,t-1)},\bar{\mu},\bar{\nu}-\bar{\mu})$-distributed (see \Cref{sub:2F1_distributions}); and when 
$\tilde{Z}(i,t-1)<\tilde{Z}(i-1,t)$,
$$
\tilde{Z}(i,t) = \tilde{Y} \tilde{Z}(i-1,t) + (1-\tilde{Y}) \tilde{Z}(i,t-1),
$$
where $\tilde{Y}$ is $\mathscr{NBBeta}^{-1}(2\bar{\nu}-1,\tfrac{\tilde{Z}(i,t-1)-\tilde{Z}(i-1,t-1)}{\tilde{Z}(i-1,t)-\tilde{Z}(i-1,t-1)},\bar{\mu},\bar{\nu})$-distributed.
\end{enumerate}

In the special case when $\bar{\nu}=1/2$, the recursion simplifies as follows: When $\tilde{Z}(i,t-1)>\tilde{Z}(i-1,t)$,
$$
\tilde{Z}(i,t) = \tilde{Y} \tilde{Z}(i,t-1) + (1-\tilde{Y}) \tilde{Z}(i-1,t),
$$
where $\tilde{Y}$ is $\mathscr{Beta}^{-1}(\bar{\mu},\tfrac{1}{2}-\bar{\mu})$-distributed; and when
$\tilde{Z}(i,t-1)<\tilde{Z}(i-1,t)$,
$$
\tilde{Z}(i,t) = \tilde{Y} \tilde{Z}(i-1,t) + (1-\tilde{Y}) \tilde{Z}(i,t-1),
$$
where $\tilde{Y}$ is $\mathscr{Beta}^{-1}(\bar{\mu},\tfrac{1}{2})$-distributed. 
\end{lemma}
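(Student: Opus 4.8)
The plan is to invert the recursion of \Cref{def:Z_process} relation by relation, and then feed each generalized (negative binomial) beta factor through the change of variables of \Cref{lem:changeofvar}, which converts an $\mathscr{NBB}_1$ (resp.\ $\mathscr{B}_1$) random variable into an $\mathscr{NBBeta}^{-1}$ (resp.\ $\mathscr{Beta}$) one. Items (1) and (2) are immediate: $\tilde Z(i,0)=Z(i,0)^{-1}=1$, and inverting $Z(1,t)=Z(1,t-1)\cdot\mathscr B_1(0,\bar\mu,\bar\nu-\bar\mu)$ gives $\tilde Z(1,t)=\tilde Z(1,t-1)\cdot\mathscr B_1(0,\bar\mu,\bar\nu-\bar\mu)^{-1}$; since the parameter $0$ collapses the $_2F_1$-twist so that $\mathscr B_1(0,a,b)$ is the ordinary $\mathscr{Beta}(a,b)$ law (see \Cref{sub:2F1_distributions}), this is exactly $\tilde Z(1,t-1)\cdot\mathscr{Beta}^{-1}(\bar\mu,\bar\nu-\bar\mu)$.

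For item (3), fix $i>1$ and $t>0$ and first work on the event $Z(i,t-1)<Z(i-1,t)$, i.e.\ $\tilde Z(i,t-1)>\tilde Z(i-1,t)$. Put $r:=Z(i,t-1)/Z(i-1,t)=\tilde Z(i-1,t)/\tilde Z(i,t-1)\in(0,1)$ and write $Z(i,t)=Z(i,t-1)\cdot W$ with $W\sim\mathscr{NBB}_1\!\left(2\bar\nu-1,\,c,\,r,\,\bar\mu,\,\bar\nu-\bar\mu\right)$, where $c=\frac{\tilde Z(i-1,t)-\tilde Z(i-1,t-1)}{\tilde Z(i,t-1)-\tilde Z(i-1,t-1)}$ is the first ratio-argument in \eqref{NBB_recurrence} rewritten via $\tilde Z=Z^{-1}$. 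Then $\tilde Z(i,t)=\tilde Z(i,t-1)\,W^{-1}$, and setting $\tilde Y:=\frac{W^{-1}-r}{1-r}$ one has the algebraic identity $W^{-1}=\tilde Y+(1-\tilde Y)r$, hence $\tilde Z(i,t)=\tilde Z(i,t-1)\bigl(\tilde Y+(1-\tilde Y)r\bigr)=\tilde Y\,\tilde Z(i,t-1)+(1-\tilde Y)\,\tilde Z(i-1,t)$, which is the asserted affine form. It then remains only to identify the law of $\tilde Y$, and this is precisely the content of \Cref{lem:changeofvar}: for $W\sim\mathscr{NBB}_1(m,c,r,a,b)$ one has $(W^{-1}-r)/(1-r)\sim\mathscr{NBBeta}^{-1}(m,c,a,b)$; with $m=2\bar\nu-1$, $a=\bar\mu$, $b=\bar\nu-\bar\mu$ and the above value of $c$ this gives exactly the distribution stated in the lemma.

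The branch $\tilde Z(i,t-1)<\tilde Z(i-1,t)$ is handled identically after swapping the roles of $\tilde Z(i,t-1)$ and $\tilde Z(i-1,t)$: now $Z(i,t)=Z(i-1,t)\cdot W$, the ratio becomes $r=\tilde Z(i,t-1)/\tilde Z(i-1,t)$, the inner ratio is $c=\frac{\tilde Z(i,t-1)-\tilde Z(i-1,t-1)}{\tilde Z(i-1,t)-\tilde Z(i-1,t-1)}$, and the third beta parameter is $\bar\nu$ rather than $\bar\nu-\bar\mu$, exactly as dictated by the second branch of \eqref{NBB_recurrence}; the same two lines of algebra plus \Cref{lem:changeofvar} give $\tilde Z(i,t)=\tilde Y\,\tilde Z(i-1,t)+(1-\tilde Y)\,\tilde Z(i,t-1)$ with $\tilde Y\sim\mathscr{NBBeta}^{-1}(2\bar\nu-1,c,\bar\mu,\bar\nu)$. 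For the $\bar\nu=\tfrac12$ specialization, $2\bar\nu-1=0$, and one reads off from the definition in \Cref{sub:2F1_distributions} that $\mathscr{NBBeta}^{-1}(0,c,a,b)=\mathscr{Beta}^{-1}(a,b)$ for every $c$ (the negative-binomial-type weight degenerates and the $c$-dependence drops out); substituting this into the two branches yields $\tilde Y\sim\mathscr{Beta}^{-1}(\bar\mu,\tfrac12-\bar\mu)$ and $\tilde Y\sim\mathscr{Beta}^{-1}(\bar\mu,\tfrac12)$ respectively.

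Modulo \Cref{lem:changeofvar}, which I am free to assume, everything above is bookkeeping; the one place genuinely prone to error is aligning the two branches of \eqref{NBB_recurrence} after inversion — in particular keeping track of which of $\tilde Z(i,t-1)$, $\tilde Z(i-1,t)$ plays the role of the ``base point'' $Z(i,t-1)$ of \Cref{def:Z_process}, and the asymmetric appearance of $\bar\nu-\bar\mu$ versus $\bar\nu$ — together with checking that $\mathscr{B}_1(0,\cdot,\cdot)$ and $\mathscr{NBBeta}^{-1}(0,\cdot,\cdot,\cdot)$ really do collapse to ordinary (inverse) beta laws. I expect the proof in the paper to amount to quoting \Cref{lem:changeofvar} and carrying out exactly these substitutions.
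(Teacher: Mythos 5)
Your proposal is correct and follows essentially the same route as the paper: on each branch you invert the multiplicative update of \Cref{def:Z_process}, observe that $\tilde Y=(W^{-1}-r)/(1-r)$ is exactly the inverse of the linear-fractional variable in \Cref{lem:changeofvar} (the paper applies that lemma to $W$ and then sets $\tilde Y=Y^{-1}$), and match parameters after rewriting in the $\tilde Z$ variables, with the $\bar\nu=\tfrac12$ case following since $2\bar\nu-1=0$ kills the negative-binomial mixing. The only nitpick is cosmetic: the $c$-dependence that vanishes in $\mathscr{B}_1(0,\bar\mu,\bar\nu-\bar\mu)$ is the factor $(1-c)^{m}(1-cx)^{-(m+n)}$ in \eqref{gen_beta_def}, not a ${}_2F_1$-twist (that twist only appears in $\mathscr{NBB}_1$), but your conclusion there is correct.
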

\begin{proof}
	We only prove the general $\bar{\nu}$ recursion of $\tilde{Z}$ when $\tilde{Z}(i,t-1)>\tilde{Z}(i-1,t)$. The other case and specialization to $\bar{\nu}=\tfrac{1}{2}$ then follows likewise. 
Let $X$ be distributed as
$$
\mathscr{NBB}_1 \left(2\bar{\nu}-1, \frac{Z(i-1, t)^{-1} - Z(i-1,t-1)^{-1}}{Z(i,t-1)^{-1} - Z(i-1,t-1)^{-1}}, \frac{Z(i,t-1)}{Z(i-1, t)}, \bar{\mu}, \bar{\nu}-\bar{\mu}\right).
$$
From \eqref{NBB_recurrence}, it follows that
\begin{equation}\label{eq:Zceqn}
\frac{Z(i,t)}{Z(i,t-1)} = X\qquad \textrm{and}\qquad \frac{Z(i,t)}{Z(i-1,t)} = c \cdot X, \quad \textrm{where } c := \frac{Z(i,t-1)}{Z(i-1,t)}.
\end{equation}
Define $Y := \frac{X-c\cdot X}{1- c\cdot X}$. Since \eqref{NBB_recurrence} shows that $X$ is $\mathscr{NBB}_1$-distributed (with suitable parameters), we may employ \Cref{lem:changeofvar} to show that $Y$ is $\mathscr{NBBeta}(r,p,m,n)$ with $r=2\bar{\nu}-1$, $p= \frac{Z(i-1, t)^{-1} - Z(i-1,t-1)^{-1}}{Z(i,t-1)^{-1} - Z(i-1,t-1)^{-1}}$, $m= \bar{\mu}$ and $n = \bar{\nu}-\bar{\mu}$. By \eqref{eq:Zceqn}, 
$$
\frac{\frac{Z(i,t)}{Z(i,t-1)} - \frac{Z(i,t)}{Z(i-1,t)}}{1-\frac{Z(i,t)}{Z(i-1,t)}} = Y.
$$
We may rewrite things now via $\tilde{Z}$. In these variables, $p = \tfrac{\tilde{Z}(i-1,t)-\tilde{Z}(i-1,t-1)}{\tilde{Z}(i,t-1)-\tilde{Z}(i-1,t-1)}$ and the above recursion reduces to the desired relation
$$
\tilde{Z}(i,t) = \tilde{Y} \tilde{Z}(i,t-1) + (1-\tilde{Y}) \tilde{Z}(i-1,t),
$$
where $\tilde{Y} = Y^{-1}$. 
\end{proof}

\subsection{Convergence}
\label{sub:convergence_to_beta_model}

Let us now prove the convergence of the $q$-Hahn PushTASEP to the
process $Z(i,t)$ from \Cref{def:Z_process}.
\begin{theorem}
	\label{thm:beta_convergence}
	For fixed $\bar{\mu}$ and $\bar{\nu}$, as $\eps \to 0+$,
	the process $\{X(i,t)\colon 1\le i\le N,\; t\in \mathbb{Z}_{\ge0}\}$
	converges to $\{Z(i, t)\colon 1\le i\le N,\; t\in \mathbb{Z}_{\ge0}\}$.
\end{theorem}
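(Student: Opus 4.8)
The plan is to recognize the rescaled height $X(i,t)=q^{-(x_i(t)+i)}$ as a Markov recursion built in exactly the same geometry as the process $Z$ of \Cref{def:Z_process}: by the very definition of the $q$-Hahn PushTASEP, $X(i,t)$ is a deterministic function of $X(i,t-1)$, $X(i-1,t-1)$, $X(i-1,t)$ and one independent random variable (the jump $L$ of the $i$-th particle), just as $Z(i,t)$ in \eqref{NBB_recurrence} is a function of $Z(i,t-1)$, $Z(i-1,t-1)$, $Z(i-1,t)$ and a fresh $\mathscr{NBB}_1$ variable. Since the index set $\{1,\dots,N\}\times\mathbb{Z}_{\ge0}$ is countable and the dependence is local, it suffices to prove convergence of finite-dimensional distributions, which we do by induction along the diagonals $i+t$. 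The base case $t=0$ is immediate, since $x_i(0)=-i$ gives $X(i,0)=1=Z(i,0)$; and the first particle is easy too, since $X(1,t)=X(1,t-1)\,q^{\ell}$ with $\ell\sim\varphi_{q,\mu,\nu}(\cdot\mid\infty)$, and under the scaling \eqref{beta_parameters_scaling} the $q$-series \eqref{phi_qmunu_infinity_definition} confluences (exactly as for the $q$-Hahn TASEP in \cite{CorwinBarraquand2015Beta}) to the generalized beta law $\mathscr{B}_1(0,\bar\mu,\bar\nu-\bar\mu)$, matching item~2 of \Cref{def:Z_process}.

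For the interior update one first records the dictionary between the lattice data and the $X$-variables. Writing $g=x_{i-1}(t-1)-x_i(t-1)-1$, $\ell=x_{i-1}(t-1)-x_{i-1}(t)$, $L=x_i(t-1)-x_i(t)$, one has
\begin{equation*}
	g=\tfrac{1}{\eps}\log\frac{X(i-1,t-1)}{X(i,t-1)},\qquad
	\ell=\tfrac{1}{\eps}\log\frac{X(i-1,t-1)}{X(i-1,t)},\qquad
	X(i,t)=X(i,t-1)\,q^{L},
\end{equation*}
so that $\eps\ell$ and $\eps g$ converge to finite limits determined by the ratios of $Z(i-1,t-1),Z(i-1,t),Z(i,t-1)$, and the sign of $\ell-g$ is precisely the sign of $Z(i-1,t)-Z(i,t-1)$. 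Hence the two branches $\ell<g$ and $\ell\ge g$ of the $q$-Hahn PushTASEP update (the same dichotomy as in the proof of \Cref{prop:nonnegativity}) correspond to the two cases of \eqref{NBB_recurrence}; the borderline $\ell=g$ occurs with probability tending to $0$ because $Z(i,t-1)\neq Z(i-1,t)$ almost surely, and by \Cref{lemma:Z_Z_equal_remark_1_15} the limiting recursion is continuous across it, so the two cases may be handled separately.

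The heart of the argument is a local limit theorem for the kernel $\mathbf{P}_{\ell,g}(L)$: under \eqref{beta_parameters_scaling} with $\eps\ell\to a$, $\eps g\to b$ and $L$ scaled so that $q^{L}\to y\in(0,1]$, the rescaled law $\eps^{-1}\sum_{L}\mathbf{P}_{\ell,g}(L)\,\mathbf{1}\{q^{L}\in dy\}$ converges to the $\mathscr{NBB}_1$ law with the parameters appearing in \eqref{NBB_recurrence}. In the branch $\ell<g$ one uses the very-well-poised ${}_8\phi_7$ representation \eqref{P_ell_g_rewrite_first_case_87} (and \eqref{P_ell_g_rewrite_second_case_87} in the other branch): all arguments of the ${}_8\phi_7$ then converge to fixed constants — powers of $e$ built from $a-b$, $\log y$, $\bar\mu$, $\bar\nu$ — so the $q$-series converges termwise to a classical ${}_8F_7$, while the $q$-Pochhammer prefactors, whose indices are either bounded or of order $1/\eps$, are evaluated by the confluence $(q^{c};q)_n/(1-q)^n\to(c)_n$, $(q;q)_n/(1-q)^n\to\Gamma(n+1)$ together with Stirling asymptotics, producing a product of $\Gamma$-ratios and power functions. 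Passing to the limit turns the lattice sum into a Riemann integral and identifies the limiting density with the ${}_2F_1$-density \eqref{nbb1_distribution_definition}, the match of the ${}_8F_7$ against the ${}_2F_1$ being a standard hypergeometric transformation (alternatively one runs the same limit from the equivalent ${}_4\phi_3$ form \eqref{P_ell_g_rewrite_first_case}). Tightness — ruling out escape of mass as $q^{L}\to0$ — comes from the factor $\mu^{L}=e^{-\bar\mu\eps L}$ in \eqref{P_ell_g_update_probability_definition}, which bounds $\sum_{L}\mu^{-\delta L}\mathbf{P}_{\ell,g}(L)$ uniformly in small $\eps$ for small $\delta>0$. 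This step is the pushing analogue of the $q\to1$ scaling limits carried out in \cite{CorwinBarraquand2015Beta} and \cite{MatveevPetrov2014}.

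Finally, combining the local limit theorem with the induction hypothesis (joint convergence of $(X(i-1,t-1),X(i-1,t),X(i,t-1))$ to $(Z(i-1,t-1),Z(i-1,t),Z(i,t-1))$) through a Skorokhod coupling and the continuous mapping theorem yields $X(i,t)\Rightarrow Z(i,t)$ jointly with all previously constructed coordinates, which closes the induction and proves the theorem. The main obstacle is the local limit theorem of the previous paragraph: one must control the $q\to1$ asymptotics of $\mathbf{P}_{\ell,g}(L)$ uniformly enough to obtain weak (not merely pointwise) convergence, keep careful track of the somewhat intricate parameter list of $\mathscr{NBB}_1$, and establish the tail estimate; the inductive bookkeeping and the $t=0$ and $i=1$ base cases are routine.
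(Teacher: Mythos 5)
Your proposal follows essentially the same route as the paper's proof: induction over the lattice with the $t=0$ and $i=1$ base cases handled via the confluence of $\varphi_{q,\mu,\nu}(\cdot\mid\infty)$, the dictionary sending $\eps\ell,\eps g$ to log-ratios of the limiting $Z$'s, the dichotomy according to the sign of $\ell-g$ matching the two branches of \eqref{NBB_recurrence}, and a local limit theorem for $\eps^{-1}\mathbf{P}_{\ell,g}(\lceil\mathsf{t}/\eps\rceil)$ proved from the ${}_8\phi_7$ representations \eqref{P_ell_g_rewrite_first_case_87}--\eqref{P_ell_g_rewrite_second_case_87}, with the $\mu^L$ factor controlling the tails. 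One small correction: in the limit the very-well-poised ${}_8\phi_7$ degenerates termwise \emph{directly} to the ${}_2F_1$ appearing in \eqref{nbb1_distribution_definition} --- six of the eight-over-seven Pochhammer ratios in each term tend to $1$ --- so no subsequent ``${}_8F_7$ to ${}_2F_1$'' transformation is needed (nor is there a standard one to invoke).
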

The proof occupies the rest of the subsection.
We will use the following two facts
proven in
\cite{CorwinBarraquand2015Beta}
(Lemmas 2.2 and 2.3):
\begin{proposition}
\label{prop:q_to_1_convergence_BC16}
\mbox{}
	\begin{enumerate}[\bf1.\/]
		\item
			For $r, q \in (0, 1)$ and $x, y >0$,
			\begin{equation*}
				\frac{(r q^{y}; q)_{\infty}}{(r q^{x}; q)_{\infty}} \to (1-r)^{x - y} \qquad \text{as} \ q \to 1.
			\end{equation*}
		\item If $X_\eps$ is distributed as
			$\varphi_{e^{-\eps}, e^{-\bar{\mu} \eps}, e^{-\bar{\nu} \eps}}(\cdot \mid \infty)$,
			then $\exp(-\eps X_{\eps})$ converges in distribution as $\eps \to 0+$ to
			$\mathscr{B}_1(0, \bar{\mu}, \bar{\nu}-\bar{\mu})$.
	\end{enumerate}
\end{proposition}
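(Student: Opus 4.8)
The proposition consists of two independent asymptotic claims. For the first, fix $r\in(0,1)$ and $x,y>0$ and set $f(q):=(rq^{y};q)_\infty/(rq^{x};q)_\infty$; the plan is to pass to logarithms and collapse everything into a single power series in $r$. Writing $\log(1-u)=-\sum_{m\ge1}u^{m}/m$ in each factor of $(rq^{y};q)_\infty=\prod_{k\ge0}(1-rq^{y+k})$, and likewise with $x$, and interchanging the resulting double sum (which is absolutely convergent for each fixed $q<1$, being dominated by $(1-q)^{-1}\sum_{m\ge1}r^{m}/m$), I would obtain
$$\log f(q)=\sum_{k\ge0}\bigl[\log(1-rq^{y+k})-\log(1-rq^{x+k})\bigr]=-\sum_{m\ge1}\frac{r^{m}}{m}\,\frac{q^{my}-q^{mx}}{1-q^{m}}.$$
For each fixed $m$ one has $\frac{q^{my}-q^{mx}}{1-q^{m}}\to x-y$ as $q\to1^{-}$, since $\frac{1-q^{ma}}{1-q^{m}}\to a$ for every $a>0$; and $\bigl|\frac{q^{my}-q^{mx}}{1-q^{m}}\bigr|\le\lceil\max(|x-y|,1)\rceil$ uniformly over $q\in(0,1)$ and $m\ge1$, because, writing $|x-y|=n+\theta$ with $n\in\mathbb{Z}_{\ge0}$ and $\theta\in[0,1)$, one has $q^{m\min(x,y)}\le1$ and $\frac{1-q^{m|x-y|}}{1-q^{m}}=\frac{1-q^{mn}}{1-q^{m}}+q^{mn}\frac{1-q^{m\theta}}{1-q^{m}}\le n+1$. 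Dominated convergence for series then yields $\log f(q)\to-(x-y)\sum_{m\ge1}r^{m}/m=(x-y)\log(1-r)$, i.e.\ $f(q)\to(1-r)^{x-y}$.

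For the second claim, recall from \eqref{phi_qmunu_infinity_definition} (used, e.g., in the proof of \Cref{lemma:finiteness_q_moments}) that $\varphi_{q,\mu,\nu}(\ell\mid\infty)=\frac{(\mu;q)_\infty}{(\nu;q)_\infty}\mu^{\ell}\frac{(\nu/\mu;q)_\ell}{(q;q)_\ell}$. Since $\exp(-\eps X_\eps)$ takes values in the compact interval $[0,1]$, its weak convergence will follow once every moment $\mathbb{E}\bigl[(\exp(-\eps X_\eps))^{k}\bigr]$, $k\ge0$, is shown to converge to the corresponding moment of $\mathscr{B}_1(0,\bar\mu,\bar\nu-\bar\mu)$ (polynomials are dense in $C([0,1])$). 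With $q=e^{-\eps}$, $\mu=q^{\bar\mu}$, $\nu=q^{\bar\nu}$, the $q$-binomial theorem gives
$$\mathbb{E}\bigl[(\exp(-\eps X_\eps))^{k}\bigr]=\sum_{\ell\ge0}q^{k\ell}\varphi_{q,\mu,\nu}(\ell\mid\infty)=\frac{(\mu;q)_\infty}{(\nu;q)_\infty}\,\frac{(\nu q^{k};q)_\infty}{(\mu q^{k};q)_\infty}=\frac{(\mu;q)_{k}}{(\nu;q)_{k}}=\prod_{j=0}^{k-1}\frac{1-q^{\bar\mu+j}}{1-q^{\bar\nu+j}}.$$
Since $\frac{1-q^{a}}{1-q}\to a$ as $\eps\to0$, the right-hand side tends to $\prod_{j=0}^{k-1}\frac{\bar\mu+j}{\bar\nu+j}=\frac{\Gamma(\bar\mu+k)\,\Gamma(\bar\nu)}{\Gamma(\bar\mu)\,\Gamma(\bar\nu+k)}$, which is exactly the $k$-th moment of $\mathrm{Beta}(\bar\mu,\bar\nu-\bar\mu)$; and $\mathscr{B}_1(0,\bar\mu,\bar\nu-\bar\mu)$ is this Beta distribution, since the first argument of $\mathscr{B}_1$ is a deformation parameter which vanishes here (cf.\ \eqref{gen_beta_def}). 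Thus all moments match, and moment convergence on $[0,1]$ gives $\exp(-\eps X_\eps)\Rightarrow\mathscr{B}_1(0,\bar\mu,\bar\nu-\bar\mu)$.

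The argument is essentially routine: the $q$-binomial summation, the limits $\frac{1-q^{a}}{1-q}\to a$, and the implication ``convergence of all moments on $[0,1]$ $\Rightarrow$ weak convergence'' are all standard. The only point needing a bit of care is the uniform-in-$(m,q)$ bound underlying the dominated-convergence step in the first part, since $x-y$ need not be an integer; the elementary splitting of $|x-y|$ into integer and fractional parts, as above, handles it.
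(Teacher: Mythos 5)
Your argument is correct. Note that the paper offers no proof of this proposition at all — it simply cites \cite{CorwinBarraquand2015Beta}, Lemmas~2.2 and~2.3 — so you have in effect supplied the missing proof. Both parts check out: the log-series manipulation in part~1, the interchange of sums (justified by the bound $1/(1-q^m)\le 1/(1-q)$), the pointwise limit $\tfrac{q^{my}-q^{mx}}{1-q^m}\to x-y$, and the $m,q$-uniform bound via the integer/fractional split of $|x-y|$ all hold, and dominated convergence closes it. For part~2, the $q$-binomial summation giving $\mathbb{E}[q^{kX_\eps}]=(\mu;q)_k/(\nu;q)_k$, the factorwise limit $\prod_{j=0}^{k-1}\tfrac{\bar\mu+j}{\bar\nu+j}$, and the identification with the $k$-th moment of $\mathrm{Beta}(\bar\mu,\bar\nu-\bar\mu)=\mathscr{B}_1(0,\bar\mu,\bar\nu-\bar\mu)$ are all correct, and since the random variables live in the compact interval $[0,1]$ the method of moments legitimately upgrades moment convergence to weak convergence. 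One stylistic remark: in part~1 you could alternatively note that the claim is really a statement about $\tfrac{(r;q)_\infty}{(rq^a;q)_\infty}\to(1-r)^{-a}$ for $a>0$ (apply it with $a=y$ and $a=x$ and divide), so the pairwise cancellation of the two infinite products is not essential; but your version is also fine and slightly more symmetric.
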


Clearly, $X(i,0)=Z(i,0)=1$.
The second part of \Cref{prop:q_to_1_convergence_BC16} implies that
$X(1,t)=q^{-x_1(t)-1}$ converges to $Z(1,t)$, since the
first $q$-Hahn PushTASEP particle $x_1(t)$
follows a random walk with jump distribution
$\varphi_{q,\mu,\nu}(\cdot\mid \infty)$.

To complete the proof, we need to show that conditionally on
\begin{equation*}
	x_{i-1}(t-1) + i-1 \sim  \frac{\log X}{\eps},
	\qquad
	x_{i}(t-1)+i \sim  \frac{\log Y}{\eps},
	\qquad
	x_{i-1}(t)+i-1 \sim \frac{\log Z}{\eps},
\end{equation*}
\begin{enumerate}[\bf({case} 1)\/]
	\item If $Y<Z$,
		$X(i,t)/Y$ converges in distribution to
		$\mathscr{NBB}_1\left( 2\bar{\nu}-1, \frac{Z^{-1} - X^{-1}}{Y^{-1} - X^{-1}}, \frac{Y}{Z}, \bar{\mu}, \bar{\nu}-\bar{\mu}\right)$;
	\item If $Y>Z$,
		$X(i,t)/Z$
		converges in distribution to
		$\mathscr{NBB}_1\left( 2\bar{\nu}-1, \frac{Y^{-1} - X^{-1}}{Z^{-1} - X^{-1}}, \frac{Z}{Y}, \bar{\mu}, \bar{\nu} \right)$.
\end{enumerate}
As before, let us use the notation
\begin{equation*}
	\ell = x_{i-1}(t-1) - x_{i-1}(t), \qquad
	g = x_{i-1}(t-1) - x_{i}(t-1)-1,\qquad
	\quad\text{and}\quad
	L =   x_{i}(t-1) - x_{i}(t).
\end{equation*}
We will prove the above two cases separately
using formulas
\eqref{P_ell_g_rewrite_first_case_87},
\eqref{P_ell_g_rewrite_second_case_87}
for the update probabilities $\mathbf{P}_{\ell,g}(L)$.

\medskip\noindent\textbf{Proof of case 1.}
The case $Y<Z$ corresponds to representation
\eqref{P_ell_g_rewrite_first_case_87}
for $\mathbf{P}_{\ell,g}(L)$.
It suffices to show that for a fixed $\mathsf{t} > 0$,
\begin{multline}
	\label{limit_l<g}
	\lim_{\eps \to 0+}\eps^{-1}
	\mathbf{P}_{\ell,g}(\lceil \mathsf{t}/\eps \rceil)
	=
	\frac{e^{-\mathsf{t} \bar{\mu}}(1-e^{-\mathsf{t}})^{\bar{\nu}-\bar{\mu}-1}}
	{(1-e^{-\mathsf{t}}Y/Z)^{\bar{\nu}}}
	\frac{(1-Y/Z)^{\bar{\mu}}\Gamma(\bar{\nu})}{\Gamma(\bar{\mu}) \Gamma(\bar{\nu}-\bar{\mu})}
	\left(1-\frac{Z^{-1} - X^{-1}}{Y^{-1} - X^{-1}}\right)^{2\bar{\nu}-1}
	\\
	\times
	{_{2}F_{1}}\left(
		2\bar{\nu}-1, \bar{\nu}; \bar{\nu}-\bar{\mu}; \frac{X/Z - 1}{X/Y-1} \cdot \frac{1 - e^{-\mathsf{t}}}{1-e^{-\mathsf{t}}Y/Z}
	\right).
\end{multline}
Rewrite the product of the $q$-Pochhammer symbols preceding ${_{8}\phi_{7}}$
in the expression \eqref{P_ell_g_rewrite_first_case_87} as (in this case we use the notation
$L=\lceil \mathsf{t}/\eps \rceil$)
\begin{equation*}
	\frac{\mu^{L} (\nu/\mu; q)_{L} (\mu; q)_{\infty}}
	{(q;q)_{L}(\nu; q)_{\infty}}
	\,
	\frac{(\nu^{2}q^{g}; q)_{\infty}}
	{(q^{g+1}; q)_{\infty}}
	\,
	\frac{(q^{g-\ell+1}; q)_{\infty}}
	{(\mu \nu q^{g-\ell}; q)_{\infty}}
	\,
	\frac{(\nu q^{g-\ell}; q)_{\infty}}
	{(\nu^{2} q^{g-\ell}; q)_{\infty}}
	\,
	\frac{(\nu^{2} q^{L+g-\ell}; q)_{\infty}}
	{(\nu q^{L+g-\ell}; q)_{\infty}}.
\end{equation*}
For $L = \lceil \mathsf{t}/\eps \rceil$ the second part of \Cref{prop:q_to_1_convergence_BC16} implies
\begin{align*}
	\frac{\mu^{L} (\nu/\mu; q)_{L} (\mu; q)_{\infty}}{\eps(q;q)_{L}(\nu; q)_{\infty}}
	\to
	\frac{e^{-\mathsf{t} \bar{\mu}}(1-e^{-\mathsf{t}})^{\bar{\nu}-\bar{\mu}-1}\Gamma(\bar{\nu})}
	{\Gamma(\bar{\mu}) \Gamma(\bar{\nu}-\bar{\mu})},
\end{align*}
while the first part of \Cref{prop:q_to_1_convergence_BC16} leads to
\begin{align*}
	\frac{(\nu^{2}q^{g}; q)_{\infty}}
	{(q^{g+1}; q)_{\infty}}
	\to
	(1 - Y/X)^{1 - 2\bar{\nu}},
	\qquad &
	\frac{(q^{g-\ell+1}; q)_{\infty}}{(\mu \nu q^{g-\ell}; q)_{\infty}}
	\to
	(1-Y/Z)^{\bar{\mu}+\bar{\nu}-1}, \\
	\frac{(\nu q^{g-\ell}; q)_{\infty}}{(\nu^{2} q^{g-\ell}; q)_{\infty}} \to (1-Y/Z)^{\bar{\nu}},
	\qquad &
	\frac{(\nu^{2} q^{L+g-\ell}; q)_{\infty}}{(\nu q^{L+g-\ell}; q)_{\infty}} \to (1-e^{-\mathsf{t}}Y/Z)^{-\bar{\nu}}.
\end{align*}
The $k$-th term in the summation for ${_{8}\phi_{7}}$ in the expression \eqref{P_ell_g_rewrite_first_case_87} is
\begin{multline*}
	\frac{(q^{-\ell}; q)_{k}(q^{-L}; q)_{k}}
	{(\nu^{2}q^{L+g-\ell}; q)_{k} (\nu^{2}q^{g}; q)_{k}}
	\,
	\frac{(\nu^{2}q^{-1}; q)_{k}(\nu; q)_{k}}
	{(q; q)_{k}(\nu \mu^{-1}; q)_{k}}
	\left( \frac{q^{L+g+1}}{\mu}\right)^{k}
	\\\times
	\frac{(\nu^{2} q^{g-\ell-1}; q)_{k}(\nu q^{(g-\ell+1)/2}; q)_{k} (-\nu q^{(g-\ell+1)/2}; q)_{k}  (\mu \nu q^{g-\ell}; q)_{k} }
	{(\nu q^{(g-\ell-1)/2}; q)_{k} (- \nu q^{(g-\ell-1)/2}; q)_{k} (q^{g-\ell+1}; q)_{k}  (\nu q^{g-\ell}; q)_{k}}.
\end{multline*}
For fixed $k$ we have the following convergence:
\begin{align*}
	&
	\frac{(\nu^{2} q^{g-\ell-1}; q)_{k}(\nu q^{(g-\ell+1)/2}; q)_{k} (-\nu q^{(g-\ell+1)/2}; q)_{k}  (\mu \nu q^{g-\ell}; q)_{k} }
	{(\nu q^{(g-\ell-1)/2}; q)_{k} (- \nu q^{(g-\ell-1)/2}; q)_{k} (q^{g-\ell+1}; q)_{k}  (\nu q^{g-\ell}; q)_{k}}
	\to 1,
	\\
	&
	\frac{(q^{-\ell}; q)_{k}(q^{-L}; q)_{k}}
	{(\nu^{2}q^{L+g-\ell}; q)_{k} (\nu^{2}q^{g}; q)_{k}}
	\,
	\left(
		\frac{q^{L+g+1}}{\mu}
	\right)^{k}
	\to
	\frac{(X/Z-1)^{k} (e^{\mathsf{t}}-1)^{k}}{(1-e^{-\mathsf{t}}Y/Z)^{k}(1-Y/X)^{k}}\, (e^{-\mathsf{t}}Y/X)^{k},
	\\
	&\hspace{80pt}
	\frac{(\nu^{2}q^{-1}; q)_{k}(\nu; q)_{k}}{(q; q)_{k}(\nu \mu^{-1}; q)_{k}}
	\to
	\frac{(2 \bar{\nu}-1)_{k}(\bar{\nu})_{k}}{k!(\bar{\nu}-\bar{\mu})_{k}}.
\end{align*}
Hence the whole ${_{8}\phi_{7}}$
converges to
${_{2}F_{1}}\left(2\bar{\nu}-1, \bar{\nu}; \bar{\nu}-\bar{\mu}; \frac{X/Z - 1}{X/Y-1} \cdot \frac{1 - e^{-\mathsf{t}}}{1-e^{-\mathsf{t}}Y/Z}\right)$.
Combining everything together gives us \eqref{limit_l<g}, which establishes the first case.

\medskip\noindent\textbf{Proof of case 2.}
For the case $Y>Z$ we use representation
\eqref{P_ell_g_rewrite_second_case_87}
for $\mathbf{P}_{\ell,g}(L)$.
It suffices to show that for a fixed $\mathsf{t} > \log(Y/Z),$\footnote{This condition corresponds to $L\ge \ell-g$.}
\begin{multline}
\label{limit_l>g}
\lim_{\eps \to 0}
\eps^{-1}
\mathbf{P}_{\ell,g}(\lceil \mathsf{t}/\eps \rceil+\ell-g)
=
\frac{e^{-\mathsf{t} \bar{\mu}}(1-e^{-\mathsf{t}})^{\bar{\nu}-1}}{(1-e^{-\mathsf{t}}Z/Y)^{\bar{\nu}+\bar{\mu}}} \, \frac{(1-Z/Y)^{\bar{\mu}}\Gamma(\bar{\nu} + \bar{\mu})}{\Gamma(\bar{\mu}) \Gamma(\bar{\nu})}
\left(1-\frac{Y^{-1} - X^{-1}}{Z^{-1} - X^{-1}}\right)^{2\bar{\nu}-1}
\\\times
{_{2}F_{1}}\left(2\bar{\nu}-1, \bar{\nu}+\bar{\mu}; \bar{\nu}; \frac{X/Y - 1}{X/Z-1} \cdot \frac{1 - e^{-\mathsf{t}}}{1-e^{-\mathsf{t}}Z/Y}\right).
\end{multline}
Rewrite the product of $q$-Pochhammer symbols preceding ${_{8}\phi_{7}}$ in
\eqref{P_ell_g_rewrite_second_case_87} as
\begin{multline*}
	\frac{\mu^{L-\ell+g} (\nu/\mu; q)_{L-\ell+g} (\mu; q)_{\infty}}{(q;q)_{L-\ell+g}(\nu; q)_{\infty}}\, \frac{(\nu; q)_{\infty}^{2}}{(\mu \nu; q)_{\infty}(\nu/\mu; q)_{\infty}} \, \frac{(q^{\ell-g+1}; q)_{\infty}(q^{\ell-q}\nu/\mu; q)_{\infty}}{(\nu q^{\ell-g}; q)_{\infty}(\nu^{2}q^{\ell-g}; q)_{\infty}}
	\\\times
	\frac{(q^{L+g-\ell}\nu/\mu; q)_{\infty}}{(\nu q^{L+g-\ell}; q)_{\infty}} \, \frac{(\nu^{2}q^{L}; q)_{\infty}}{(q^{L}\nu/\mu; q)_{\infty}}\, \frac{(\nu^{2}q^{\ell}; q)_{\infty}}{(q^{\ell+1}; q)_{\infty}}.
\end{multline*}
With the notation $L-\ell+g = \lceil \mathsf{t}/\eps \rceil$, the second part of \Cref{prop:q_to_1_convergence_BC16} implies
\begin{align*}
	\frac{\mu^{L-\ell+g} (\nu/\mu; q)_{L-\ell+g} (\mu; q)_{\infty}}{\eps(q;q)_{L-\ell+g}(\nu; q)_{\infty}}
	\to
	\frac{e^{-\mathsf{t} \bar{\mu}}(1-e^{-\mathsf{t}})^{\bar{\nu}-\bar{\mu}-1}\Gamma(\bar{\nu})}{\Gamma(\bar{\mu}) \Gamma(\bar{\nu}-\bar{\mu})}.
\end{align*}
The first part of \Cref{prop:q_to_1_convergence_BC16} implies
that
\begin{align*}
	\frac{(q^{\ell-g+1}; q)_{\infty}}{(\nu q^{\ell-g}; q)_{\infty}} \to (1-Z/Y)^{\bar{\nu}-1},
	\qquad &
	\frac{(q^{\ell-q}\nu/\mu; q)_{\infty}}{(\nu^{2}q^{\ell-g}; q)_{\infty}} \to (1-Z/Y)^{\bar{\mu}+\bar{\nu}},
	\\
	\frac{(\nu^{2}q^{\ell}; q)_{\infty}}{(q^{\ell+1}; q)_{\infty}} \to (1-Z/X)^{1 - 2\bar{\nu}},
	\qquad&
	\frac{(q^{L+g-\ell}\nu/\mu; q)_{\infty}}{(\nu q^{L+g-\ell}; q)_{\infty}} \to (1-e^{-\mathsf{t}})^{\bar{\mu}},
	\\\
	&
	\frac{(\nu^{2}q^{L}; q)_{\infty}}{(q^{L}\nu/\mu; q)_{\infty}} \to (1-e^{-\mathsf{t}}Z/Y)^{-\bar{\mu}-\bar{\nu}}.
\end{align*}

Recall the $q$-Gamma function
\begin{align*}
	\Gamma_{q}(x)=\frac{(q; q)_{\infty}}{(q^{x}; q)_{\infty}}(1-q)^{1-x},
\end{align*}
which converges to the ordinary Gamma function as $q\nearrow 1$.
Then
\begin{align*}
	\frac{(\nu; q)_{\infty}^{2}}{(\mu \nu; q)_{\infty}(\nu/\mu; q)_{\infty}}
	=
	\frac{\Gamma_{q}(\bar{\mu} + \bar{\nu})\Gamma_{q}(\bar{\nu}-\bar{\mu})}
	{\Gamma_{q}(\bar{\nu})^{2}}
	\to
	\frac{\Gamma(\bar{\mu} + \bar{\nu})\Gamma(\bar{\nu}-\bar{\mu})}{\Gamma(\bar{\nu})^{2}}.
\end{align*}
The $k$-th term in the summation for ${_{8}\phi_{7}}$ in \eqref{P_ell_g_rewrite_second_case_87} has the form:
\begin{multline*}
	\frac{(q^{-g}; q)_{k}(q^{-L+\ell-g}; q)_{k}}{(\nu^{2}q^{L}; q)_{k} (\nu^{2}q^{\ell}; q)_{k}}
	\,
	\frac{(\nu^{2}q^{-1}; q)_{k}(\nu \mu; q)_{k}}{(q; q)_{k}(\nu; q)_{k}}
	\,
	 \left( \frac{q^{L+g+1}}{\mu}\right)^{k}
	\\
	\times
	\frac{
		(\nu^{2} q^{\ell-g-1}; q)_{k}(\nu q^{(\ell-g+1)/2}; q)_{k}
		(-\nu q^{(\ell-g+1)/2}; q)_{k}(\nu q^{\ell-g}; q)_{k}
	}
	{
		(\nu q^{(\ell-g-1)/2}; q)_{k}
		(-\nu q^{(\ell-g-1)/2}; q)_{k}
		(q^{\ell-g+1}; q)_{k}(\nu q^{\ell-g}/\mu; q)_{k}
	}.
\end{multline*}
For fixed $k$ we have the following behavior:
\begin{align*}
	&\frac{
		(\nu^{2} q^{\ell-g-1}; q)_{k}(\nu q^{(\ell-g+1)/2}; q)_{k}
		(-\nu q^{(\ell-g+1)/2}; q)_{k}(\nu q^{\ell-g}; q)_{k}
	}
	{
		(\nu q^{(\ell-g-1)/2}; q)_{k} (-\nu q^{(\ell-g-1)/2}; q)_{k} (q^{\ell-g+1}; q)_{k}(\nu q^{\ell-g}/\mu; q)_{k}
	}
	\to 1,
	\\
	&
	\frac{(q^{-g}; q)_{k}(q^{-L+\ell-g}; q)_{k}}{(\nu^{2}q^{L}; q)_{k} (\nu^{2}q^{\ell}; q)_{k}}
	\,
	\left( \frac{q^{L+g+1}}{\mu}\right)^{k}
	\to
	 \frac{(X/Y-1)^{k} (e^{\mathsf{t}}-1)^{k}}{(1-e^{-\mathsf{t}}Z/Y)^{k}(1-Z/X)^{k}}
	 \,
	(e^{-\mathsf{t}}Z/X)^{k},
	\\ &
	\hspace{70pt}
	\frac{(\nu^{2}q^{-1}; q)_{k}(\nu \mu; q)_{k}}{(q; q)_{k}(\nu; q)_{k}}
	\to
	\frac{(2 \bar{\nu}-1)_{k}(\bar{\nu}+\bar{\mu})_{k}}
	{k!(\bar{\nu})_{k}}.
\end{align*}
Hence the whole ${_{8}\phi_{7}}$
expression in \eqref{P_ell_g_rewrite_second_case_87}
converges to ${_{2}F_{1}}\left(2\bar{\nu}-1, \bar{\nu}+\bar{\mu}; \bar{\nu}; \frac{X/Y-1}{X/Z-1} \cdot
\frac{1 - e^{-\mathsf{t}}}{1-e^{-\mathsf{t}}Z/Y}\right)$.
Combining everything together gives us \eqref{limit_l>g}.
This completes the proof of \Cref{thm:beta_convergence}.

\subsection{Contour integral observables of the beta model}
\label{sub:beta_contour_integrals}

The nested contour integral expressions for the $q$-moments of the
$q$-Hahn TASEP produce (in the $q\to1$ scaling limit) contour
integral observables for the process $\{Z(i,t)\}$.
For $\vec n=(n_1\ge n_2\ge \ldots\ge n_k\ge0 )$ and $t\in \mathbb{Z}_{\ge0}$
define $U(t;\vec n):=\mathop{\mathbb{E}}\bigl[\prod_{i=1}^{k} Z(n_i,t)^{-1}\bigr]=\mathop{\mathbb{E}}\bigl[\prod_{i=1}^{k} \tilde{Z}(n_i,t)\bigr]$.
\begin{proposition}
	\label{prop:beta_moments}
	When $\bar{\mu}>k$, we have
	\begin{equation}\label{beta_contour_moments}
		U(t;\vec n)=
		\frac{1}{(2\pi\sqrt{-1})^k}
		\int \ldots\int
		\prod_{1\le A<B\le k}\frac{w_A-w_B}{w_A-w_B-1}
		\prod_{j=1}^{k}
		\left(\frac{\bar{\nu}+w_j}{w_j} \right)^{n_j}
		\left( \frac{\bar{\nu}-1-w_j}{\bar{\mu}-1-w_j} \right)^{t}
		\frac{dw_j}{\bar{\nu}+w_j}.
	\end{equation}
	Here
	the contours are simple closed curves around $0$
	which do not encircle
	$\bar{\mu}-1$
	or $-\bar{\nu}$,
	and such that the $w_A$ contour encircles the $w_B+1$ one for all $A<B$.
\end{proposition}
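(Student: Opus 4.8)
The plan is to obtain \eqref{beta_contour_moments} by passing to the $\eps\searrow0$ limit in the $q$-moment formula of \Cref{thm:push_q_moments_in_the_text}. Under the scaling \eqref{beta_parameters_scaling} we have $q=e^{-\eps}$, $\mu=q^{\bar\mu}$, $\nu=q^{\bar\nu}$, so the hypothesis $\bar\mu>k$ is exactly the hypothesis $\mu<q^k$ of \Cref{lemma:finiteness_q_moments,thm:push_q_moments_in_the_text}; hence for every $\eps>0$ the $q$-moment $u(t;\vec n)=\mathop{\mathbb{E}}\bigl[\prod_{j=1}^{k}q^{x_{n_j}(t)+n_j}\bigr]=\mathop{\mathbb{E}}\bigl[\prod_{j=1}^{k}X(n_j,t)^{-1}\bigr]$ is finite and equals the right-hand side of \eqref{push_q_moment_formula_in_the_text} (we take $N\ge n_1$, which is harmless since the $N$-particle dynamics are compatible across $N$). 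It then remains to show that both sides of \eqref{push_q_moment_formula_in_the_text} converge as $\eps\searrow0$: the right-hand side to the right-hand side of \eqref{beta_contour_moments}, and the left-hand side to $U(t;\vec n)$.

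For the right-hand side I substitute $z_j=e^{-\eps w_j}$, an orientation-preserving holomorphic change of variables carrying a neighborhood of $w_j=0$ onto a neighborhood of $z_j=1$; it sends the points $z_j=1,\ \mu/q,\ 1/\nu$ to $w_j=0,\ \bar\mu-1,\ -\bar\nu$ and sends $qz_B$ to $w_B+1$. Choose the $w_j$-contours to be nested positively oriented circles about $0$ of radii $\rho_1>\rho_2>\cdots>\rho_k$ with $\rho_A>1+\rho_{A+1}$ for each $A$ and $\rho_1<\bar\mu-1$; this is possible precisely because $\bar\mu>k$, and since $\bar\nu>\bar\mu$ the points $\bar\mu-1$ and $-\bar\nu$ then lie outside the $w_1$-contour. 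For all sufficiently small $\eps$ the images $z_j=e^{-\eps w_j}$ are admissible contours in \Cref{thm:push_q_moments_in_the_text}. A direct expansion gives, uniformly on these fixed compact contours, the limits $\tfrac{1-\nu z_j}{1-z_j}\to\tfrac{\bar\nu+w_j}{w_j}$, $\tfrac{1-\nu q^{-1}z_j^{-1}}{1-\mu q^{-1}z_j^{-1}}\to\tfrac{\bar\nu-1-w_j}{\bar\mu-1-w_j}$ and $\tfrac{z_A-z_B}{z_A-qz_B}\to\tfrac{w_A-w_B}{w_A-w_B-1}$, while $\tfrac{1}{1-\nu z_j}=\tfrac{1}{\eps(\bar\nu+w_j)}\bigl(1+O(\eps)\bigr)$, $\tfrac{dz_j}{z_j}=-\eps\,dw_j$ and $(-1)^{k}q^{k(k-1)/2}\to(-1)^{k}$; the $k$ factors of $\eps$ and the accompanying signs cancel, leaving exactly the integrand and prefactor of \eqref{beta_contour_moments}. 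Since all factors are bounded with denominators bounded away from zero uniformly in small $\eps$ on the fixed contours, dominated convergence yields the claimed limit of \eqref{push_q_moment_formula_in_the_text}.

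The remaining point, which I expect to be the crux, is to identify $\lim_{\eps\searrow0}u(t;\vec n)$ with $U(t;\vec n)=\mathop{\mathbb{E}}\bigl[\prod_{j}Z(n_j,t)^{-1}\bigr]$. By \Cref{thm:beta_convergence} and the continuous mapping theorem, $\prod_{j}X(n_j,t)^{-1}\Rightarrow\prod_{j}Z(n_j,t)^{-1}$, so it suffices to prove that the family $\bigl\{\prod_{j}X(n_j,t)^{-1}\bigr\}_{\eps>0}$ is uniformly integrable. Fix $p$ with $k<p<\bar\mu$. Since $q^{x_i(t)+i}$ is nondecreasing in $i$ (because $x_{i-1}(t)>x_i(t)$) and $\vec n$ is weakly decreasing, $\prod_{j=1}^{k}X(n_j,t)^{-1}\le X(n_1,t)^{-k}$, hence $\bigl(\prod_{j=1}^{k}X(n_j,t)^{-1}\bigr)^{p/k}\le X(n_1,t)^{-p}$ with $p/k>1$; so it is enough to bound $\mathop{\mathbb{E}}\bigl[X(n_1,t)^{-p}\bigr]=\mathop{\mathbb{E}}\bigl[e^{p\eps D}\bigr]$ uniformly for small $\eps$, where $D:=-(x_{n_1}(t)+n_1)\ge0$ is the total leftward displacement of $x_{n_1}$ after $t$ steps. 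One does this by dominating $D$ by a sum of $O(n_1 t)$ jump variables, each conditionally of $\varphi_{q,\mu,\nu}(\cdot\mid\infty)$-type or $\psi$-type, whose conditional exponential moments stay bounded as $\eps\searrow0$ for $p<\bar\mu$ (for a $\varphi$-jump $\ell$ the $q$-binomial theorem gives $\mathop{\mathbb{E}}[e^{p\eps\ell}]=\tfrac{(\mu;q)_\infty(\nu q^{-p};q)_\infty}{(\nu;q)_\infty(\mu q^{-p};q)_\infty}\to\tfrac{\Gamma(\bar\mu-p)\Gamma(\bar\nu)}{\Gamma(\bar\mu)\Gamma(\bar\nu-p)}$ via $\Gamma_q\to\Gamma$, and analogously for a $\psi$-jump), so by iterated conditioning $\mathop{\mathbb{E}}\bigl[X(n_1,t)^{-p}\bigr]$ is bounded uniformly in small $\eps$; this is essentially the estimate used in \cite{CorwinBarraquand2015Beta} for the $q$-Hahn TASEP. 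The main obstacle is making this bound genuinely uniform in $\eps$: one cannot simply quote \Cref{lemma:finiteness_q_moments}, whose underlying geometric estimate degenerates as $q\to1$, and some care is needed so that the exponential tails do not conspire with the growing number of jumps. Granting the uniform integrability, $\lim_{\eps\searrow0}u(t;\vec n)=U(t;\vec n)$, and together with the limit of the right-hand side computed above this gives \eqref{beta_contour_moments}.
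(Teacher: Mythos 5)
Your proposal is correct and takes essentially the same route as the paper's proof: substitute $z_j=q^{w_j}=e^{-\eps w_j}$ into the formula of \Cref{thm:push_q_moments_in_the_text} (the hypothesis $\bar\mu>k$ being exactly $\mu<q^k$), verify the factorwise limits of the integrand on the fixed nested $w$-contours, and identify $\lim_{\eps\to0}u(t;\vec n)=U(t;\vec n)$ via \Cref{thm:beta_convergence}. The only divergence is that the paper asserts this last convergence of moments directly from \Cref{thm:beta_convergence}, whereas you make explicit (and sketch) the uniform-integrability estimate behind it --- a point the paper leaves implicit; your sketch is reasonable, with the caveat that under the scaling \eqref{beta_parameters_scaling} one has $\nu>0$, so the factor $\varphi_{q^{-1},q^{g},\mu\nu q^{g-1}}$ in $\mathbf{P}_{\ell,g}$ is signed and the ``domination of $D$ by jump variables'' is best run as a bound on the conditional exponential moments $\sum_{L\ge0}q^{-pL}\,\mathbf{P}_{\ell,g}(L)$ (e.g.\ via the manifestly nonnegative ${}_8\phi_7$ forms \eqref{P_ell_g_rewrite_first_case_87}, \eqref{P_ell_g_rewrite_second_case_87}) rather than as a pathwise coupling.
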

\begin{proof}
	\Cref{thm:beta_convergence}
	implies that $u(t;\vec n)\to U(t;\vec n)$
	under the scaling \eqref{beta_parameters_scaling}.
	Let $w_i$ be the contours as in \eqref{beta_contour_moments},
	and set $z_j=q^{w_j}=e^{-\eps w_j}$.
	Then the contours $z_j$ are exactly the ones in
	\Cref{thm:push_q_moments_in_the_text}. As $\eps\to0$, we have the following convergence in the integrand:
	\begin{align*}
	\frac{z_A-z_B}{z_A- q z_{B}}
	\to
	\frac{w_{A}-w_{B}}{w_{A}-w_{B}-1}, &
	\qquad
	\frac{1-\nu z_{j}}{1-z_{j}}
	\to
	\frac{\bar{\nu} + w_{j}}{w_{j}},
	\\
	\frac{1- \nu q^{-1} z_{j}^{-1}}{1- \mu q^{-1} z_{j}^{-1}}
	\to
	\frac{\bar{\nu}-1-w_{j}}{\bar{\mu}-1-w_{j}}, &
	\qquad
	\frac{d z_{j}}{z_{j}(1-\nu z_{j})}
	\to
	 -\frac{d w_{j}}{\bar{\nu} + w_{j}}.
	\end{align*}
	This completes the proof.
	Note that the restriction $\bar{\mu}>k$ in \eqref{beta_contour_moments}
	comes from $\mu<q^k$ in \Cref{thm:push_q_moments_in_the_text}.
\end{proof}

Again, using the moments of \Cref{prop:beta_moments}
or taking the scaling limit as $q\nearrow1$ of \Cref{conj:conjecture_push_Fredholm},
we can write down a conjectural Fredholm determinantal
expression for the Laplace transform of $Z(n,t)$:
\begin{conjecture}
	\label{conj:conjecture_push_Fredholm_beta}
	For $\xi\in \mathbb{C}\setminus\mathbb{R}_{>0}$,
	we have
	\begin{equation*}
		\mathop{\mathbb{E}}
		\Bigl[
			e^{\xi Z(n,t)^{-1}}
		\Bigr]=
        \mathop{\mathbb{E}}
		\Bigl[
			e^{\xi \tilde{Z}(n,t)}
		\Bigr]=
		\det(I+K^{\mathscr{B}}_\xi),
	\end{equation*}
	where $K^{\mathscr{B}}_\xi$ is a kernel of an integral operator
	on a small circle around $0$:
	\begin{equation*}
		K^{\mathscr{B}}_\xi(v,v')=
		\frac{1}{2\pi\sqrt{-1}}
		\int_{-\infty\sqrt{-1}+\frac12}^{\infty\sqrt{-1}+\frac12}
		\frac{\pi}{\sin(\pi s)}(-\xi)^{s}\,
		\frac{g^\mathscr{B}(v)}{g^{\mathscr{B}}(v+s)}
		\,
		\frac{ds}{s+v-s'},
	\end{equation*}
	where
	\begin{equation*}
		g^{\mathscr{B}}(v)=
		\left( \frac{\Gamma(v)}{\Gamma(v+\bar\nu)} \right)^n
		\left( \frac{\Gamma(\bar{\nu}-w)}{\Gamma(\bar{\mu}-w)} \right)^t
		\Gamma(v+\bar{\nu})
		.
	\end{equation*}
\end{conjecture}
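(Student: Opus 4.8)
The plan is to obtain \Cref{conj:conjecture_push_Fredholm_beta} as the $q\nearrow1$ scaling limit of the $q$-level statement \Cref{conj:conjecture_push_Fredholm}, by exactly the procedure that turned \Cref{thm:push_q_moments_in_the_text} into \Cref{prop:beta_moments}. Substitute the beta scaling \eqref{beta_parameters_scaling}, put $\zeta=(1-q)\xi$, and use $q^{x_n(t)+n}=X(n,t)^{-1}\to Z(n,t)^{-1}=\tilde Z(n,t)$ (from \Cref{thm:beta_convergence}) together with the $q$-exponential limit $1/((1-q)y;q)_\infty\to e^{y}$, so that the left-hand observable $1/(\zeta q^{x_n(t)+n};q)_\infty$ of \Cref{conj:conjecture_push_Fredholm} converges to $e^{\xi\tilde Z(n,t)}$. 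Rescale the spectral contour by $w=q^{v}=e^{-\eps v}$, $w'=e^{-\eps v'}$. Using $(q^{a};q)_\infty=(q;q)_\infty(1-q)^{1-a}/\Gamma_q(a)$ and $\Gamma_q\to\Gamma$, one checks that the three ingredients $(-\zeta)^{s}$, $g(w)/g(q^{s}w)$, and $1/(q^{s}w-w')$ of the kernel $K_\zeta$ converge, up to mutually cancelling powers of $\eps$ and an overall $\eps^{-1}$ absorbed by the Jacobian $dw=-\eps\,e^{-\eps v}dv$, to $(-\xi)^{s}$, $g^{\mathscr B}(v)/g^{\mathscr B}(v+s)$, and $1/(s+v-v')$, respectively; the sign change $\sin(-\pi s)\mapsto\sin(\pi s)$ is the usual one, and the measure signs are handled exactly as in the proof of \Cref{prop:beta_moments} (where one matches the integrands of \Cref{thm:push_q_moments_in_the_text} and \eqref{beta_contour_moments} under $z_j=q^{w_j}$). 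This reduces the conjecture to \Cref{conj:conjecture_push_Fredholm} plus a justification that $\det(I+K_\zeta)\to\det(I+K^{\mathscr B}_\xi)$, i.e. a Hilbert--Schmidt convergence estimate for the rescaled kernels obtained by dominating the inner $s$-integral uniformly in $\eps$ via the exponential decay of $\pi/\sin(\pi s)$ and Stirling bounds on $g^{\mathscr B}$.

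A second, self-contained route avoids \Cref{conj:conjecture_push_Fredholm} and starts from \Cref{prop:beta_moments}. Taking $\vec n=(n,\dots,n)$ with $k$ equal entries, deform the nested contours in \eqref{beta_contour_moments} to a single common small circle around $0$, collecting residues at $w_A=w_B+1$; this is the standard moment-to-determinant manipulation following Borodin--Corwin, and it rewrites $\mathbb E\bigl[\tilde Z(n,t)^{k}\bigr]$ as a $k$-fold integral over one contour with a Cauchy-type kernel built from $g^{\mathscr B}$. One then forms the generating series $\sum_{k\ge0}\tfrac{\xi^{k}}{k!}\,\mathbb E\bigl[\tilde Z(n,t)^{k}\bigr]$, inserts a Mellin--Barnes representation of the power series, and recognizes the outcome, through $\det(I+K^{\mathscr B}_\xi)=\sum_{k\ge0}\tfrac{1}{k!}\int\cdots\int\det\bigl[K^{\mathscr B}_\xi(v_i,v_j)\bigr]\prod_i dv_i$, as the claimed Fredholm determinant.

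The main obstacle --- the same one flagged after \Cref{conj:conjecture_push_Fredholm} --- is that this last step is only formal. By the restriction $\bar\mu>k$ in \Cref{prop:beta_moments} (inherited from $\mu<q^{k}$ in \Cref{lemma:finiteness_q_moments} and ultimately forced by the pushing mechanism), $\mathbb E[\tilde Z(n,t)^{k}]$ is infinite for $k\ge\bar\mu$, so the series defining the Laplace transform diverges term by term past a point: the moment problem is not merely indeterminate but genuinely unavailable, and some new input is required. Two natural possibilities: (i) prove the identity first on a sub-family of parameters, or for a suitably tilted version of the process, where all moments are finite, and then analytically continue in $(\bar\mu,\bar\nu)$ and in $\xi\in\mathbb C\setminus\mathbb R_{>0}$ --- this mirrors the remark that one needs ``a good notion of analytic continuation from known Fredholm determinantal formulas'', and could use as its rigorous base the $q$-Whittaker-measure proof of the $\nu=0$ case \cite{BorodinCorwinFerrariVeto2013}; or (ii) replace $e^{\xi\tilde Z(n,t)}$ by the family of observables of Imamura--Mucciconi--Sasamoto \cite{imamura2017fluctuations}, \cite{imamura2019stationary}, designed precisely to be insensitive to the finiteness-of-moments obstruction while still yielding a Fredholm determinant of the stated form in the $\eps\to0$ limit. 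I expect resolving this obstacle, rather than any of the contour or kernel bookkeeping, to be the crux.

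Finally, independently of the derivation, one should check that the formula is well posed: that $K^{\mathscr B}_\xi$ defines a trace-class operator on a sufficiently small positively oriented circle around $0$ (chosen so that $\bar\mu-1$ and $-\bar\nu$ stay outside it and the line $\mathrm{Re}\,s=\tfrac12$ separates the relevant poles), that the $s$-integral converges by the exponential decay of $\pi/\sin(\pi s)$ together with Stirling estimates on $g^{\mathscr B}(v)/g^{\mathscr B}(v+s)$, and that $\det(I+K^{\mathscr B}_\xi)$ is analytic in $\xi\in\mathbb C\setminus\mathbb R_{>0}$ with $(-\xi)^{s}$ on its principal branch. A clean consistency check --- and a partial verification not requiring the moment obstruction to be resolved --- is to expand $\det(I+K^{\mathscr B}_\xi)$ in powers of $\xi$ near $0$ and confirm that the coefficient of $\xi^{k}$ equals $\tfrac{1}{k!}U(t;(n,\dots,n))$ from \Cref{prop:beta_moments} for $k<\bar\mu$, a direct residue computation. (We also note a typo in the displayed $g^{\mathscr B}$: the $w$ in the middle factor should read $v$, i.e. $\bigl(\Gamma(\bar\nu-v)/\Gamma(\bar\mu-v)\bigr)^{t}$, to match the factor $\bigl((\bar\nu-1-w_j)/(\bar\mu-1-w_j)\bigr)^{t}$ of \eqref{beta_contour_moments}.)
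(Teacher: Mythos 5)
This statement is a \emph{conjecture} in the paper: no proof is given, only the heuristic you describe --- it is presented as the $q\nearrow1$ scaling limit of \Cref{conj:conjecture_push_Fredholm} (itself conjectural) or, equivalently, as a formal resummation of the moments in \Cref{prop:beta_moments} --- and the paper flags exactly the obstruction you identify, namely that only the moments with $k<\bar\mu$ exist (inherited from $\mu<q^k$ in \Cref{lemma:finiteness_q_moments}), so the term-by-term generating-series argument is not a proof. Your proposal therefore takes essentially the same route as the paper and is equally incomplete in the same place; your suggested ways around it (analytic continuation from the $\nu=0$ $q$-Whittaker case, or the observables of Imamura et al.) coincide with the paper's own suggestions, so you have not gone beyond the paper's level of justification, but you have not misrepresented anything either. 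Your parenthetical observation is also correct: the $w$'s in the middle factor of $g^{\mathscr{B}}$ are a typo and should read $v$, to match $\bigl((\bar\nu-1-w_j)/(\bar\mu-1-w_j)\bigr)^{t}$ in \eqref{beta_contour_moments}.
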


\appendix

\section{Probability distributions from $q$-hypergeometric series}
\label{sec:q_hyp}

\subsection{Basic definitions}
\label{sub:q_hyp_notation}

Here we recall some basic facts about $q$-hypergeometric series.
Define the $q$-Pochhammer symbols
\begin{equation*}
	(a; q)_{n} =
	\begin{cases}
		1,&n=0;\\
		\prod_{i=1}^{n}(1 - a q^{i-1}), &n\ge1;\\
		\prod_{i=n}^{-1}(1-aq^{i})^{-1},&n\le -1,
	\end{cases}
	\qquad \text{and} \qquad
	(a; q)_{\infty} = \prod_{i=1}^{\infty}(1 - a q^{i-1}).
\end{equation*}
For the definition of the infinite $q$-Pochhammer symbol we assume $|q| < 1$.

The unilateral basic hypergeometric series $_{k+1}\phi_{k}$ is defined via
\begin{equation}\label{q_hyp_defn}
_{k+1}\phi_{k} \left[\begin{array}{ccc} a_{1} & \ldots & a_{k+1} \\ b_{1} & \ldots & b_{k} \end{array}; q, z \right] = \sum_{n=0}^{\infty}
\frac{(a_{1}, \ldots, a_{k+1}; q)_{n}}{(b_{1}, \ldots, b_{k}, q; q)_{n}} z^{n},
\end{equation}
where $(c_{1}, \ldots, c_{m}; q)_{n} = (c_1;q)_n\ldots (c_m;q)_n $.
If one of $a_{j}$ is $q^{-y}$ for a positive integer $y$, then this series is
terminating. Otherwise we assume  $|q|, |z| < 1$ for the sum to be convergent.

In \Cref{sub:phi_distr,sub:q_hyp} below we describe
two families of probability distributions
with weights given in terms of $q$-Pochhammer symbols.
Their normalization constants are computed by
applying $q$-summation identities.

\subsection{$q$-beta-binomial distribution}
\label{sub:phi_distr}

For integers $0 \leq s \leq y$ define
\begin{equation}\label{phi_qmunu_definition}
	\varphi_{q,\mu,\nu}(s\mid y):=\mu^{s}\frac{(\nu/\mu;q)_{s}(\mu;q)_{y-s}}{(\nu;q)_{y}}\frac{(q;q)_{y}}{(q;q)_{s}(q;q)_{y-s}}.
\end{equation}
\begin{lemma}
	\label{lemma:phi_sum_to_one}
	For any nonnegative integer $y$ we have
	\begin{equation} \label{phi_qmunu_sum}
		\sum_{s=0}^{y} \varphi_{q,\mu,\nu}(s\mid y)=1.
	\end{equation}
\end{lemma}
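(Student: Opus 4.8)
The plan is to identify $\sum_{s=0}^{y}\varphi_{q,\mu,\nu}(s\mid y)$ with a terminating ${}_{2}\phi_{1}$ series evaluated at argument $q$, and then evaluate it by the $q$-Chu--Vandermonde summation formula \cite[(II.6)]{GasperRahman}.

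First I would bring the summand \eqref{phi_qmunu_definition} into $q$-hypergeometric form. Applying the $q$-Pochhammer reflection identity
\[
	(a;q)_{y-s}=\frac{(a;q)_{y}}{(q^{1-y}/a;q)_{s}}\,(-q/a)^{s}\,q^{\binom{s}{2}-ys}
\]
to the two factors $(\mu;q)_{y-s}$ and $(q;q)_{y-s}$, all the powers of $q$ and all the sign factors cancel, leaving
\[
	\sum_{s=0}^{y}\varphi_{q,\mu,\nu}(s\mid y)
	=\frac{(\mu;q)_{y}}{(\nu;q)_{y}}\sum_{s=0}^{y}\frac{(q^{-y};q)_{s}\,(\nu/\mu;q)_{s}}{(q^{1-y}/\mu;q)_{s}\,(q;q)_{s}}\,q^{s}
	=\frac{(\mu;q)_{y}}{(\nu;q)_{y}}\;{}_{2}\phi_{1}\!\left(q^{-y},\,\nu/\mu;\;q^{1-y}/\mu;\;q,q\right).
\]
Then I would invoke the $q$-Chu--Vandermonde formula ${}_{2}\phi_{1}(q^{-n},b;c;q,q)=\dfrac{(c/b;q)_{n}}{(c;q)_{n}}\,b^{n}$ with $n=y$, $b=\nu/\mu$, $c=q^{1-y}/\mu$ (so that $c/b=q^{1-y}/\nu$), which evaluates the sum to $\dfrac{(q^{1-y}/\nu;q)_{y}}{(q^{1-y}/\mu;q)_{y}}\,(\nu/\mu)^{y}$. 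Finally, using $(q^{1-y}/a;q)_{y}=(-1/a)^{y}\,q^{-\binom{y}{2}}(a;q)_{y}$ for $a=\mu$ and $a=\nu$, this last ratio collapses to $\dfrac{(\nu;q)_{y}}{(\mu;q)_{y}}$, which cancels the prefactor $\dfrac{(\mu;q)_{y}}{(\nu;q)_{y}}$ and yields $1$, as claimed. (Both sides are rational in $\mu,\nu$, so the identity holds even when $q^{1-y}/\mu$ happens to be an integer power of $q$.)

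The only delicate point is the careful bookkeeping of signs and $q$-powers in the reflection formulas; there is no substantive obstacle. An entirely self-contained alternative would be induction on $y$: with $\binom{y}{s}_{q}:=\frac{(q;q)_{y}}{(q;q)_{s}(q;q)_{y-s}}$ one rewrites the claim as $\sum_{s=0}^{y}\binom{y}{s}_{q}\mu^{s}(\nu/\mu;q)_{s}(\mu;q)_{y-s}=(\nu;q)_{y}$, and the $q$-Pascal rule $\binom{y}{s}_{q}=\binom{y-1}{s-1}_{q}+q^{s}\binom{y-1}{s}_{q}$ together with $(\mu;q)_{y-s}=(1-\mu q^{y-s-1})(\mu;q)_{y-s-1}$ reduces the case $y$ to the case $y-1$ after a short rearrangement. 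I would present the $q$-Chu--Vandermonde route, since it is shorter and the paper already relies on \cite{GasperRahman} for Watson's and Heine's transformations.
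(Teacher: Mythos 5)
Your proof is correct and is essentially the paper's own argument: the paper likewise reduces the finite sum to a classical basic hypergeometric evaluation, applying Heine's $q$-Gauss summation \cite[(II.8)]{GasperRahman} with $a=q^{-y}$, $b=\mu/\nu$, $c=\mu q^{1-y}$ so that the series terminates --- which is precisely the $q$-Chu--Vandermonde evaluation you invoke, only written at base $q^{-1}$ instead of $q$. Your reflection-identity bookkeeping and the rationality remark covering the degenerate case $q^{1-y}/\mu\in q^{\mathbb{Z}}$ are both sound, so the proposal stands as a valid (and equivalent) proof.
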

Identity \eqref{phi_qmunu_sum} first appeared in the context of interacting particle systems in \cite{Povolotsky2013}.
\begin{proof}[Proof of \Cref{lemma:phi_sum_to_one}]
	Use Heine's $q$-generalization of Gauss' summation formula \cite[(II.8)]{GasperRahman},
	\begin{equation} \label{q-Gauss}
		_{2}\phi_{1}
		\left[
			\begin{array}{cc} a & b \\ \multicolumn{2}{c}{c} \end{array};
			q, c/ab
		\right] = \frac{(c/a; q)_{\infty}(c/b; q)_{\infty}}{(c; q)_{\infty}(c/ab; q)_{\infty}}.
	\end{equation}
	Take $a = q^{-y}$, $b= \mu/\nu$, $c=\mu q^{1-y}$
	in \eqref{q-Gauss}. This makes the $_2\phi_1$ function terminating, and the
	resulting finite summation identity is simply
	\eqref{phi_qmunu_sum} with $q$ replaced by $q^{-1}$.
\end{proof}

Therefore, for all values of the parameters $(q,\mu,\nu)$ for which
$\varphi_{q,\mu,\nu}(s\mid y)$ is well-defined and nonnegative for every $0 \leq s
\leq y$, \eqref{phi_qmunu_definition} is a probability distribution on
$\{0, 1, \ldots, y\}$.
One such family of parameters is $0\le q<1$, $0 \le\mu <1$, $\nu \le \mu$.
Another choice  leading to a probability distribution is $q > 1$, $\mu =
q^{-m}$, $\nu = q^{-n}$ for nonnegative integers $m$, $n$ with $m \le n$, $y
\le n$.

We can also take $y \to \infty$ to get the function
\begin{equation}
	\label{phi_qmunu_infinity_definition}
	\varphi_{q,\mu,\nu}(s\mid \infty):=\mu^{s}\frac{(\nu/\mu;q)_{s}}{(q;q)_{s}}\frac{(\mu;q)_{\infty}}{(\nu; q)_{\infty}},
\end{equation}
which for appropriate values of parameters is a probability distribution on $\mathbb{Z}_{\ge0}$.

The distribution $\varphi_{q,\mu,\nu}$
appears (under a simple change of parameters,
see
\cite[Section 5.2]{BCPS2014}
for details)
as the orthogonality weight of the classical
$q$-Hahn orthogonal polynomials \cite[Section 3.6]{Koekoek1996}.
It is also related to a very natural $q$-deformation
of the Polya urn scheme \cite{Gnedin2009}.
As such, we call $\varphi_{q,\mu,\nu}$ the
\emph{$q$-beta-binomial distribution}.

By taking $q=e^{-\eps},
\mu=e^{-\alpha\eps}, \nu=e^{-(\alpha+\beta)\eps}$ and letting
$\eps\to 0+$, we see that $\varphi_{q,\mu,\nu}(s\mid y)$ converges to
\begin{align*}
	\frac{\Gamma(\alpha+y-s)\Gamma(\beta+s)\Gamma(\alpha+\beta)\Gamma(y+1)}{\Gamma{(\alpha)}\Gamma(\beta)\Gamma(\alpha+\beta+y)\Gamma(s+1)\Gamma(y-s+1)},
\end{align*}
which is the probability of $s$ under the beta-binomial distribution with parameters $y, \alpha, \beta$.
The beta-binomial distribution is
the orthogonality weight for the Hahn
orthogonal polynomials \cite[Section 1.5]{Koekoek1996}, and also arises from the ordinary
Polya urn scheme.

Another property of the $q$-beta-binomial distribution which we need is
the following symmetry:
\begin{lemma}
	\label{lemma:phi_symmetry}
	For any nonnegative integers $x$ and $y$ we have
	\begin{equation}
		\label{phi_qmunu_interchange}
		\sum_{s=0}^{y}\varphi_{q,\mu,\nu}(s\mid y)\,q^{sx}
		=
		\sum_{t=0}^{x}\varphi_{q,\mu,\nu}(t\mid x)\,q^{ty}.
	\end{equation}
\end{lemma}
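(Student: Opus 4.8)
The content of \eqref{phi_qmunu_interchange} is that the function $H(x,y):=\sum_{s=0}^{y}\varphi_{q,\mu,\nu}(s\mid y)\,q^{sx}$ is symmetric in the nonnegative integers $x,y$. The plan is to prove $H(x,y)=H(y,x)$ by induction on $y$, simultaneously for all $x\ge0$. The base cases $y=0$ and $x=0$ are immediate: in both, $H$ equals $1$, using $\varphi_{q,\mu,\nu}(0\mid 0)=1$ together with the normalization $\sum_{s}\varphi_{q,\mu,\nu}(s\mid y)=1$ of \Cref{lemma:phi_sum_to_one}. (For orientation: expanding \eqref{phi_qmunu_definition} one may write $H(x,y)=\tfrac{(\mu;q)_y}{(\nu;q)_y}\,{}_2\phi_1\!\left(\nu/\mu,\,q^{-y};\,q^{1-y}/\mu;\,q,\,q^{x+1}\right)$, a $_2\phi_1$ that terminates because of the numerator parameter $q^{-y}$, so the assertion is a transformation identity for this terminating series.)

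For the inductive step fix $x,y\ge1$. The first ingredient is a three-term recursion in the parameter $y$, obtained directly from \eqref{phi_qmunu_definition} by simplifying the ratios $\varphi_{q,\mu,\nu}(s\mid y-1)/\varphi_{q,\mu,\nu}(s\mid y)$ and $\varphi_{q,\mu,\nu}(s-1\mid y-1)/\varphi_{q,\mu,\nu}(s\mid y)$:
\begin{equation*}
	(1-\nu q^{y-1})\,\varphi_{q,\mu,\nu}(s\mid y)
	=(1-\mu q^{y-1-s})\,\varphi_{q,\mu,\nu}(s\mid y-1)
	+q^{y-s}(\mu-\nu q^{s-1})\,\varphi_{q,\mu,\nu}(s-1\mid y-1).
\end{equation*}
Multiplying by $q^{sx}$ and summing over $s$ (after shifting the index in the second piece, the two sums collapse into $H(x,y-1)$ and $H(x-1,y-1)$) yields the difference equation
\begin{equation*}
	(1-\nu q^{y-1})\,H(x,y)=(1-\nu q^{x+y-1})\,H(x,y-1)-\mu q^{y-1}(1-q^{x})\,H(x-1,y-1),
\end{equation*}
into which we substitute $H(x,y-1)=H(y-1,x)$ and $H(x-1,y-1)=H(y-1,x-1)$ by the induction hypothesis.

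It then suffices to check that $H(y,x)=\sum_{t=0}^{x}\varphi_{q,\mu,\nu}(t\mid x)\,q^{ty}$ satisfies the \emph{same} difference equation, now with the \emph{first} argument lowered, i.e.
\begin{equation*}
	(1-\nu q^{y-1})\,H(y,x)=(1-\nu q^{x+y-1})\,H(y-1,x)-\mu q^{y-1}(1-q^{x})\,H(y-1,x-1);
\end{equation*}
its right-hand side is then identical to the one in the previous display, whence $H(x,y)=H(y,x)$. Clearing the factor $1-\nu q^{y-1}$ and comparing coefficients of powers of $q^{y-1}$ reduces this to the single termwise identity
\begin{equation*}
	\varphi_{q,\mu,\nu}(t\mid x)\,(q^{t}-1)-\nu\,\varphi_{q,\mu,\nu}(t-1\mid x)\,(q^{t-1}-q^{x})
	=-\mu(1-q^{x})\,\varphi_{q,\mu,\nu}(t-1\mid x-1),
\end{equation*}
which is verified directly from \eqref{phi_qmunu_definition} by dividing through by $\varphi_{q,\mu,\nu}(t-1\mid x-1)$ and collapsing the two elementary $q$-Pochhammer ratios. (Equivalently, one may bypass the induction and argue by uniqueness: $H(x,y)$ and $H(y,x)$ both solve the displayed difference equation with the same boundary data $H(x,0)=H(0,y)=1$ on the two axes, and this data determines the solution.)

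The only genuine computation is in the two displayed $q$-Pochhammer identities — the $y$-recursion for $\varphi_{q,\mu,\nu}$ and the termwise identity — plus the routine verification that the extreme values of the summation index ($t=0$, $t=x$, and the excluded corner $x=0$) cause no trouble. The conceptual point, and the reason a single Heine-type transformation does not close the matter, is that $H$ obeys one and the same difference equation in each of its two arguments; the naive transformations that would interchange $x$ and $y$ run into $0\cdot\infty$ indeterminacies produced by the terminating parameter $q^{-y}$, so the symmetry must be bootstrapped through the induction above.
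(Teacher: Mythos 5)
Your proof is correct, but it is a genuinely different route from the paper's: the paper does not prove \Cref{lemma:phi_symmetry} at all, it simply cites \cite[Proposition 1.2]{Corwin2014qmunu} (see also \cite{Barraquand_qhahn_2014}), whereas you supply a self-contained elementary argument. I checked the two computational pillars and they hold: the contiguous relation $(1-\nu q^{y-1})\varphi_{q,\mu,\nu}(s\mid y)=(1-\mu q^{y-1-s})\varphi_{q,\mu,\nu}(s\mid y-1)+q^{y-s}(\mu-\nu q^{s-1})\varphi_{q,\mu,\nu}(s-1\mid y-1)$ follows from \eqref{phi_qmunu_definition} by the ratio computation you indicate (including the boundary terms $s=0,y$, with the convention $\varphi(-1\mid\cdot)=\varphi(y\mid y-1)=0$), and after multiplying by $q^{sx}$ and summing it does collapse to $(1-\nu q^{y-1})H(x,y)=(1-\nu q^{x+y-1})H(x,y-1)-\mu q^{y-1}(1-q^x)H(x-1,y-1)$; likewise, writing $w=q^{y-1}$ and matching coefficients of $w^t$, the companion equation for $H(y,x)$ reduces exactly to your termwise identity, which indeed simplifies to $-\mu(1-q^x)\varphi_{q,\mu,\nu}(t-1\mid x-1)$ (the bracket collapses to $\mu(1-\nu q^{x-1})$, cancelling the $(\nu;q)_x/(\nu;q)_{x-1}$ factor). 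Two small remarks: the step from "the identity holds for all integers $y\ge1$" to a polynomial identity in $w$ is only used in the easy direction (coefficients equal $\Rightarrow$ identity for all $y$), so no issue there; and the final division by $1-\nu q^{y-1}$ is harmless since both sides of \eqref{phi_qmunu_interchange} are rational in $\nu$ and $\varphi(\cdot\mid y)$ is itself undefined when $\nu q^{y-1}=1$, so it suffices to argue for generic $\nu$ — worth one sentence if you write this up. What your approach buys is independence from the literature and from any $_2\phi_1$ transformation theory: everything is a finite $q$-Pochhammer manipulation plus the normalization \eqref{phi_qmunu_sum} for the base case, in the same "difference equation plus boundary data determines the solution" spirit that the paper itself uses later for the moment formulas.
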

\begin{proof}
	This is \cite[Proposition 1.2]{Corwin2014qmunu}, see also \cite{Barraquand_qhahn_2014}.
\end{proof}

For $x=0$ identity \eqref{phi_qmunu_interchange}
reduces to \eqref{phi_qmunu_sum}.

\subsection{$q$-hypergeometric distribution}
\label{sub:q_hyp}

For generic values of $a,b,c$
such that $a,b<1$, $c,q\in(0,1)$ and $\frac{c}{ab}\in(0,1)$,
the individual terms in the summation identity
\eqref{q-Gauss} are all nonnegative. Therefore, this identity
defines a probability distribution
\begin{equation}
	\label{q_hyp_distribution_definition}
	\psi_{q,a,b,c}(p):=
	\left(\frac{c}{ab} \right)^{p} \frac{(a; q)_{p}(b; q)_{p}}{(q;q)_{p}(c; q)_{p}}
	\frac{(c; q)_{\infty}(c/ab; q)_{\infty}}{(c/a; q)_{\infty}(c/b; q)_{\infty}}
\end{equation}
on the set of all nonnegative integers $p$.
We call it the \emph{$q$-hypergeometric distribution}
by analogy with the classical hypergeometric distribution whose
probability generating function $\sum_{p=0}^{\infty}z^p \mathop{\mathrm{Prob}}(X=p)$
is the Gauss hypergeometric function $_2F_1$.

\subsection{Distributions for the $q\to1$ beta limit}
\label{sub:2F1_distributions}

In \Cref{sec:beta_limit} we use several distributions
which we define here. Let the negative binomial distribution be
\begin{equation}
	\label{neg_binom_def}
	\mathscr{NB}(r,p)[k]
	=(1-p)^r\,\frac{p^k(r)_k}{k!},\qquad k=0,1,2,\ldots ;
	\qquad
	r\ge0,\quad 0\le p<1,
\end{equation}
where $(r)_k=r(r+1)\ldots(r+k-1)$ is the ordinary Pochhammer symbol.
Here
$\mathscr{NB}(r,p)[k]$ (and similar expressions below) stands for the probability weight of $k$
(or the probability density function in the absolutely continuous case),
and
$\mathscr{NB}(r,p)$ is the corresponding
random variable.
The generalized beta distribution of the first kind has the density
\begin{equation}
	\label{gen_beta_def}
	\mathscr{B}_1(c,m,n)[x]=
	\frac{(1-c)^m \Gamma(m+n)}{\Gamma(m)\Gamma(n)}\,
	\frac{x^{m-1}(1-x)^{n-1}}{(1-c x)^{m+n}},
	\qquad
	0<x<1,
\end{equation}
where $m,n>0$ and $c<1$. A special case of this distribution is the standard beta, denoted by $\mathscr{Beta}(m,n)$, which occurs when $c=0$. If $X$ is $\mathscr{Beta}(m,n)$-distributed, then we say that $X^{-1}$ is $\mathscr{Beta}^{-1}(m,n)$-distributed. (Note that this does not mean that the density of $X^{-1}$ is the inverse of the density of $X$.)

Combine the distributions \eqref{neg_binom_def} and \eqref{gen_beta_def}
and define the continuous distribution $\mathscr{NBB}_1(r,p,c,m,n)$ on $(0,1)$
as $\mathscr{B}_1(c,m,n+k)$, with $k\sim \mathscr{NB}(r,p)$.
That is, $\mathscr{NBB}_1$ has the density
\begin{equation}
	\label{nbb1_distribution_definition}
	\mathscr{NBB}_1(r,p,c,m,n)[x]
	=
	( 1 - p ) ^ { r }\,
	\frac { ( 1 - c ) ^ { m } \Gamma ( m + n ) } { \Gamma ( m ) \Gamma ( n ) }
	\,
	\frac { x ^ { m - 1 } ( 1 - x ) ^ { n - 1 } } { ( 1 - c x ) ^ { m + n } }
	\,
	{}_2F_1 \left( r , m + n ; n ; \frac { p ( 1 - x ) } { 1 - c x } \right),
\end{equation}
where ${}_2F_1$ is
the ordinary Gauss hypergeometric function
\begin{equation*}
	_2F_1(a,b;c;z)=\sum_{k=0}^{\infty}\frac{(a)_k(b)_k}{(c)_k}\frac{z^k}{k!}.
\end{equation*}
When $c=0$, this distribution reduces to the negative binomial beta
distribution which we denote by $\mathscr{NBBeta}(r,p,m,n)$. If $X$ is
$\mathscr{NBBeta}(r,p,m,n)$-distributed, then we say that $X^{-1}$ is
$\mathscr{NBBeta}^{-1}(r,p,m,n)$-distributed.

The next lemma shows that via a $c$-dependent linear fractional transform, these random variables can be made independent of $c$.
\begin{lemma}\label{lem:changeofvar}
If $X$ is distributed as $\mathscr{B}_1(c,m,n)$ (i.e., a generalized beta random variable with density on $[0,1]$ given by \eqref{gen_beta_def}), then $Y =\frac{X-c X}{1-c X}$ is $\mathscr{Beta}(m,n)$-distributed. Likewise if $W$ is distributed as $\mathscr{NBB}_1(r,p,c,m,n)$ (i.e., a random variable with density on $[0,1]$ given by \eqref{nbb1_distribution_definition}), then $V =\frac{W-c W}{1-c W}$ is $\mathscr{NBBeta}(r,p,m,n)$-distributed.
\end{lemma}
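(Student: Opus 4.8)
The plan is to treat this as a direct one-dimensional change of variables, since in both parts the relevant map $x\mapsto y=\tfrac{(1-c)x}{1-cx}$ is the same linear fractional (Möbius) transformation depending on $c$. First I would record its elementary properties: for $c<1$ one has $1-cx>0$ on $(0,1)$ (checking the cases $c\le 0$ and $0<c<1$ separately), the map sends $0\mapsto 0$ and $1\mapsto 1$, and $\tfrac{dy}{dx}=\tfrac{1-c}{(1-cx)^2}>0$, so it is an increasing bijection of $(0,1)$ onto itself. Inverting gives $x=\tfrac{y}{1-c+cy}$ with Jacobian $\tfrac{dx}{dy}=\tfrac{1-c}{(1-c+cy)^2}$.

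The computation collapses because of the identities
\begin{equation*}
	1-x=\frac{(1-c)(1-y)}{1-c+cy},\qquad 1-cx=\frac{1-c}{1-c+cy},\qquad\text{so that}\qquad \frac{1-x}{1-cx}=1-y .
\end{equation*}
For the first statement I would substitute $x=\tfrac{y}{1-c+cy}$ and these identities into \eqref{gen_beta_def}, multiply by $|dx/dy|$, and observe that every power of $(1-c)$ and of $(1-c+cy)$ cancels, leaving exactly $\tfrac{\Gamma(m+n)}{\Gamma(m)\Gamma(n)}\,y^{m-1}(1-y)^{n-1}$ on $(0,1)$, which is the $\mathscr{Beta}(m,n)$ density; hence $Y$ is $\mathscr{Beta}(m,n)$-distributed.

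For the second statement the ``generalized beta'' part of the density \eqref{nbb1_distribution_definition} transforms in exactly the same way; the only extra ingredient is the factor ${}_2F_1\!\bigl(r,m+n;n;\tfrac{p(1-x)}{1-cx}\bigr)$, and by the last of the displayed identities its argument becomes simply $p(1-y)$, while the prefactor $(1-p)^r$ is untouched. Thus after the change of variables the density of $V$ equals $(1-p)^r\,\tfrac{\Gamma(m+n)}{\Gamma(m)\Gamma(n)}\,y^{m-1}(1-y)^{n-1}\,{}_2F_1\!\bigl(r,m+n;n;p(1-y)\bigr)$, which is precisely \eqref{nbb1_distribution_definition} specialized to $c=0$, i.e.\ the $\mathscr{NBBeta}(r,p,m,n)$ density. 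I do not expect a genuine obstacle here: the argument is purely mechanical, and the only place requiring care is the bookkeeping of the exponents of $(1-c)$ and $(1-c+cy)$ when verifying that the algebraic prefactors cancel against the Jacobian.
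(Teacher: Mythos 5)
Your proposal is correct and follows exactly the route the paper intends: its proof of \Cref{lem:changeofvar} is precisely ``a simple change of variables applied to the densities,'' which you carry out explicitly (Jacobian, the identities $1-cx=\tfrac{1-c}{1-c+cy}$ and $\tfrac{1-x}{1-cx}=1-y$, and the cancellation of the $(1-c)$ and $(1-c+cy)$ powers). The bookkeeping in both the $\mathscr{B}_1$ and $\mathscr{NBB}_1$ cases checks out, so nothing is missing.
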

\begin{proof}
This follows from a simple change of variables applied to the densities.
\end{proof}

\printbibliography

\bigskip

\textsc{I. Corwin, Columbia University, Department of Mathematics, 2990 Broadway, New York, NY 10027, USA}

E-mail: \texttt{ivan.corwin@gmail.com}

\bigskip

\textsc{K. Matveev, Department of Mathematics, Brandeis University, 415 South Street, Waltham, MA, USA}

E-mail: \texttt{kosmatveev@gmail.com}

\bigskip

\textsc{L. Petrov, University of Virginia, Department of Mathematics, 141 Cabell Drive, Kerchof Hall, P.O. Box 400137, Charlottesville, VA 22904, USA, and Institute for Information Transmission Problems, Bolshoy Karetny per. 19, Moscow, 127994, Russia}

E-mail: \texttt{lenia.petrov@gmail.com}

\end{document}